\newcommand{\RNum}[1]{\uppercase\expandafter{\romannumeral #1\relax}}
\DeclareMathOperator{\id}{Id}
\DeclareMathOperator{\Var}{Var}
\DeclareMathOperator{\Cov}{Cov}
\numberwithin{equation}{section}
\newtheorem{theorem}{Theorem}[section]
\newtheorem{corollary}[theorem]{Corollary}
\newtheorem{lemma}[theorem]{Lemma}
\newtheorem{definition}[theorem]{Definition}
\newtheorem{remark}[theorem]{Remark}
\newtheorem{notation}[theorem]{Notation}
\begin{document}

\title
{Vector-valued Almost Sure Invariance Principles For (Non)stationary And Random Dynamical Systems}

%%%%%%%%%%%%%%%%%%%%%%%%%%%%%%%%
\author{Yaofeng Su\thanks{Department of Mathematics, University of Houston,
    Houston, TX 77204-3008, USA. \texttt{yfsu@math.uh.edu;yaofeng.su@math.gatech.edu}}}

% \address{Department of Mathematics, University of Houston, Houston, TX 77204-3008, USA.}
% \email{yfsu@math.uh.edu}

\date{\today}

% \keywords{Almost Surely invariance principle, non-stationary dynamical system, Pomeau-Manneville maps, transfer operator. }
% \subjclass[2010]{37C30, 37D25, 60F17}

\maketitle

\begin{abstract}

We study the limit behavior of (non)stationary and random chaotic dynamical systems. Several  (vector-valued) almost sure invariance principles for (non)stationary dynamical systems and quenched (vector-valued) almost sure invariance principles for random dynamical systems are proved. We also apply our results to stationary chaotic dynamical systems, which admit Young towers, and to  (non)uniformly expanding non-stationary and random dynamical systems with intermittencies or uniform spectral gaps. It implies that the systems under study tend to a  Brownian motion under various scalings.
\end{abstract}

\tableofcontents

%
%%%%%%%%%%%%%%%%%%%%%%%%%%%%%%%%%%%%%%%%%%%%%%%%%%
\section{Introduction}
 %%%%%%%%%%%%%%%%%%%%%%%%%%%%%%%%%%%%%%%%%%%%%%%%%%%%%%%%%%%%%%%%%
 
 The paper deals with strong statistical properties of (non)stationary and random dynamical systems. Such problems naturally arise e.g. in a framework of the non-equilibrium statistical physics. A non-stationary dynamical system is generated by consecutive applications of maps $T_k \circ T_{k-1} \circ \cdots \circ T_1$ acting on a phase space $X$, where the maps $T_i: X \to X$ are allowed to vary with $i$. If $T_k=T_1$ for all $k \ge 1$, then we get a stationary dynamical system. In difference, in a random dynamical system the maps $T_{\omega}$  are picked from a probability space $(\Omega,\mathbb{P})$ in accordance with the probability distribution $\mathbb{P}$. In this case, trajectories in $X$ are formed by a sequence of concatenation of maps $T_{\sigma^{n}\omega}\circ \cdots \circ T_{\omega}$ where $\sigma$ is a shift of $\Omega$. As time evolves, $\sigma$ updates the current
configuration and the dynamics $T_{\omega}$ on $X$. Various statistical properties of non-stationary and random dynamical systems were studied in  \cite{BB, ott, HNTV, CR, NTV, DFGV}.
In this paper we consider a more refined property, called a vector-valued almost sure invariance principle (VASIP), for a variety of (non)stationary and random dynamical systems. 

Suppose $(X_k)_{k \ge 1}$ is a sequence of zero-mean random vectors. We say  $(X_k)_{k \ge 1}$ satisfies the VASIP if there exists a sequence of independent zero-mean Gaussian random vectors $ (G_k)_{k \ge 1}$ (perhaps on an extended probability space) such that the difference between
$\sum_{k\le n} X_k$ and $\sum_{k \le n} G_k$ is negligible in comparison to $[\Var (\sum_{k\le n} X_k )]^{\frac{1}{2}}$. Here  $\lim_{n\to \infty}\Var (\sum_{k\le n} X_k )=\infty$.

One of our main results is a purely probabilistic Theorem \ref{thm3} which proves the VASIP for $(X_k)_{k \ge 1}$ adapted to a decreasing filtration, in case when the growth rate of $\Var(\sum_{k\le n} X_k )$ could be sufficiently fast and polynomial. 

We apply our Theorem \ref{thm3} to prove the VASIP for non-stationary dynamical systems (Theorem \ref{thm}) and random dynamical systems (Theorem \ref{thm2}).
 
 As applications (section \ref{sec3}), we apply our Theorems \ref{thm} and \ref{thm2}

\begin{enumerate}
    \item For a large class of non-stationary dynamical systems which were studied in  \cite{CR, HNTV, NTV, HS, OH, NPT, Su}. 
    \item And for random dynamical systems considered in \cite {DFGV,DFGVS,NTV, NPT}.
    \item And for stationary Young towers in \cite{MN1,MN2, Y}.
\end{enumerate}   

Also our results on the VASIP improve the ones obtained in \cite{Su, HNTV, DFGV, MN1, MN2, G}. Namely,
\begin{enumerate}
    
    \item The technique used in \cite{Su, HNTV, DFGV} only works for random variables $X_k$  while ours works for random vectors $X_k$.
    \item The papers \cite{G, HNTV, DFGV} require the system to have a very strong hyperbolicity. They are just corollaries of our results.
    \item The papers \cite{MN1, MN2, G} dealt with different types of stationary systems (strong or weak hyperbolicity) using different techniques. They are just corollaries of our results.
    \item The systems in \cite{Su, HNTV, DFGV} have to assume a fast decay rate of $\lim_{n\to \infty}\sup_{k}\Cov(X_k,X_{k+n})=0$ (also called a decay of correlation), while we require a slower rate only. In particular, \cite{Su} considered the composition of a small set of Pomeau-Manneville type maps, obtained by perturbing the slope at the indifferent fixed point 0, while the results of the present paper hold for a larger (and actually optimal) family of such maps.

Some weaker than VASIP results for the same class as the one studied in \cite{Su} were obtained in \cite{NTV, HS,OH, NPT}. 

    \item In the random setting, \cite{NTV, HS,OH, NPT} considered the random composition of the same family of Pomeau-Manneville type maps and required the shift $\sigma$ on $\Omega$ to be Bernoulli, rather than our assumption that $\sigma$ is just ergodic.
\end{enumerate}

The structure of the paper is the following one. In the next section  \ref{sec2} we introduce the necessary notations and formulate the main Theorems \ref{thm} and \ref{thm2}. The proofs of these theorems can be found in sections \ref{firstthmproof} and \ref{secondthmproof}. In section \ref{sec3} some corollaries and applications of our Theorems \ref{thm} and \ref{thm2} are considered. Section \ref{pureprobthm} deals with a proof of a (purely probabilistic) Theorem \ref{thm3}. In section \ref{firstthmproof} and \ref{secondthmproof} this Theorem \ref{thm3} is applied to prove the main Theorems \ref{thm} and \ref{thm2}. Section \ref{sec6} contains the proofs of corollaries formulated in the section \ref{sec3}. Computations of the parameters considered in Theorems \ref{thm} and \ref{thm3} are given in Appendix (section \ref{sec7}).

\section{Definitions, Notations, Main Theorems}\label{sec2} 

Consider a probability space $(X, \mathcal{B}, \mu)$ with $\mu$ as a reference probability and $\mathcal{B}$ as a $\sigma$-algebra of $X$, a map (also called a dynamics) $T:X \to X$ is called non-singular if 

\[\mu(A)=0 \iff \mu(T^{-1}A)=0 \text{ for all } A\in \mathcal{B}.\]

Let $(\Omega, \mathcal{F},\mathbb{P}) $ be another probability space with a probability $\mathbb{P}$ and a $\sigma$-algebra  $\mathcal{F}$. In this paper, we consider the following (non)stationary and random dynamical systems only. 

\begin{definition}[Non-stationary, stationary and random dynamical systems]\label{nonrandomdef}\ \par
 $(X, \mathcal{B}, (T_k)_{k \ge 1}, \mu)$ is called a non-stationary dynamical system if $(T_k)_{k \ge 1}$ are non-singular maps on $X$.
 
 In contrast, a stationary dynamical system means that $T_k=T_1$ for all $k \ge 1$ and $(T_1)_{*}\mu:=\mu \circ T_1^{-1}=\mu$.
 
$(X,\mathcal{B}, (T_{\omega})_{\omega \in \Omega}, \mu, \Omega,\mathcal{F}, \mathbb{P}, \sigma, (\mu_{\omega})_{\omega \in \Omega})$ is called a random dynamical system if

\begin{enumerate}
    \item $\sigma: \Omega \to \Omega$ is an invertible, ergodic probability-preserving map on $(\Omega,\mathbb{P})$.
    \item The probability $\mu_{\omega}$ is absolutely continuous w.r.t. $\mu$.
    \item $(T_{\omega})_{\omega \in \Omega}$ are non-singular maps on $X$ with respect to (w.r.t.) $\mu$.
    \item $(T_{\omega})_{*}\mu_{\omega}=\mu_{\sigma \omega}$ a.e. $\omega \in \Omega$.
\end{enumerate}  
$(\mu_{\omega})_{\omega \in \Omega}$ and $(h_{\omega})_{\omega \in \Omega}$ are called quasi-invariant probabilities and quasi-invariant densities respectively.
\end{definition}

For any $ n, m, k \in \mathbb{N}$, we denote:
\[T^{n+m}_m :=T_{n+m} \circ T_{n+m-1} \circ \dots\circ T_m, \text{ } T_m^{m-1}:=\id,\]
\[T^n :=T_1^{n}=T_n \circ T_{n-1} \circ \dots\circ T_1, \text{ } T^0:=\id,\]
\[T_{\omega}^k:=T_{\sigma^{k-1} \omega} \circ T_{\sigma^{k-2} \omega} \circ \cdots \circ T_{\omega},\text{ } T^0_{\omega}:=\id.\]

The transfer operators (Perron-Frobenius operators) $P_k$ (resp. $P_{\omega}$) associated to $T_k$ (resp. $T_{\omega}$) are defined by the duality relations:
\[\int g \cdot P_{k} (f) d\mu= \int g \circ T_{k} \cdot f d\mu \text{ for all } f \in L^1(\mu), g \in L^{\infty}(\mu),\]
\begin{equation}\label{21}
    \int g \cdot P_{\omega} (f) d\mu= \int g \circ T_{\omega} \cdot f d\mu \text{ for all } f \in L^1(\mu), g \in L^{\infty}(\mu).
\end{equation}

Observe that (\ref{21}) implies \[P_{\omega}h_{\omega}=h_{\sigma \omega}  \text{ in } L^1(\mu)  \text{ for a.e. } \omega \in \Omega.\]

Similar to $ T_m^{n+m}, T^n$ and $T^k_{\omega}$, we denote:
\[P^{n+m}_m :=P_{n+m} \circ P_{n+m-1} \circ \dots\circ P_m,\text{ } P^{m-1}_m:=\id,\]
\[P^n :=P_1^{n}=P_n \circ P_{n-1} \circ \dots\circ P_1,\text{ } P^0:=\id,\]
\[ P^{k}_{\omega} :=P_{\sigma^{k-1}\omega}\circ P_{\sigma^{k-2}\omega}\circ \cdots \circ P_{\omega }, \text{ } P_{\omega}^0:=\id.\] 

\begin{notation}\ \par
\begin{enumerate}
\item $ C_a$ denotes a constant that depends only on $a$.
\item $a_n \approx_w b_n$ (resp. “$a_n \precsim_w b_n$”) means that there is a constant $C_w \ge 1$ such that $ C_w^{-1} \cdot b_n \le a_n \le C_w \cdot b_n$ for all $n \in \mathbb{N}$ (resp. $a_n \le C_w \cdot b_n $ for all $n \in \mathbb{N}$). 
\item \textbf{1} denotes the constant function $\textbf{1}$ on $X$.
\item For any $m \in \mathbb{N} $, scalar function $f$ and $L^1$-matrix $\big[ f_{ij} \big] $ (i.e. $f_{ij} \in L^1(X)$ for all $i,j \ge 1$), define:
\[f\cdot P_m (\big[ f_{ij} \big])=P_m (\big[ f_{ij} \big]) \cdot f:= \big[ f \cdot P_m (f_{ij})\big],\]
\[f\cdot P_{\omega} (\big[ f_{ij} \big])=P_{\omega} (\big[ f_{ij} \big]) \cdot f:= \big[ f \cdot P_{\omega} (f_{ij})\big].\]
\end{enumerate}
\end{notation}

Next, we define the VASIP and quenched VASIP for the (non)stationary and random dynamical systems defined in the Definition \ref{nonrandomdef}.
\begin{definition}[VASIP and Quenched VASIP]\label{VASIP}\ \par
For a non-stationary dynamical system $(X, \mathcal{B}, (T_k)_{k \ge 1}, \mu)$, consider a sequence of observables $ \{\phi_k \in L^{\infty}(X,\mu; \mathbb{R}^d): k \in \mathbb{N}\}$ satisfying 
\[\sup_{k}||\phi_k||_{\infty}< \infty, \int{\phi_k \circ T^k d\mu}=0 \text{ for all } k\ge 1.\]

We denote a $d \times d$ variance matrix by 
\[\sigma_n^2 : = \int(\sum_{k=1}^{ n} \phi_k \circ T^k )\cdot(\sum_{k=1}^{n} \phi_k \circ T^k )^{T} d\mu.\] 

and the least eigenvalue of $\sigma_n^2$ by 
\[\lambda(\sigma_n^2):= \inf_{|u|=1, u \in \mathbb{R}^d}\int(u^{T} \cdot \sum_{k=1}^{n} \phi_k \circ T^k )^2 d\mu.\]

We say $ (\phi_k \circ T^k)_{k \ge 1} $ satisfies a VASIP if there exists a constant $\epsilon \in (0,1) $ and zero-mean $d$-dimensional independent Gaussian random vectors $ (G_k)_{k \ge 1}$ defined on some extended probability space of $(X, \mathcal{B},\mu)$ such that:
\begin{equation} \label{matching}
\sum_{k=1}^{n} \phi_k \circ T^k - \sum_{k =1}^{n} G_k=o(\lambda(\sigma_n^2)^{\frac{1-\epsilon}{2}}) \text{ almost surely (a.s.),}
\end{equation}
\begin{equation} \label{variancegrowth}
 \sigma^2_n=\sum_{k=1}^{n} \tilde{\mathbb{E}}({ G_k \cdot G_k^{T}})+ o(\lambda(\sigma_n^2)^{1-\epsilon}),
\end{equation}
\begin{equation}
\lambda(\sigma_n^2) \to \infty,
\end{equation}
where $\tilde{\mathbb{E}}$ in (\ref{variancegrowth}) is the expectation w.r.t. the probability $\tilde{P}$ of the extended probability space of $(X, \mathcal{B}, \mu)$. 

For a random dynamical system $(X,\mathcal{B}, (T_{\omega})_{\omega \in \Omega}, \mu, \Omega,\mathcal{F}, \mathbb{P}, \sigma, (\mu_{\omega})_{\omega \in \Omega})$, consider a sequence of observables $ \{\phi_{\omega} \in L^{\infty}(X,\mu; \mathbb{R}^d): \omega \in \Omega\}$ satisfying 
\[\sup_{\omega \in \Omega}||\phi_{\omega}||_{\infty}< \infty,\int{\phi_{\omega}  d\mu_{\omega}}=0 \text{ for any } \omega \in \Omega,\]
we denote a $d \times d$ variance matrix by 
\[\sigma_n^2(\omega):=\int(\sum_{k=1}^{n} \phi_{\sigma^k\omega} \circ T_{\omega}^k )\cdot(\sum_{k=1}^{n} \phi_{\sigma^k \omega}\circ T_{\omega}^k )^{T} d\mu_{\omega}.\] 

We say $ (\phi_{\sigma^k\omega} \circ T_{\omega}^k)_{k \ge 1, \omega \in \Omega} $ satisfies a quenched VASIP if, for a.e. $\omega \in \Omega$, there exists a constant $\epsilon \in (0,1) $ and  zero-mean $d$-dimensional independent Gaussian random vectors $ (G^{\omega}_k)_{k \ge 1}$ defined on some extended probability space of $(X, \mathcal{B},\mu_{\omega})$ such that:
\begin{equation} \label{qmatching}
\sum_{k=1}^{n} \phi_{\sigma^k\omega} \circ T_{\omega}^k - \sum_{k=1}^{n} G^{\omega}_k=o(n^{\frac{1-\epsilon}{2}}) \text{ a.s.,}
\end{equation}
\begin{equation} \label{qvariancegrowth}
 \sigma^2_n(\omega)=\sum_{k=1}^{n} \tilde{\mathbb{E}}^{\omega}[{ G^{\omega}_k \cdot (G^{\omega}_k)^{T}}]+ o(n^{1-\epsilon}),
\end{equation}
\begin{equation}\label{qlinear}
\sigma_n^2(\omega) \approx_{\omega} n \cdot I_{d \times d},
\end{equation}
where $\tilde{\mathbb{E}}^{\omega}$ in (\ref{qvariancegrowth}) is the expectation w.r.t. the probability $\tilde{P}^{\omega}$ of the extended probability space of $(X, \mathcal{B}, \mu_{\omega})$. 

\end{definition}

\begin{remark}  If $ \sigma_n^2=n \cdot \Sigma^2 + o(n^{1-\epsilon}) $ holds for a constant $\epsilon \in (0,1)$ and a positive definite $d \times d$ matrix $\Sigma^2$,  then (\ref{matching}) becomes
\[\sum_{k=1}^{n} \phi_k \circ T^k - \Sigma \cdot B_n=o(n^{\frac{1-\epsilon}{2}}) \text{ a.s.,}\]
where $B$ is a standard $d$-dimensional Brownian motion (see Lemma \ref{stationary}). Then our VASIP coincides with the standard VASIP for stationary dynamical systems proved in \cite{MN2}.

\begin{remark} If $d=1$, then $G_k$ can be embedded into a one-dimensional standard Brownian motion $(B_t)_{t \ge 0}$. Using (\ref{variancegrowth}), (\ref{matching}) becomes
\[\sum_{k=1}^{n} \phi_k \circ T^k - B_{\sigma^2_n}=o(\sigma_n^{1-\epsilon}) \text{ a.s.}\]

This implies statistical limit theorems such as the self-norming Central Limit Theorem (CLT) and the self-norming Law of the Iterated Logarithm (LIL):
\begin{equation}\label{clt}
    \lim_{n\to \infty }\int e^{-t \cdot \frac{\sum_{k=1}^{ n} \phi_k \circ T^k}{\sigma_n}}d\mu=e^{-\sfrac{t^2}{2}} \text{ for all } t \in \mathbb{R},
\end{equation}
  \[\limsup_{n\to \infty}\frac{\sum_{k=1}^{ n} \phi_k \circ T^k}{\sqrt{\sigma_n^2\log\log\sigma_n^2}}=1, \text{ } \mu\text{-a.s.}\]

Similarly, we have the quenched CLT and the quenched LIL, that is, there is a constant $\Sigma^2 > 0$ such that for a.e. $\omega \in \Omega$,
\[
  \lim_{n\to \infty }\int e^{-t \cdot \frac{\sum_{k=1}^{n} \phi_{\sigma^k \omega} \circ T_{\omega}^k}{\sqrt{\Sigma^2 \cdot n} }} d\mu_{\omega} =e^{-\sfrac{t^2}{2}} \text{ for all } t \in \mathbb{R}, 
\]
\[
  \limsup_{n\to \infty}\frac{\sum_{k=1}^{ n} \phi_{\sigma^k\omega} \circ T_{\omega}^k}{\sqrt{n\log\log n}}=\Sigma, \text{ } \mu_{\omega}\text{-a.s.}
\]
\end{remark}

\end{remark}

Now we can present our main results.

\begin{theorem}[Non-stationary Dynamical Systems]\ \label{thm} \par
Assume a non-stationary dynamical system $(X, \mathcal{B}, (T_k)_{k \ge 1}, \mu)$ and a sequence of observables $ (\phi_k)_{k \in \mathbb{N}}$ from the Definition \ref{VASIP}, they satisfy the conditions (\ref{A1})-(\ref{A3}) below: 
\begin{equation} \label{A1}
\sup_{i\ge 1 }\int |P_{i+1}^{n+i} (\phi_i \cdot P^i \textbf{1})| d\mu \precsim  n^{1-1/\alpha}, \tag{A1}
\end{equation}
\begin{equation} \label{A2}
\sup_{i \ge 1}\int |P_{i+1}^{n+i} [(\phi_i \cdot \phi_i^T - \int \phi_i \circ T^i \cdot \phi_i^T \circ T^i d\mu) \cdot P^i \textbf{1}] |  d\mu \precsim  n^{1-1/\alpha}, \tag{A2}
\end{equation}
\begin{equation} \label{A3}
\sup_{i\ge 1, j\ge 0}\int |P_{i+j+1}^{i+j+n}\{ [P^{i+j}_{i+1}(\phi_i \cdot P^i \textbf{1}) \cdot \phi_{i+j}^T - P^{i+j} \textbf{1}\cdot \int P^{i+j}_{i+1}(\phi_i \cdot P^i \textbf{1}) \cdot \phi_{i+j}^T d\mu]\}| d\mu \precsim n^{1-1/\alpha}, \tag{A3}
\end{equation}
where $\alpha \in (0,1/2)$, $|\cdot|$ is the Euclidean norm for vectors or matrices.

Then there is $\gamma \in (0, 1)$ which depends on $d, \alpha$ only (will be given in Appendix, Lemma \ref{para}), such that if $ \lambda(\sigma_n^2) \succsim n^{\gamma}$, then $( \phi_k \circ T^k )_{k \ge 1}\text{ satisfies the VASIP}.$
\end{theorem}

\begin{remark} We assume that $\alpha < 1/2$ throughout this paper.
\end{remark}
\begin{remark} For a stationary dynamical system, that is, $\phi_k:=\phi, T_k=T$ for all $k \ge 1$, $(T)_{*}\mu=\mu$ and $\int \phi d\mu =0 $. In this case, $\sigma^2_n \approx n \cdot I_{d \times d}$. We denote the transfer operator of $T$ by $P$ (see (\ref{21})). Then $P^i \textbf{1}=\textbf{1}$ almost surely for any $i \ge 1$ and the assumptions (\ref{A1})-(\ref{A3}) become:
\begin{equation} \label{A4}
\int |P^n (\phi) | d\mu \precsim n^{1-1/\alpha}, \tag{A4}
\end{equation}
\begin{equation} \label{A5}
\int |P^n (\phi \cdot \phi^T - \int \phi \cdot \phi^T  d\mu) |  d\mu \precsim n^{1-1/\alpha}, \tag{A5}
\end{equation}
\begin{equation} \label{A6}
\sup_{j \ge 0}\int |P^n[P^j(\phi) \cdot \phi^T - \int P^j(\phi) \cdot \phi^T d\mu]| d\mu \precsim n^{1-1/\alpha}. \tag{A6}
\end{equation}

Conditions (\ref{A4}), (\ref{A5}) are well-known to be decay of correlations if $\phi$ has some regularities. In this paper, they are called the first order decay of correlations for stationary dynamical systems, (\ref{A6}) is called a second order decay of correlation for a stationary dynamical system.

An upper bound for its VASIP convergence rate $o(\lambda(\sigma_n^2)^{\frac{1-\epsilon}{2}})=o(n^{\frac{1-\epsilon}{2}})$ in (\ref{matching}) could be obtained using our method, but we will not do it in this paper because it is far from optimal. Here are some previous results of the VASIP convergence rates for stationary dynamical systems: 
    \begin{enumerate}
        \item \cite{MN2} obtains upper bounds for Young towers,
        \item \cite{G} obtains $o(n^{\sfrac{1}{4}+ \epsilon})$ (any small $\epsilon>0$) for dynamical systems with spectral gaps.
        \item Recent papers \cite{alex1,alex2,alex3} obtain better upper bounds than \cite{G}, provided $\alpha>0$ is sufficiently small.
    \end{enumerate} 
    The work to find a better upper bound for the VASIP convergence rate $o((\sigma_n^2)^{\frac{1-\epsilon}{2}})$ for non-stationary dynamical systems is still in progress and will be presented in a separate paper.
\end{remark}

\begin{theorem}[Random Dynamical Systems]\ \label{thm2} \par

Assume a random dynamical system $(X,\mathcal{B}, (T_{\omega})_{\omega \in \Omega}, \mu, \Omega,\mathcal{F}, \mathbb{P}, \sigma, (\mu_{\omega})_{\omega \in \Omega})$ and a sequence of observables $\{\phi_{\omega} \in L^{\infty}(X,\mu;\mathbb{R}^d):  \omega \in \Omega\}$ from the Definition \ref{VASIP}, they satisfy the conditions (\ref{A1'})-(\ref{A3'}) below:
\begin{equation} \label{A1'}
\int |P_{\sigma^{i}\omega}^{n} (\phi_{\sigma^i \omega} \cdot h_{\sigma^{i} \omega})|d\mu \le C n^{1-1/\alpha}, \tag{A1'}
\end{equation}
\begin{equation} \label{A2'}
\int |P_{\sigma^{i}\omega}^{n} [(\phi_{\sigma^i \omega} \cdot \phi_{\sigma^i \omega}^T - \int \phi_{\sigma^i \omega}  \cdot \phi_{\sigma^i \omega}^T  d\mu_{\sigma^i\omega}) \cdot h_{\sigma^{i}\omega}] |  d\mu \le C  n^{1-1/\alpha}, \tag{A2'}
\end{equation}
\begin{equation} \label{A3'}
\int |P_{\sigma^{i+j} \omega}^{n}\{ [P^{j}_{\sigma^{i} \omega}(\phi_{\sigma^{i} \omega} h_{\sigma^{i}\omega}) \phi_{\sigma^{i+j} \omega}^T - h_{\sigma^{i+j}\omega}\int P^{j}_{\sigma^{i} \omega}(\phi_{\sigma^{i} \omega}  h_{\sigma^{i}\omega}) \phi_{\sigma^{i+j} \omega}^T d\mu] \}| d\mu 
\le C n^{1-1/\alpha}, \tag{A3'}
\end{equation}
where $\alpha < 1/2$, $|\cdot|$ is the Euclidean norm for vectors or matrices, $C>0$ is a constant which does not depend on $i,j,n,\omega$.

Then there are two linear subspaces (do not depend on $\omega$): $W_1, W_2 \subset{\mathbb{R}^d}$, $\mathbb{R}^d= W_1 \bigoplus W_2$ with projections $\pi_1:W_1\bigoplus W_2 \to W_1, \pi_2: W_1 \bigoplus W_2 \to W_2$ such that
\begin{enumerate}
    \item $(\pi_1 \circ  \phi_{\sigma^k \omega}  \circ T^{k}_{\omega})_{k \ge 1, \omega \in \Omega}$ satisfies the quenched VASIP.
    \item $(\pi_2 \circ  \phi_{\sigma^k \omega}  \circ T^{k}_{\omega})_{k \ge 1, \omega \in \Omega}$ is a coboundary, that is, there is $\psi \in L^1(\Omega \times X, d\mu_{\omega}d\mathbb{P})$ such that: \[  \pi_2 \circ \phi_{\sigma \omega}(T_{\omega}x)=\psi(\sigma (\omega),T_{\omega}(x))-\psi(\omega,x) \text{ a.e. }(\omega,x),\]
    where $\int 1_{A} d\mu_{\omega}d\mathbb{P}:=\int \mu_{\omega}(A_{\omega})d\mathbb{P}$ and $A_{\omega}:=\{x\in X:(\omega,x)\in A\}$ for any measurable set $A\subseteq \Omega\times X$.
\end{enumerate}
\end{theorem}

\begin{remark}\ \par
\begin{enumerate}
    \item 
Conditions (\ref{A3}), (\ref{A3'}), (\ref{A6}) can be verified by the methods of invariant cones and tower extensions, which are shown in our Corollaries \ref{cor1}, \ref{cor7} and \ref{cor6}.
\item The quasi-invariant density $h_{\omega}$ is not required to be bounded away from zero. 
\item $\sigma: \Omega \to \Omega$ is ergodic only.
\item Note that $\precsim$ in conditions (\ref{A1'})-(\ref{A3'}) does not depend on all $\omega \in \Omega$. A weaker case is studied in another paper \cite{Su3}.
\end{enumerate}

\end{remark}

\section{Applications}\label{sec3}
The paper \cite{LSV} considered a Pomeau-Manneville type map: for $\beta>0$,

\begin{eqnarray}\label{pmmap}
T_{\beta}(x) =
\begin{cases}
x+2^{\beta}x^{1+\beta},      & 0\le x \le 1/2\\
2x-1,  & 1/2< x \le 1 \\
\end{cases}.
\end{eqnarray}

It is proved that, for each $\beta\in (0,1)$, $T_{\beta}$ preserves an absolutely continuous invariant probability. \cite{nonclt} proved that the CLT (\ref{clt}) holds for $T_{\beta}$ only when $\beta \in (0,1/2)$. Now we consider the non-stationary case:

\begin{corollary}[Polynomially mixing non-stationary systems]\ \label{cor1} \par
Consider a non-stationary dynamical system $ ([0,1], \mathcal{B}, (T_{k})_{k \ge 1}, m)$ in \cite{NTV}, where $m $ is the Lebesgue measure, $T_k: =T_{\beta_k}$ $( 0<\beta_k< \alpha <1/2)$ for all $k\ge 1$.

Assume that observables $(\phi_k)_{ k \in \mathbb{N}} \subset \mathrm{Lip}([0,1]; \mathbb{R}^d)$ satisfy $\sup_{k}||\phi_k||_{\mathrm{Lip}}<\infty$ and $\int \phi_k \circ T^k d\mu=0$. Then there is $\gamma \in (0, 1)$ (the same as in the Theorem \ref{thm}), such that if  $ \lambda(\sigma_n^2) \succsim  n^{\gamma}$,
$\text{ then }(\phi_i \circ T^i)_{i\ge 1}\text{ satisfies the VASIP.} $

If $d=1$, then there is $\gamma_1 \in(0,1)$ which depends on $\alpha$ only ($\gamma_1$ will be given in Appendix, Lemma \ref{para1}), such that if $ \lambda(\sigma_n^2) \succsim  n^{\gamma_1}$, then
$(\phi_i \circ T^i)_{i\ge 1}$ satisfies the self-norming CLT (\ref{clt}).
\end{corollary}

\begin{remark} \cite{NTV} proved that, if $\alpha< 1/8$ and $\sigma_n^2$ grows with a sufficiently fast polynomial rate, then the self-norming CLT (\ref{clt}) holds for the observables $\phi_k\in C^1[0,1] $. \cite{NPT} extended it to $\alpha \in (0,1/2)$ using Stein's methods.
\end{remark}

\begin{corollary}[Exponentially mixing non-stationary systems]\ \label{cor4} \par
Consider a non-stationary dynamical system $(X, \mathcal{B}, (T_k)_{k \ge 1}, \mu)$, assume that $(\mathcal{V},||\cdot||_{\mathcal{V}})$ is a $(P_k)_{k\ge 1}$-invariant Banach algebra contained in ${(L^1,||\cdot||_{L^1})}$  and satisfies the following assumptions: there are constants $A>0$, $\rho \in (0,1)$ such that
\begin{enumerate}
\item $\textbf{1} \in \mathcal{V}$.
\item $||\cdot||_{\infty} \le A \cdot ||\cdot||_{\mathcal{V}}.$
\item For any $n,m \in \mathbb{N}$ and any $v \in \mathcal{V}$,
\[||P_{m+1}^{n+m}v||_{\mathcal{V}} \le A \cdot  ||v||_{\mathcal{V}}.\]
\item For any $n,m\in \mathbb{N}$ and any $v \in \mathcal{V}_0:=\{v \in \mathcal{V}: \int v d\mu=0\}$, we have
\[||P_{m+1}^{n+m}v||_{\mathcal{V}} \le A \cdot \rho^n \cdot ||v||_{\mathcal{V}}.\]

\end{enumerate}

Assume that observables $(\phi_k)_{ k \in \mathbb{N}} \subset \mathcal{V}$ satisfy $\int \phi_k \circ T^k d\mu=0$ and $\sup_{k}||\phi_k||_{\mathcal{V}}< \infty$. Then there is $\gamma \in (0,1)$ (the same as in the Theorem \ref{thm}), such that if $ \lambda(\sigma_n^2) \succsim n^{\gamma}$, then
 $( \phi_k \circ T^k )_{k \ge 1}\text{ satisfies the VASIP}.$
\end{corollary}

\begin{remark} We apply now this result for some dynamical systems considered in \cite{CR, HNTV}:
\begin{enumerate}
\item Non-stationary observations on Axiom A dynamical systems in Corollary 6.2 of \cite{HNTV}: let $\mathcal{V}=C^{0,\beta}$ (a $\beta$-H\"older space), $T_k:=T$ with $T_{*}\mu=\mu$, $\phi_k: X \to \mathbb{R}^d$ with $\sup_{k}||\phi_k||_{C^{0,\beta}}< \infty$ and $\lambda(\sigma^2_n) \succsim n^{\max\{\gamma, \frac{\sqrt{17}-1}{4}\}}$ where $\gamma$ is in our Theorem \ref{thm}. Here $\alpha$ in Theorem \ref{thm} is chosen to be an arbitrarily small positive number.
    \item The systems in section 7 of \cite{HNTV} are essentially the same, so we just consider the ``perturbed expanding maps $(T_k:=T_{\epsilon_k})_{k \ge 1}$ of a fixed expanding map $T$ on the circle" in Theorem 7.4 to present our VASIP result: $\mathcal{V}:=BV$, $d\mu:=hdm$ is the SRB measure for $T$, $\phi_k:=\phi-\int \phi \circ T_1^k d\mu: S^1 \to \mathbb{R}^d$ where $\phi \in \mathcal{V}$ is not a coboundary for $T$ and $\int \phi d\mu=0$. By our Corollary \ref{cor4} and Lemma 7.1 of \cite{HNTV}, $(\phi_k \circ T_1^k:=\phi \circ T_1^k-\int \phi \circ T^k_1 d\mu)_{k \ge 1}$ has the VASIP. Moreover, 
    \begin{align*}
        \sum_{k=1}^{n} \int \phi \circ T^k_1 d\mu&=\sum_{k=1}^{n} \int \phi \cdot P_1^k (h) dm=\sum_{k=1}^{ n} \int \phi \cdot [P_1^k (h)-P^k(h)] dm \\
        & \precsim \sum_{k=1}^{n} ||\phi||_{\mathcal{V}}\cdot ||P_1^k (h)-P^k(h)||_{L^1}
    \end{align*}
    where $P_1^k$ and $P$ are the transfer operators of $T_1^k$ and $T$ respectively. Then by Lemma 2.13 in \cite{CR}, we have
    \[\sup_n|\sum_{k=1}^{n} \int \phi \circ T^k_1 d\mu|=O(1).\]
    So we have the same statement of the VASIP for $(\phi \circ T_1^k)_{k \ge 1}$ as in the Theorem 7.4 of \cite{HNTV}.

\end{enumerate}
\end{remark}
\begin{remark} Observe that conditions (Min) in \cite{CR} and (LB) in \cite{HNTV} are not required here. This observation applies to the stationary dynamical systems in \cite{G} which obtained a similar result: the VASIP holds without assuming conditions (Min) and (LB) (e.g., Rychlik maps \cite{Rychi} are this type of systems). Then \cite{LM} obtained the VASIP for interval maps with singularities, by proving the VASIP for induced Rychlik maps and employing tower techniques of \cite{MT} (see the Propositions 3.1 and 4.1 in \cite{LM}).
\end{remark}

\begin{corollary}[Exponentially mixing random systems]\ \label{cor5} \par
Consider a random dynamical system $(X,\mathcal{B}, (T_{\omega})_{\omega \in \Omega}, \mu, \Omega,\mathcal{F}, \mathbb{P}, \sigma)$ defined in the Definition \ref{nonrandomdef}, assume that $(B,||\cdot||_{B})$ is a $(P_{\omega})_{\omega \in \Omega}$-invariant Banach algebra contained in ${(L^1,||\cdot||_{L^1})}$  and satisfies the following assumptions: there are constants $A>0$, $\rho \in (0,1)$ such that
\begin{enumerate}
\item $\textbf{1} \in B$.
\item The map $(\omega, x) \to  (P_{\omega}H(\omega, \cdot))(x)$ is $\mathbb{P} \otimes \mu$-measurable for every $\mathbb{P} \otimes \mu$-measurable function $H$ such that
$H(\omega, \cdot) \in L^1(X, \mu)$ for a.e. $\omega \in \Omega$.
\item $||\cdot||_{\infty} \le A \cdot ||\cdot||_{B}.$
\item For any $n \in \mathbb{N}$,  $\omega\in \Omega$ and $v \in B$,
\[||P_{\omega}^{n}v||_{B} \le A \cdot  ||v||_{B}.\]
\item For any $n\in \mathbb{N}, \omega \in \Omega$  and any $v \in B_0:=\{v \in B: \int v d\mu=0\}$, we have
\[||P_{\omega}^{n}v||_{B} \le A \cdot \rho^n \cdot ||v||_{B}.\]
\end{enumerate}

Then there are functions $h_{\omega} \in L^1$ and quasi-invariant probabilities $d\mu_{\omega}: =h_{\omega}d\mu$ such that for a.e. $\omega \in \Omega$, $P_{\omega} h_{\omega}=h_{\sigma \omega}, \sup_{\omega}||h_{\omega}||_{B}<\infty$. Moreover, assume that observables $(\phi_{\omega})_{\omega \in \Omega} \subset B$ satisfy $\int \phi_{\omega}d\mu_{\omega}=0$ and $\sup_{\omega \in \Omega}||\phi_{\omega}||_B< \infty$. Then there are two linear subspaces (do not depend on $\omega$): $W_1, W_2 \subset{\mathbb{R}^d}$, $\mathbb{R}^d= W_1 \bigoplus W_2$ with projections $\pi_1:W_1\bigoplus W_2 \to W_1, \pi_2: W_1 \bigoplus W_2 \to W_2$ such that 

\begin{enumerate}
    \item $(\pi_1 \circ  \phi_{\sigma^k \omega}  \circ T^{k}_{\omega})_{k \ge 1, \omega \in \Omega}$ satisfies the quenched VASIP.
    \item $(\pi_2 \circ  \phi_{\sigma^k \omega}  \circ T^{k}_{\omega})_{k \ge 1,\omega \in \Omega}$ is a coboundary: there is $\psi \in L^2(\Omega \times X, d\mu_{\omega}d\mathbb{P})$ such that: \[\pi_2 \circ  \phi_{\sigma \omega}(T_{\omega}x)=\psi(\sigma (\omega),T_{\omega}(x))-\psi(\omega,x) \text{ a.e. }(\omega,x).\]
\end{enumerate}
\end{corollary}
\begin{remark} \cite{DFGV} and \cite{DFGVS} consider the same random dynamical systems, which satisfy the conditions of our Corollary \ref{cor5}. (H3), (H4) in \cite{DFGV} and (C4) in \cite{DFGVS} are not required here. Our Corollary \ref{cor5} works for the random dynamical systems in \cite{DFGV, DFGVS} including Random piecewise expanding maps in higher dimensions and  Random Lasota-Yorke maps.

\end{remark}

\begin{corollary}[Polynomially mixing random systems]\ \label{cor7} \par
Consider a random system $([0,1],\mathcal{B}, (T_{\omega})_{\omega \in \Omega}, m, \Omega ,\mathcal{F}, \mathbb{P}, \sigma)$ where $m$ is the Lebesgue measure on $[0,1]$, $\Omega:=[0,1/2)^{\mathbb{Z}}$, $T_{\omega}:=T_{\omega_0}$ are the Pomeau-Manneville type maps (\ref{pmmap}) which are picked from $\{T_{\beta}: \beta \in [0,1/2)\}$ and $ \sigma : \Omega \to \Omega $ is an invertible ergodic left shift preserving a probability $\mathbb{P}$ on $\Omega$. 

Then there are functions $h_{\omega} \in L^1(m)$ and quasi-invariant probabilities $d\mu_{\omega}: =h_{\omega}dm$ such that $P_{\omega} h_{\omega}=h_{\sigma \omega}$ for a.e. $\omega \in \Omega$. Moreover, Assume that observables $(\phi_{\omega})_{\omega \in \Omega} \subset \mathrm{Lip}([0,1]; \mathbb{R}^d)$ satisfy $\sup_{\omega}||\phi_{\omega}||_{\mathrm{Lip}}<\infty$ and $\int \phi_{\omega}d\mu_{\omega}=0$. Then there are two linear subspaces (do not depend on $\omega$): $W_1, W_2 \subset{\mathbb{R}^d}$, $\mathbb{R}^d= W_1 \bigoplus W_2$ with projections $\pi_1:W_1\bigoplus W_2 \to W_1, \pi_2: W_1 \bigoplus W_2 \to W_2$ such that 

\begin{enumerate}
    \item $(\pi_1 \circ  \phi_{\sigma^k \omega}  \circ T^{k}_{\omega})_{k \ge 1,\omega \in \Omega}$ satisfies the quenched VASIP.
    \item $(\pi_2 \circ  \phi_{\sigma^k \omega}  \circ T^{k}_{\omega})_{k \ge 1,\omega \in \Omega}$ is a coboundary: there is $\psi \in L^1(\Omega \times [0,1], d\mu_{\omega}d\mathbb{P})$ such that: \[\pi_2 \circ  \phi_{\sigma \omega}(T_{\omega}x)=\psi(\sigma (\omega),T_{\omega}(x))-\psi(\omega,x) \text{ a.e. }(\omega,x).\]
\end{enumerate}

\end{corollary}
\begin{remark}
$\sigma: \Omega \to \Omega$ is ergodic only, which is weaker than the results in \cite{NTV, HS,OH, NPT}.
\end{remark}

\begin{corollary}[Stationary dynamical systems]\label{cor6}\ \par
Assume that a stationary dynamical system $(X, \mathcal{B}, T, \mu)$ (that is, $T_{*}\mu=\mu$) and a zero-mean observable $\phi: X \to \mathbb{R}^d$ satisfy (\ref{A4}), then there is a $d \times d$ positive semi-definite matrix $\Sigma^2$ and $\epsilon \in (0,1)$, such that $ \sigma_n^2=n \cdot \Sigma^2 + o(n^{1-\epsilon}) $. If the conditions (\ref{A4})-(\ref{A6}) are all satisfied, then there are two linear subspaces: $W_1, W_2 \subset{\mathbb{R}^d}$ such that $\mathbb{R}^d= W_1 \bigoplus W_2$ with projections $\pi_1:W_1\bigoplus W_2 \to W_1, \pi_2: W_1 \bigoplus W_2 \to W_2$, such that:
\begin{enumerate}
    \item $(\pi_1 \circ \phi \circ T^k)_{k \ge 1}$ satisfies the VASIP.
    \item $(\pi_2 \circ  \phi \circ T^{k})_{k \ge 1}$ is a coboundary, that is, there is $\psi \in L^1( X, d\mu)$ such that: \[\pi_2 \circ \phi (Tx)=\psi(Tx)-\psi(x)\text{ a.e.}\]
\end{enumerate}

In particular, if a dynamical system can be described by a Young tower $\Delta$ \cite{Y}, that is, $(\Delta, \mathcal{B}, F, v)$ with $v \circ F^{-1}=v$, $dv=\frac{dv}{dm}dm$ is exact, $m$ is a reference measure on $\Delta$, a return map $R$ is defined on the base of the tower: $\Delta_0=\bigsqcup_{i\ge 1} \Delta_{0,i}$ such that $R|_{\Delta_{0,i}}\equiv R_i \in \mathbb{N},  \gcd\{R_i\}=1$, $\int_{\Delta_0}R dm <\infty$ and $\Delta=\{(x,n) \in \Delta_0 \times \mathbb{N}_0: n<R(x)\}$. $F^R:\Delta_0 \to \Delta_0$ is a Gibbs-Markov map, satisfying 
\begin{equation}\label{distor}
   |\frac{JF^R(x)}{JF^R(y)}-1|\precsim \beta^{s(F^R(x), F^R(x))} 
\end{equation}
where $J$ is the Jacobian w.r.t. $m$, $\beta \in (0,1)$, $s(x,y)$ is the separation time defined on $\Delta_0 \times \Delta_0$:

\[s(x,y):=\min \{n \ge 0: (F^R)^n(x), (F^R)^n(y) \text{ lie in distinct } \Delta_{0,i} \}.\]

Meanwhile, we endow a metric $d$ on $\Delta$: for any $z_1=(x_1,n_1) \in \Delta, z_2=(x_2,n_2) \in \Delta$,
\begin{eqnarray}
d(z_1,z_2): =
\begin{cases}
\beta^{s(x_1,x_2)},      & n_1=n_2\\
1,  & n_1 \neq n_2 \\
\end{cases}.
\end{eqnarray}

Then for the stationary Young tower $(\Delta, \mathcal{B}, F, v)$ and any zero-mean observable $\phi \in \mathrm{Lip}(\Delta)$, all conditions (\ref{A4})-(\ref{A6}) are all satisfied. On the other hand, the stationary dynamical systems such as Pomeau-Manneville maps, Viana maps considered in \cite{MN1} and \cite{MN2} can be described by Young towers. Therefore we recover the VASIP for those systems.
\end{corollary}

\begin{remark}
Unlike our direct verification of conditions (\ref{A4})-(\ref{A6}), Melbourne and Nicol \cite{MN1, MN2} generalized the ideas from \cite{BP, KP} and used the Markov partitions of Young towers to prove the VASIP (e.g., see Theorem 2.8 in \cite{MN1}).

\end{remark}

\section{A Purely Probabilistic Theorem}\label{pureprobthm}
Before giving the proofs of Theorems \ref{thm} and \ref{thm2}, we start with a purely probabilistic theorem. 
\begin{theorem}\label{thm3}
 Let $(X_k)_{k\ge 1}$ be a sequence of random vectors in $\mathbb{R}^d$ on a probability space $(X, \mathcal{B}, \mu)$ and $(\mathcal{E}_k)_{k\ge1}$ be a decreasing filtration (i.e. $\mathcal{E}_{k+1} \subseteq \mathcal{E}_{k}$ for all $k \ge 1$) such that $X_k$ is $\mathcal{E}_k$-measurable.

We denote the conditional expectation w.r.t. $\mathcal{E}_n$  and $\mu$ by:
\[\mathbb{E}_n(\cdot):=\mathbb{E}(\cdot|\mathcal{E}_n).\] 

In particular, the expectation (that is, the conditional expectation w.r.t. $\{\emptyset, X\}$ and $\mu$) is denoted by:
\[\mathbb{E}(\cdot):=\int (\cdot ) d\mu.\]

If $(X_k)_{k \ge 1}$ satisfy the conditions (\ref{A0''})-(\ref{A3''}) below:
\begin{equation} \label{A0''}
\sup_k||X_k||_{\infty}<\infty \text{ and } \mathbb{E}X_k=0, \tag{A0''}
\end{equation}
\begin{equation} \label{A1''}
\sup_{i\ge 1}\mathbb{E} |\mathbb{E}_{n+i} X_i| \precsim   n^{1-1/\alpha}, \tag{A1''}
\end{equation}
\begin{equation} \label{A2''}
\sup_{i\ge 1}\mathbb{E} |\mathbb{E}_{n+i} [X_i \cdot X_i^T - \mathbb{E}(X_i \cdot X_i^T)] | \precsim  n^{1-1/\alpha}, \tag{A2''}
\end{equation}
\begin{equation} \label{A3''}
\sup_{i\ge 1,j\ge 0}\mathbb{E}|\mathbb{E}_{n+i+j} [X_i \cdot X_{i+j}^T-\mathbb{E}(X_i \cdot X_{i+j}^T)]| \precsim n^{1-1/\alpha}, \tag{A3''}
\end{equation}
where $\alpha \in (0,1/2)$, $|\cdot|$ is the Euclidean norm for vectors or matrices.

Define the $d \times d$-matrix
\[\sigma^2_n:=\mathbb{E}(\sum_{i=1}^{n}X_i)\cdot (\sum_{i=1}^{ n}X_i)^T,\]
and the least eigenvalue of $\sigma_n^2$ by 
\[\lambda(\sigma_n^2):= \inf_{ |u|=1, u \in \mathbb{R}^d}\int(u^{T} \cdot \sum_{k=1}^{n} X_k )^2 d\mu.\]

Then there is $\gamma \in (0, 1)$ which depends on $d, \alpha$ only (will be given in Appendix, Lemma \ref{para}), such that if $ \lambda(\sigma_n^2) \succsim n^{\gamma}$, then there exists a constant $\epsilon' \in (0,1) $ and zero-mean $d$-dimensional independent Gaussian random vectors $ (G_k)_{k \ge 1}$ defined on some extended probability space of $(X, \mathcal{B},\mu)$, they satisfy
\begin{equation}\label{22}
    \sum_{k=1}^{n} X_k - \sum_{k=1}^{n} G_k=o((\lambda(\sigma_n^2))^{\frac{1-\epsilon'}{2}}) \text{ a.s.,}
\end{equation}
\begin{equation}\label{23}
  \sigma^2_n=\sum_{k=1}^{n} \tilde{\mathbb{E}}G_k\cdot G_k^T+ o((\lambda(\sigma_n^2))^{1-\epsilon'}),  
\end{equation}
where $\tilde{\mathbb{E}}$ is the expectation w.r.t. the probability $\tilde{P}$ of the extended probability space of $(X, \mathcal{B}, \mu)$. 
\end{theorem}

In the following subsections, we focus on the proof of Theorem \ref{thm3}.
\subsection{Several Inequalities}
In this subsection, we will obtain several inequalities derived from the conditions (\ref{A0''})-(\ref{A3''}), which are needed to prove the Theorem \ref{thm3}.

\begin{lemma} \label{sublinear}
If the conditions (\ref{A0''}) and (\ref{A1''}) are satisfied, then there is a constant $C>0$ such that for all $n,m \in \mathbb{N}$
\[|\int(\sum_{k=m}^{ n+m-1} X_k )\cdot(\sum_{k=m}^{m+n-1} X_k)^{T} d\mu|\le Cn.\]

\end{lemma}
\begin{proof}
\begin{align*}
    \int(\sum_{k=m}^{n+m-1} X_k )\cdot (\sum_{k=m}^{m+n-1} X_k )^{T} d\mu&=\int \sum_{k=m}^{n+m-1} X_k \cdot X_k^T+\sum_{m\le i<j \le n+m-1} X_i  \cdot X_j^T \\
    &\quad+(\sum_{m\le i<j \le n+m-1} X_i  \cdot X_j^T)^T d\mu.
\end{align*}

By (\ref{A0''}), the equality above becomes 
\begin{align*}
    &= O(n)  +  \sum_{m\le i<j \le n+m-1} \int (\mathbb{E}_{j}X_i)\cdot X_j^T+[(\mathbb{E}_{j}X_i)\cdot X_j^T]^T d\mu \\
    &\precsim n+ \sum_{m\le i<j \le n+m-1} \int |(\mathbb{E}_{j}X_i)|d\mu.
\end{align*}

By (\ref{A1''}) and $\alpha \in (0,\frac{1}{2})$, the inequality above becomes 
\begin{align*}
    &\precsim n+ \sum_{m\le i<j \le n+m} (j-i)^{1-1/\alpha} =n+ \sum_{1\le i<j \le n} (j-i)^{1-1/\alpha}\\
    &\precsim n+ \sum_{j=2}^{n} \sum_{1 \le i <j} (j-i)^{1-1/\alpha}=O(n).
\end{align*}

All constants in $\precsim $, $O(\cdot)$  do not depend on $m,n$.
\end{proof}

\begin{lemma}\label{conditionalbound}
If (\ref{A1''}) is satisfied, then the following holds:
\[\sup_{n,m\ge 1}\mathbb{E}|\mathbb{E}_{n+m} \sum_{k=m}^{n+m-1 } X_k |=O(1).\]
\end{lemma}
\begin{proof}
By (\ref{A1''}),
\[\mathbb{E}|\mathbb{E}_{n+m} \sum_{k=m=1}^{n+m-1 } X_k | \le \sum_{k=m}^{n+m-1 } \mathbb{E}|\mathbb{E}_{n+m} X_k | \precsim \sum_{k=m}^{n+m-1 } (m+n-k)^{1-1/\alpha}=O(1) .\]
  
All constants in $\precsim $, $O(\cdot)$ do not depend on all $m,n$.
The last equality holds because of $1/\alpha-1>1$.
\end{proof}

\begin{lemma}\label{conditionalbound2}
If the conditions (\ref{A0''})-(\ref{A3''}) are all satisfied, then there is a constant $C>0$ such that for any $n,m\ge 1$
\[\mathbb{E}|\mathbb{E}_{n+m} [(\sum_{k=m }^{n+m-1} X_k) \cdot (\sum_{k=m }^{n+m-1} X_k)^T]-\mathbb{E} [(\sum_{k=m }^{n+m-1} X_k) \cdot (\sum_{k=m }^{n+m-1} X_k)^T] |\le C n^{\frac{\alpha}{1-\alpha}}.\]
\end{lemma}

\begin{proof}
\begin{align}
    \mathbb{E}&|\mathbb{E}_{n+m} [(\sum_{k=m}^{n+m-1} X_k )\cdot (\sum_{k=m }^{n+m-1} X_k )^{T}] -\mathbb{E} [(\sum_{k=m }^{n+m-1} X_k) \cdot (\sum_{k=m }^{n+m-1} X_k)^T] | \nonumber\\
    &\le \mathbb{E}|\mathbb{E}_{n+m} (\sum_{k=m}^{n+m-1} X_k \cdot X_k^T)-\mathbb{E}(\sum_{k=m}^{n+m-1} X_k \cdot X_k^T)|\nonumber\\
    &\quad+\mathbb{E}|\mathbb{E}_{n+m} (\sum_{m\le i<j \le n+m-1} X_i \cdot X_j^T)-\mathbb{E} (\sum_{m\le i<j \le n+m-1} X_i \cdot X_j^T)|\nonumber\\ &\quad+\mathbb{E}|[\mathbb{E}_{n+m} (\sum_{m\le i<j \le n+m-1} X_i \cdot X_j^T)-\mathbb{E} (\sum_{m\le i<j \le n+m-1} X_i \cdot X_j^T)]^T|\nonumber\\
    &\le \mathbb{E}|\mathbb{E}_{n+m} [\sum_{k=m}^{ n+m-1} X_k \cdot X_k^T-\mathbb{E} (X_k \cdot X_k^T)]|\label{1}\\
    &\quad+2\mathbb{E}|\mathbb{E}_{n+m} [\sum_{m\le i<j \le n+m-1} X_i \cdot X_j^T-\mathbb{E} (X_i \cdot X_j^T)]|. \label{2}
\end{align}

Estimate (\ref{1}): by (\ref{A2''}) and $\alpha \in (0, 1/2)$,
\[(\ref{1})\le \sum_{k=m }^{n+m-1} \mathbb{E}|\mathbb{E}_{n+m} [ X_k \cdot X_k^T-\mathbb{E} (X_k \cdot X_k^T)]| \precsim \sum_{k=m }^{n+m-1} (m+n-k)^{1-1/\alpha}=O(1). \]

Estimate (\ref{2}): for any fixed $j\le n+m-1$:
\[\mathbb{E}|\mathbb{E}_{j} [\sum_{m\le i<j } X_i \cdot X_j^T-\mathbb{E}(X_i \cdot X_j^T)]| \le   \sum_{m\le i<j } \mathbb{E}|\mathbb{E}_{j} (X_i \cdot X_j^T)-\mathbb{E} (X_i \cdot X_j^T)|\le  2\sum_{m\le i<j } \mathbb{E}|\mathbb{E}_{j} (X_i \cdot X_j^T)|.\]

By (\ref{A1''}) and $\alpha\in (0, 1/2)$, the inequality above becomes:

\[ \precsim  \sum_{m\le i<j } \mathbb{E}|\mathbb{E}_{j} X_i | \precsim \sum_{m\le i<j } (j-i)^{1-1/\alpha}=O(1).\]

That is, for any fixed $j \le n+m-1$, 
\begin{equation} \label{3}
 \sum_{m\le i<j } \mathbb{E}|\mathbb{E}_{j} [X_i \cdot X_j^T-\mathbb{E} (X_i \cdot X_j^T)]|=O(1).
\end{equation}

Let $\delta:=\frac{\alpha}{1-\alpha}< 1 $, then 
\begin{align*}
     (\ref{2})&\precsim\sum_{j=m+1}^{n+m-1}\sum_{m\le i<j } \mathbb{E}|\mathbb{E}_{n+m} [X_i \cdot X_j^T-\mathbb{E} (X_i \cdot X_j^T)]|\\
     &=\sum_{j=m+1}^{n+m-1}\sum_{m\le i<j } \mathbb{E}|\mathbb{E}_{n+m} \mathbb{E}_{j} [X_i \cdot X_j^T-\mathbb{E}( X_i \cdot X_j^T)]|\\
    &\le \sum_{j=n+m-\lfloor n^{\delta} \rfloor+1}^{n+m-1}\sum_{m\le i<j } \mathbb{E}|\mathbb{E}_j [X_i \cdot X_j^T-\mathbb{E} (X_i \cdot X_j^T)]|\\
    &\quad+\sum_{j=m+1}^{ n+m-\lfloor n^{\delta} \rfloor}\sum_{m\le i<j } \mathbb{E}|\mathbb{E}_{n+m} [X_i \cdot X_j^T-\mathbb{E} (X_i \cdot X_j^T)]|.
\end{align*}

By (\ref{3}) and (\ref{A3''}), the inequality above becomes
\begin{align*}
    &\precsim  \lfloor n^{\delta} \rfloor+ \sum_{j=m+1}^{n+m-\lfloor n^{\delta} \rfloor}\sum_{m\le i<j } \mathbb{E}|\mathbb{E}_{n+m} [X_i \cdot X_j^T-\mathbb{E} (X_i \cdot X_j^T)]|\\
    &\precsim \lfloor n^{\delta} \rfloor+  \sum_{j=m+1}^{ n+m-\lfloor n^{\delta} \rfloor}\sum_{m\le i<j } (n+m-j)^{1-1/\alpha}\\
    &\le \lfloor n^{\delta} \rfloor+ \sum_{j=m+1}^{n+m-\lfloor n^{\delta} \rfloor} \frac{j-m}{(n+m-j)^{1/\alpha-1}}=\lfloor n^{\delta} \rfloor+ \sum_{j=1}^{ n-\lfloor n^{\delta} \rfloor} \frac{j}{(n-j)^{1/\alpha-1}}\\
    &=\lfloor n^{\delta} \rfloor+ \sum_{j=1}^{ n-\lfloor n^{\delta} \rfloor} \frac{j/n}{(1-j/n)^{1/\alpha-1}}\cdot n^{-1}\cdot n^{3-1/\alpha}\\
    &\precsim \lfloor n^{\delta} \rfloor+\int_0^{\frac{n-\lfloor n^{\delta} \rfloor}{n}} \frac{x}{(1-x)^{1/\alpha-1}}dx \cdot n^{3-1/\alpha}=\lfloor n^{\delta} \rfloor+\int^1_{\lfloor n^{\delta} \rfloor/n} \frac{1-x}{x^{1/\alpha-1}}dx \cdot n^{3-1/\alpha}\\
    &\precsim\begin{cases}
 n^{\delta} +  n^{1-\delta},      &1/\alpha-1=2\\
n^{1+\delta(2-1/\alpha)}+n^{\delta},  & 1/\alpha-1 \neq 2\\
\end{cases}\precsim n^{\frac{\alpha}{1-\alpha}}.
    \end{align*}
 
All constants in $\precsim $, $O(\cdot)$ do not depend on $m,n$.
\end{proof}
\begin{lemma} \label{conditionalhigherbound}
If the conditions (\ref{A0''})-(\ref{A3''}) are all satisfied, then for any $\epsilon \in (0, 1-\frac{\alpha}{1-\alpha})$ there is a constant $C_{\epsilon}>0$ such that for any $n,m\ge 1$
\[\mathbb{E}\{|\mathbb{E}_{n+m} [(\sum_{k=m}^{n+m-1} X_k) \cdot (\sum_{k=m}^{n+m-1} X_k)^T]-\mathbb{E} [(\sum_{k=m}^{n+m-1} X_k) \cdot (\sum_{k=m}^{n+m-1} X_k)^T] |^{1+ \epsilon}\}\le C_{\epsilon} n^{1+\epsilon}.\]

\end{lemma}
\begin{proof}

Let $\beta > \epsilon, \delta>0 $ (will be given later), and
\[\Delta: =\mathbb{E}_{n+m} [(\sum_{k=m}^{n+m-1} X_k) \cdot (\sum_{k=m}^{n+m-1} X_k)^T]-\mathbb{E} [(\sum_{k=m}^{n+m-1} X_k) \cdot (\sum_{k=m}^{n+m-1} X_k)^T].\]

Then 
\begin{align*}
    \mathbb{E}(|\Delta|^{1+\epsilon})&=\int_{|\Delta|>\delta}|\Delta|^{1+\epsilon} d\mu+\int_{|\Delta| \le \delta}|\Delta|^{1+\epsilon}d\mu \le \int_{|\Delta|>\delta}|\Delta|^{1+\beta} |\Delta|^{\epsilon-\beta} d\mu  +\delta^{\epsilon} \cdot \mathbb{E}|\Delta|\\
    &\le \delta^{\epsilon-\beta} \int |\Delta|^{1+\beta}  d\mu  +\delta^{\epsilon} \cdot \mathbb{E}|\Delta|.
\end{align*}

By the convexity of function $|\cdot|^{1+\beta}$ and the H\"older inequality, the inequality above becomes
\begin{align*}
    &\le \delta^{\epsilon-\beta} \int 2^{\beta}\cdot  |(\sum_{k=m}^{n+m-1} X_k) \cdot (\sum_{k=m}^{n+m-1} X_k)^T|^{1+\beta}  d\mu +\delta^{\epsilon} \cdot \mathbb{E}|\Delta|\\
    &\le 2^{\beta} \cdot \delta^{\epsilon-\beta} \cdot  \int |(\sum_{k=m}^{n+m-1 } X_k) \cdot (\sum_{k=m}^{n+m-1 } X_k)^T|^{1+\beta}  d\mu  +\delta^{\epsilon} \cdot \mathbb{E}|\Delta|.
\end{align*}

By the Minkowski's inequality, Lemma \ref{conditionalbound2} and (\ref{A0''}), the inequality above becomes
\[\le 2^{\beta} \cdot \delta^{\epsilon-\beta} \cdot ( \sum_{k=m}^{n+m-1} ||X_k||_{L^{2+2\beta}} )^{2+2\beta} +\delta^{\epsilon} \cdot \mathbb{E}|\Delta| \le 2^{\beta} \cdot \delta^{\epsilon-\beta} n^{2+2\beta}+\delta^{\epsilon} n^{\frac{\alpha}{1-\alpha}}.\]

Let $\delta= n^{\frac{2+2\beta-\frac{\alpha}{1-\alpha}}{\beta}}$, then $\mathbb{E}(|\Delta|^{1+\epsilon}) \le 2^{\beta+1} \cdot n^{\epsilon \cdot \frac{2+2\beta-\frac{\alpha}{1-\alpha}}{\beta}+\frac{\alpha}{1-\alpha}}$. If $\epsilon \in (0,1-\frac{\alpha}{1-\alpha})$, we can choose a large $\beta$ such that $ n^{\epsilon \cdot \frac{2+2\beta-\frac{\alpha}{1-\alpha}}{\beta}+\frac{\alpha}{1-\alpha}} \le n^{1+\epsilon}$. Then $\mathbb{E}(|\Delta|^{1+\epsilon}) \precsim_{\epsilon}  n^{1+\epsilon}$, and the constants in $\precsim$ do not depend on $m,n$.
\end{proof}

\begin{lemma}\label{neighborbound}
If the conditions (\ref{A0''}) and (\ref{A1''}) are satisfied, then there is a constant $C>0$ such that for any $n,p,m,q \in \mathbb{N}$
\[|\mathbb{E}  [(\sum_{k=m}^{n+m-1} X_k) \cdot (\sum_{k=q+m+n}^{q+n+m+p-1}X_k)^T]|\le C\max(n,p)^{\max(3-1/\alpha, 0)}. \]
\end{lemma}

\begin{proof}

Let $\delta<1$ (will be given later), $\bar{n}:=\max(n,p)$, then for any $q \ge 0$,
\[\mathbb{E}  [(\sum_{k=m}^{n+m-1} X_k) \cdot (\sum_{k=q+m+n}^{q+n+m+p-1} X_k)^T]= \sum_{k=m}^{n+m-1} \sum_{j=q+m+n}^{q+n+m+p-1} \mathbb{E}  (X_k \cdot  X_j^T).\]

By the conditions (\ref{A0''}) and (\ref{A1''}), the equality above becomes
\begin{align*}
    &\le \sum_{k=m}^{n+m-1} \sum_{j=q+m+n}^{q+n+m+p-1} \mathbb{E}|\mathbb{E}_j X_k| \precsim \sum_{k=m}^{n+m-1} \sum_{j=q+m+n}^{q+n+m+p-1} (j-k)^{1-1/\alpha}\\
    &=\sum_{k=m}^{n+m-1} \sum_{j=q+m+n}^{q+n+m+p-1} (j-(m+n)+(m+n)-k)^{1-1/\alpha}\\
    &=\sum_{1\le k \le n } \sum_{0\le j \le p-1 } (j+k+q)^{1-1/\alpha}\precsim \sum_{1\le k \le n } \sum_{1\le j \le p } (j+k)^{1-1/\alpha}\\
    &\le \sum_{1\le k \le \bar{n} } \sum_{1\le j \le \bar{n} } (j+k)^{1-1/\alpha}\precsim \lfloor \bar{n}^{\delta} \rfloor +\sum_{\lfloor \bar{n}^{\delta} \rfloor\le k, j \le \bar{n} } (j+k)^{1-1/\alpha}\\
    &\precsim \lfloor \bar{n}^{\delta} \rfloor + \sum_{\lfloor \bar{n}^{\delta} \rfloor\le k, j \le \bar{n} } (j/\bar{n}+k/\bar{n})^{1-1/\alpha} {\bar{n}}^{-1} \cdot  {\bar{n}}^{-1} \cdot  \bar{n}^{3-1/\alpha}\\
    &\precsim \lfloor \bar{n}^{\delta} \rfloor + \int^{1}_{\lfloor \bar{n}^{\delta} \rfloor/\bar{n}}\int^{1}_{\lfloor \bar{n}^{\delta} \rfloor/\bar{n}} (x+y)^{1-1/\alpha}dxdy \cdot  \bar{n}^{3-1/\alpha}\\
    &\precsim \lfloor \bar{n}^{\delta} \rfloor + \bar{n}^{3-1/\alpha} \cdot \int^1_{\lfloor \bar{n}^{\delta} \rfloor/\bar{n}}(1+y)^{2-1/\alpha}-(y+\lfloor \bar{n}^{\delta} \rfloor/\bar{n})^{2-1/\alpha}dy \\
    &\precsim \lfloor \bar{n}^{\delta} \rfloor + \bar{n}^{3-1/\alpha} \cdot [2^{3-1/\alpha}-(1+\lfloor \bar{n}^{\delta} \rfloor/\bar{n})^{3-1/\alpha}-(1+\lfloor \bar{n}^{\delta} \rfloor/\bar{n})^{3-1/\alpha}+(2\lfloor \bar{n}^{\delta} \rfloor/\bar{n})^{3-1/\alpha}]\\
    &\precsim
\begin{cases}
 \bar{n}^{\delta} + \bar{n}^{3-1/\alpha},      & 3-1/\alpha>0 \\
 \bar{n}^{\delta},  & 3-1/\alpha \le 0 \\
\end{cases}\\
&
\precsim  \begin{cases}
  \bar{n}^{3-1/\alpha},      & 3-1/\alpha>0,\text{ } \delta=3-1/\alpha \\
 1,  & 3-1/\alpha\le 0, \text{ }\delta=0 \\
\end{cases}= \bar{n}^{\max(3-1/\alpha, 0)}.
\end{align*}

All constants in $\precsim $, $O(\cdot)$ do not depend on $m,n,p,q$.
\end{proof}

\begin{lemma}[A maximal inequality]\label{max}\ \par
If the conditions (\ref{A0''})-(\ref{A3''}) are satisfied, then for any $\epsilon \in (0,\min\{1, 2-\frac{2\alpha}{1-\alpha}\})$ there is a constant $C_{\epsilon}>0$ such that
\[\mathbb{E} (\max_{m\le k \le m+n-1}|\sum_{i=m}^{k} X_i|^{2+\epsilon}) \le C_{\epsilon} \cdot  n^{1+\epsilon/2} \text{ for all } m,n \in \mathbb{N}.\]

\end{lemma}
\begin{proof}
Similar to the martingale maximal inequality, Serfling \cite{S1,S2} proved a maximal inequality for a random process (non-martingale) adapted to an increasing filtration. Although in different settings, we can still follow the idea of Theorem 3.1 in \cite{S1}, then apply Theorem B in \cite{S2} to obtain the desired bound in our Lemma \ref{max}.
 
Note that if all $(X_k)_{k\in \mathbb{N}}$ satisfy the conditions (\ref{A0''})-(\ref{A3''}), all coordinates of $(X_k)_{k\in \mathbb{N}}$ satisfy them too. Without loss of generality, we assume that all $(X_k)_{k\in \mathbb{N}}$ are random variables satisfying the conditions (\ref{A0''})-(\ref{A3''}).

First we claim:
\[\sup_{n,m \ge 1}\frac{\mathbb{E} (|\sum_{i=m}^{m+n-1} X_i|^{2+\epsilon})}{n^{\frac{2+\epsilon}{2}}} < \infty.\]

Let $A:=\sum_{i=m}^{ m+\lfloor n/2 \rfloor-1} X_i$,  $B:=\sum_{i=m + \lfloor n/2 \rfloor}^{ m+n-1} X_i$, $\epsilon \in (0,1)$ (will be determined later),
\begin{align}
\mathbb{E} (|{\sum}_{i=m}^{m+n-1} X_i|^{2+\epsilon}) &= \mathbb{E} (|A+B|^{2+\epsilon}) \le 
\mathbb{E} [(|A|+|B|)^2 \cdot (|A|^{\epsilon}+|B|^{\epsilon})]\nonumber\\
&=
\mathbb{E} [(A^2+B^2+2|A|\cdot|B| )\cdot (|A|^{\epsilon}+|B|^{\epsilon})]\nonumber\\
&=
\mathbb{E} (|A|^{2+\epsilon}+|B|^{2+\epsilon}+2|A|\cdot|B|^{1+\epsilon} \nonumber \\
& \quad +2|B|\cdot|A|^{1+\epsilon}+B^2\cdot|A|^{\epsilon}+A^2\cdot|B|^{\epsilon}). \label{expan}
\end{align}

Let $s+t=2+\epsilon, s\in (0,2]$, $\epsilon/2< 1-\frac{\alpha}{1-\alpha}$, by the H\"older inequality,
\begin{align*}
  \mathbb{E}[|A|^s \cdot |B|^t]&= \mathbb{E}\{[\mathbb{E}_{m+\lfloor n/2 \rfloor}(|A|^s)] \cdot |B|^t \} \le \mathbb{E}\{[\mathbb{E}_{m+\lfloor n/2 \rfloor}(|A|^2)]^{s/2} \cdot |B|^t\} \\
  &=\mathbb{E}\{[\mathbb{E}_{m+\lfloor n/2 \rfloor}(|A|^2)-\mathbb{E}(|A|^2)+\mathbb{E}(|A|^2)]^{s/2} \cdot |B|^t\} \\
  &\le \mathbb{E}[|\mathbb{E}_{m+\lfloor  n/2 \rfloor}(|A|^2)-\mathbb{E}(|A|^2)|^{s/2} \cdot |B|^t]+[\mathbb{E}(|A|^2)]^{s/2}\cdot \mathbb{E} (|B|^t)\\
  & \le [\mathbb{E} (|B|^{2+\epsilon})]^{\frac{t}{2+\epsilon}} \cdot \{\mathbb{E}[|\mathbb{E}_{m+\lfloor n/2 \rfloor}(|A|^2)-\mathbb{E}(|A|^2)|^{\frac{2+\epsilon}{2}}]\}^{\frac{s}{2+\epsilon}}\\
  & \quad +[\mathbb{E} (|B|^{2+\epsilon})]^{\frac{t}{2+\epsilon}}  \cdot [\mathbb{E}(|A|^2)]^{s/2}.
\end{align*}

By Lemma \ref{sublinear} and Lemma \ref{conditionalhigherbound}, there is a constant $\bar{C}$ (does not depend on all $n,m$) such that the inequality above becomes
\[\le 2[\mathbb{E} (|B|^{2+\epsilon})]^{\frac{t}{2+\epsilon}} \cdot {\lfloor n/2 \rfloor}^{s/2} \cdot \bar{C}.\]

Apply the inequality above to (\ref{expan}) for $s=1, 1+\epsilon, \epsilon, 2$, respectively, then
\begin{align*}
    \mathbb{E}(|A+B|^{2+\epsilon}) &\le \mathbb{E} (|A|^{2+\epsilon})+\mathbb{E}(|B|^{2+\epsilon})+4 \cdot \bar{C} \cdot [\mathbb{E} (|B|^{2+\epsilon})]^{\frac{1+\epsilon}{2+\epsilon}} \cdot {\lfloor n/2 \rfloor}^{1/2}\\
    & \quad+4\cdot \bar{C} \cdot[\mathbb{E} (|B|^{2+\epsilon})]^{\frac{1}{2+\epsilon}} \cdot {\lfloor n/2 \rfloor}^{\frac{1+\epsilon}{2}}+2\cdot \bar{C} \cdot[\mathbb{E} (|B|^{2+\epsilon})]^{\frac{2}{2+\epsilon}} \cdot {\lfloor n/2 \rfloor}^{\epsilon/2}\\
    & \quad +2\cdot \bar{C} \cdot[\mathbb{E} (|B|^{2+\epsilon})]^{\frac{\epsilon}{2+\epsilon}} \cdot {\lfloor n/2 \rfloor}.
\end{align*}

Hence we have
\begin{align*}
    \frac{\mathbb{E}(|A+B|^{2+\epsilon})}{n^{\frac{2+\epsilon}{2}}} &\le  \frac{\mathbb{E} (|A|^{2+\epsilon})}{\lfloor n/2 \rfloor^{\frac{2+\epsilon}{2}}} \cdot \frac{\lfloor n/2 \rfloor^{\frac{2+\epsilon}{2}}}{n^{\frac{2+\epsilon}{2}}}+ \frac{\mathbb{E}(|B|^{2+\epsilon})}{(n-\lfloor n/2 \rfloor)^{\frac{2+\epsilon}{2}}}\cdot \frac{(n-\lfloor n/2 \rfloor)^{\frac{2+\epsilon}{2}}}{n^{\frac{2+\epsilon}{2}}} \\
    & \quad+4 \cdot \bar{C} \cdot [\frac{\mathbb{E} (|B|^{2+\epsilon})}{(n-\lfloor n/2 \rfloor)^{\frac{2+\epsilon}{2}}}]^{\frac{1+\epsilon}{2+\epsilon}} \cdot \frac{ {\lfloor n/2 \rfloor}^{1/2}\cdot (n-\lfloor n/2 \rfloor)^{\frac{1+\epsilon}{2}}}{n^{\frac{2+\epsilon}{2}}}\\
    &\quad +4\cdot \bar{C} \cdot[\frac{\mathbb{E} (|B|^{2+\epsilon})}{(n-\lfloor n/2 \rfloor)^{\frac{2+\epsilon}{2}}}]^{\frac{1}{2+\epsilon}} \cdot \frac{{\lfloor n/2 \rfloor}^{\frac{1+\epsilon}{2}} \cdot (n-\lfloor n/2 \rfloor)^{1/2}}{n^{\frac{2+\epsilon}{2}}}\\
    & \quad+2\cdot \bar{C} \cdot[\frac{\mathbb{E} (|B|^{2+\epsilon})}{(n-\lfloor n/2 \rfloor)^{\frac{2+\epsilon}{2}}}]^{\frac{2}{2+\epsilon}} \cdot \frac{{\lfloor n/2 \rfloor}^{\epsilon/2} \cdot (n-\lfloor n/2 \rfloor)}{n^{\frac{2+\epsilon}{2}}}\\
    &\quad+2\cdot \bar{C} \cdot[\frac{\mathbb{E} (|B|^{2+\epsilon})}{(n-\lfloor n/2 \rfloor)^{\frac{2+\epsilon}{2}}}]^{\frac{\epsilon}{2+\epsilon}} \cdot \frac{{\lfloor n/2 \rfloor}\cdot (n-\lfloor n/2 \rfloor)^{\epsilon/2}}{n^{\frac{2+\epsilon}{2}}}.
\end{align*}
Using $\frac{(n-\lfloor n/2 \rfloor)^{\frac{2+\epsilon}{2}}}{n^{\frac{2+\epsilon}{2}}}=[1/2+o(1)]^{\frac{2+\epsilon}{2}}$, the inequality above becomes
\begin{align*}
    &= \frac{\mathbb{E} (|A|^{2+\epsilon})}{\lfloor n/2 \rfloor^{\frac{2+\epsilon}{2}}} \cdot [1/2+o(1)]^{\frac{2+\epsilon}{2}}+ \frac{\mathbb{E}(|B|^{2+\epsilon})}{(n-\lfloor n/2 \rfloor)^{\frac{2+\epsilon}{2}}}\cdot [1/2+o(1)]^{\frac{2+\epsilon}{2}}\\
    &\quad+4\cdot \bar{C} \cdot[\frac{\mathbb{E} (|B|^{2+\epsilon})}{(n-\lfloor n/2 \rfloor)^{\frac{2+\epsilon}{2}}}]^{\frac{1+\epsilon}{2+\epsilon}} \cdot [1/2+o(1)]^{\frac{2+\epsilon}{2}}+4\cdot \bar{C} \cdot[\frac{\mathbb{E} (|B|^{2+\epsilon})}{(n-\lfloor n/2 \rfloor)^{\frac{2+\epsilon}{2}}}]^{\frac{1}{2+\epsilon}} \cdot [1/2+o(1)]^{\frac{2+\epsilon}{2}}\\
    &\quad+2\cdot \bar{C} \cdot[\frac{\mathbb{E} (|B|^{2+\epsilon})}{(n-\lfloor n/2 \rfloor)^{\frac{2+\epsilon}{2}}}]^{\frac{2}{2+\epsilon}} \cdot [1/2+o(1)]^{\frac{2+\epsilon}{2}}+2\cdot \bar{C} \cdot[\frac{\mathbb{E} (|B|^{2+\epsilon})}{(n-\lfloor n/2 \rfloor)^{\frac{2+\epsilon}{2}}}]^{\frac{\epsilon}{2+\epsilon}} \cdot [1/2+o(1)]^{\frac{2+\epsilon}{2}}.
\end{align*}
 Let $a_n:=\max(\sup_{m\ge 1}\frac{\mathbb{E} (|\sum_{i=m}^{ m+n-1} X_i |^{2+\epsilon})}{n^{\frac{2+\epsilon}{2}}},\sup_{m\ge 1}\frac{\mathbb{E} (|\sum_{i=m}^{m+n} X_i |^{2+\epsilon})}{(n+1)^{\frac{2+\epsilon}{2}}})$, the estimates above show that:
 \begin{equation}\label{recurrencerelation}
  a_n \le  [1/2+o(1)]^{\frac{2+\epsilon}{2}} \cdot (2a_{\lfloor n/2 \rfloor}+4 \cdot \bar{C} \cdot a^{\frac{1+\epsilon}{2+\epsilon}}_{\lfloor n/2 \rfloor}+4\cdot \bar{C} \cdot a_{\lfloor n/2 \rfloor}^{\frac{1}{2+\epsilon}}+2\cdot \bar{C} \cdot a_{\lfloor n/2 \rfloor}^{\frac{2}{2+\epsilon}}+2\cdot \bar{C} \cdot a_{\lfloor n/2 \rfloor}^{\frac{\epsilon}{2+\epsilon}}).
 \end{equation}

 Let $g(x):=2+4\cdot \bar{C} \cdot x^{\frac{1+\epsilon}{2+\epsilon}-1}+4\cdot \bar{C} \cdot x^{\frac{1}{2+\epsilon}-1}+2\cdot \bar{C} \cdot x^{\frac{2}{2+\epsilon}-1}+2\cdot \bar{C} \cdot x^{\frac{\epsilon}{2+\epsilon}-1}$, then
  \[a_n \le a_{\lfloor n/2 \rfloor} \cdot [1/2+o(1)]^{\frac{2+\epsilon}{2}} \cdot g(a_{\lfloor n/2 \rfloor})  .\]

 There is $x_0$ such that for all $x\ge x_0$, $g(x) \approx 2$.
 
 Since $o(1) \to 0$ as $n \to \infty$, then there is $N$ such that for all $n \ge N$, 
 \[[1/2+o(1)]^{\frac{2+\epsilon}{2}} \approx (1/2)^{\frac{2+\epsilon}{2}}< 1/2.\]

 Then we can choose large $x_0, N$ such that for any $n\ge N, x\ge x_0$, \[[1/2+o(1)]^{\frac{2+\epsilon}{2}}\cdot g(x)<1.\]

 Let $b_n=\max(a_n, x_0)$, then for any $ n\ge N $, (\ref{recurrencerelation}) becomes:
 \[a_n \le  [1/2+o(1)]^{\frac{2+\epsilon}{2}} \cdot (2b_{\lfloor n/2\rfloor}+4\cdot \bar{C} \cdot b^{\frac{1+\epsilon}{2+\epsilon}}_{\lfloor n/2 \rfloor}+4\cdot \bar{C} \cdot b_{\lfloor n/2 \rfloor}^{\frac{1}{2+\epsilon}}+2\cdot \bar{C} \cdot b_{\lfloor n/2 \rfloor}^{\frac{2}{2+\epsilon}}+2\cdot \bar{C} \cdot b_{\lfloor n/2 \rfloor}^{\frac{\epsilon}{2+\epsilon}})< b_{\lfloor n/2 \rfloor}.\]

 Therefore, $b_n \le b_{\lfloor n/2 \rfloor}$ for any $n\ge N $. Furthermore, for any $n \ge 1$,
 \[\sup_{m\ge 1}\frac{\mathbb{E} (|\sum_{i=m}^{ m+n-1} X_i|^{2+\epsilon})}{n^{\frac{2+\epsilon}{2}}} \le a_n \le b_n \le \max(b_1,b_2,\cdots, b_N) < \infty.\]

Second, apply the inequality above to the Theorem B in \cite{S2}, that is, suppose  $(X_i)_{i \ge 1}$ have finite variances, zero means, and 
\[\sup_{m}\mathbb{E}(|\sum_{i=m+1}^{m+n}X_i|^{2+\epsilon} )\precsim_{\epsilon} n^{\frac{2+\epsilon}{2}},\]
then
\[\mathbb{E}(\max_{1 \le k\le n}|\sum_{i=m+1}^{m+k}X_i|^{2+\epsilon})\precsim_{\epsilon} n^{\frac{2+\epsilon}{2}} \text{ for all } m,n \in \mathbb{N},\]
where the constant in $\precsim_{\epsilon}$ does not depend on $m,n$. So we obtain our desired maximal inequality.
\end{proof}

 To find the desired Gaussian vectors in the definition of the VASIP, Berkes and Philipp  \cite{BP} gave a criterion:

 \begin{theorem}[See \cite{BP}]\label{forwardVASIP}\ \par
 Given a probability space $(X, \mathcal{B}, \mu)$, let $(Y_k)_{k\ge 1}$ be a sequence of random vectors in $\mathbb{R}^d$, adapted to an increasing filtration $(\mathcal{G}_k)_{k\ge1}$, that is, $Y_k$ is $\mathcal{G}_k$-measurable. Let $(H_k)_{k\ge 1}$ be a family of positive semi-definite $d \times d$  matrices. Assume that $\mu_k$ is a Gaussian distribution with a characteristic function $e^{-\frac{1}{2}\langle u, H_k\cdot u\rangle}$. Suppose that there are some non-negative numbers $T_k \ge 10^8d, \lambda_k, \delta_k$ such that for any $u \in \mathbb{R}^d$ with $|u|\le T_k$:
 \[\mathbb{E}|\mathbb{E}[\exp(i\langle u, Y_k\rangle)|\mathcal{G}_{k-1}]-\exp(-\tfrac{u^T\cdot H_k\cdot u}{2})|\le \lambda_k,\]
\[\mu_k\{u:|u|\ge T_k/4\}\le \delta_k.\]

 Then without changing its distribution we can define $(Y_k)_{k\ge 1}$ on a richer probability space together with a family of independent Gaussian vectors $(G_k)_{k\ge 1}$ whose distributions are $(\mu_k)_{k\ge 1}$ and
 \[\tilde{P}(|Y_k-G_k| \ge \alpha_k)\le \alpha_k,\]
 where $\alpha_1=1$, $\alpha_k:=16d \cdot \frac{\log T_k}{T_k}+4 \lambda_k^{1/2} \cdot T_k^d + \delta_k, k \ge 2$, $\tilde{P}$ is the probability w.r.t. the richer probability space.

 In particularly, if  $\sum_{k \ge 1} \alpha_k < \infty $, then almost surely,
 \[\textstyle\sum_{k \ge 1} |Y_k-G_k| <\infty. \]
 \end{theorem}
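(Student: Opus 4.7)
The plan is to follow the classical Berkes--Philipp quantile-coupling approach: at each step $k$, given the filtration $\mathcal{G}_{k-1}$, the regular conditional law of $X_k$ is coupled with the target Gaussian $\mu_k$ using a fresh randomizer that is independent of the past. The $L^1$ closeness of the conditional characteristic function to $\hat\mu_k$ on the ball $|u|\le T_k$ translates into a high-probability $\alpha_k$-coupling of $X_k$ with a Gaussian $G_k$ of the prescribed law $\mu_k$.

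First I would convert the characteristic-function hypothesis into a bound on the L\'evy--Prokhorov distance $\pi(\cdot,\cdot)$ between the regular conditional law $F_k(\cdot\mid \mathcal{G}_{k-1})$ and $\mu_k$. The tool is the $d$-dimensional Esseen smoothing inequality: for any two Borel probabilities $F,G$ on $\mathbb{R}^d$ and any $T>0$,
\[
\pi(F,G) \precsim \int_{|u|\le T}|\hat F(u)-\hat G(u)|\,du + \frac{d\log T}{T} + G(\{|x|>T/4\}).
\]
Applying this with $T=T_k$, $F=F_k(\cdot\mid\mathcal{G}_{k-1})$, $G=\mu_k$, combining the hypothesis $\mathbb{E}|\hat F_k(u)-\hat\mu_k(u)|\le \lambda_k$ on $|u|\le T_k$ with $\mu_k(\{|x|>T_k/4\})\le \delta_k$, and using a Markov step to convert the $L^1$ expectation bound into a conditional high-probability bound, yields a conditional Prokhorov estimate of order $\lambda_k^{1/2} T_k^d + d\log T_k/T_k + \delta_k$, which matches $\alpha_k$ up to the numerical constants in the statement.

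Next I would realize the coupling in a measurable and independence-preserving manner. The probability space is enriched recursively: at step $k$, one adjoins an independent pair of $U(0,1)$ random variables $(U_k,V_k)$ independent of everything produced so far. Using $U_k$ together with the $\mathcal{G}_{k-1}$-measurable parameter $H_k$, the vector $G_k$ is generated via a Borel inverse-CDF transform so that $G_k\sim \mu_k$ and $G_k$ is independent of $\mathcal{G}_{k-1}$ and of $G_1,\dots,G_{k-1}$. Using $V_k$, a Strassen coupling witnessing the Prokhorov estimate for the pair $(X_k,G_k)$ conditional on $\mathcal{G}_{k-1}$ is realized in a jointly Borel way; this upgrades the conditional Prokhorov bound to $P(|X_k-G_k|\ge \alpha_k\mid \mathcal{G}_{k-1})\le \alpha_k$, hence to the unconditional bound $P(|X_k-G_k|\ge\alpha_k)\le \alpha_k$. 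The almost sure summability conclusion is then an immediate application of Borel--Cantelli whenever $\sum_k \alpha_k<\infty$.

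The main obstacle is the measurable simultaneous realization in the previous step: one needs (a) $G_k$ to have exactly the law $\mu_k$, (b) $G_k$ to be genuinely independent of $\mathcal{G}_{k-1}$ and of the previously constructed Gaussians, and (c) the Strassen coupling between the joint conditional law of $(X_k,G_k)$ and the optimal Prokhorov coupling to be Borel in the underlying randomizer. The standard remedy is to split the external uniform randomness into two independent pieces as above, use one for the marginal and the other for the coupling, and invoke a Borel measurable selection on Polish spaces; the explicit numerical constants $16d$ and $4$ appearing in $\alpha_k$ then fall out of carefully tracking the Esseen smoothing bound.
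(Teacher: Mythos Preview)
The paper does not prove this theorem; it is quoted from Berkes--Philipp \cite{BP} and used as a black box. The only thing the paper supplies is, inside the proof of Lemma~\ref{reverseVASIP}, a brief recollection of the BP construction: $G_1$ is arbitrary, and inductively one partitions $\sigma(G_1,\dots,G_{k-1})$ into countably many ``cylinders'', constructs $G_k$ locally on each cylinder (extending by a unit interval to kill atoms), and glues; the final space is $\Omega\times I^{\mathbb{N}}$. So your overall two-stage plan (Esseen-type smoothing to pass from the conditional characteristic-function bound to a Prokhorov bound, then Strassen to realize a close coupling) is exactly the Berkes--Philipp strategy and matches what the paper relies on.

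There is, however, a genuine gap in the way you implement the coupling step. You propose to generate $G_k$ from an independent $U_k$ alone (so $G_k\sim\mu_k$ is independent of $\mathcal{G}_{k-1}$ and of $X_k$), and then use a second randomizer $V_k$ to ``realize a Strassen coupling for the pair $(X_k,G_k)$''. Once $G_k$ is already defined as a function of $U_k$ only, its joint law with $X_k$ is the product law and no further randomization with $V_k$ can make $|X_k-G_k|$ small; Strassen produces a joint distribution with the correct marginals, not a mechanism for bringing two already-defined independent variables close. In the actual BP argument $G_k$ must be built as a function of $X_k$ (more precisely of the conditional law of $X_k$ given the past) \emph{and} the external randomizer simultaneously, and the independence of $G_k$ from $G_1,\dots,G_{k-1}$ is then secured not by a separate marginal randomizer but by the cylinder-partition device the paper recalls: on each atom of a countable partition of $\sigma(G_1,\dots,G_{k-1})$ one applies Strassen so that the conditional law of $G_k$ on that atom is exactly $\mu_k$, which forces independence. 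Your sketch should replace the $(U_k,V_k)$ split by this single-step, cylinder-wise construction.
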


Berkes and Philipp constructed Gaussian vectors inductively, which relies heavily on the increasing filtration $(\mathcal{G}_k)_{k\ge1}$. However, our filtration $(\mathcal{E}_k)_{k \ge 1} $ is decreasing. We will derive the following lemma for this case, which plays a crucial role in our proof of the Theorem \ref{thm3}.

\begin{lemma}[A VASIP criterion]\label{reverseVASIP}\ \par
  Given a probability space $(X, \mathcal{B}, \mu)$, let $(Y_k)_{k\ge 1}$ be a sequence of random vectors in $\mathbb{R}^d$, $(\mathcal{F}_k)_{k\ge1}$ be a decreasing filtration, $Y_k$ be $\mathcal{F}_k$-measurable, $(H_k)_{k\ge 1}$ be a family of positive semi-definite $d \times d$  matrices. Assume that $\mu_k$ is a Gaussian distribution with a characteristic function $e^{-\frac{1}{2}u^T\cdot H_k\cdot u}$. Suppose that there are some non-negative numbers $T_k \ge 10^8d, \lambda_k, \delta_k$, such that for any $u \in \mathbb{R}^d$ with $|u|\le T_k$:
 \[\mathbb{E}|\mathbb{E}[\exp(iu^T\cdot Y_k)|\mathcal{F}_{k+1}]-\exp(-\tfrac{u^T\cdot H_k\cdot u}{2})|\le \lambda_k,\]
 \[\mu_k\{u:|u|\ge T_k/4\}\le \delta_k.\]

 Then without changing its distribution we can define $(Y_k)_{k\ge 1}$ on a richer probability space together with a family of independent Gaussian vectors $(G_k)_{k\ge 1}$ whose distributions are $(\mu_k)_{k\ge 1}$ and
 \[\tilde{P}(|Y_k-G_k| \ge \alpha_k)\le \alpha_k,\]
where $\alpha_k:=16d \cdot \frac{\log T_k}{T_k}+4 \lambda_k^{1/2} \cdot T_k^d + \delta_k$ for all $k \ge 1$, $\tilde{P}$ is the probability w.r.t. the richer probability space.

 In particularly, if  $\sum_{k \ge 1} \alpha_k < \infty $, then almost surely,
 \[\textstyle\sum_{k \ge 1} |Y_k-G_k| <\infty. \]
 \end{lemma}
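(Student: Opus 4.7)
The plan is to adapt the Berkes--Philipp argument underlying Theorem \ref{forwardVASIP} to the decreasing-filtration setting, exploiting the fact that its key coupling step is indifferent to whether the filtration is increasing or decreasing. The Berkes--Philipp proof is really an induction powered by a Strassen--Dudley-type conditional coupling lemma: given a random vector $Y$, a sub-$\sigma$-algebra $\mathcal{G}$, a target Gaussian law $\mu$ with $\mu\{u:|u|\ge T/4\}\le\delta$, and $\mathbb{E}|\mathbb{E}(e^{iu^T Y}|\mathcal{G})-e^{-\frac{1}{2}u^T H u}|\le\lambda$ for all $|u|\le T$, one can enlarge the probability space by one auxiliary variable $\xi$ independent of $\sigma(Y,\mathcal{G})$ and construct $G=g(Y,\mathcal{G},\xi)$ with $G\sim\mu$, $G\perp\mathcal{G}$, and $P(|Y-G|\ge\alpha)\le\alpha$ for $\alpha=16d\log T/T+4\lambda^{1/2}T^d+\delta$. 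This lemma only sees the pair $(Y,\mathcal{G})$ and its conditional characteristic function; it does not care which way any ambient filtration runs.

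Concretely, I would enrich the probability space once, adjoining a sequence $(\xi_k)_{k\ge 1}$ of mutually independent auxiliary variables that is also independent of $\mathcal{F}_1$, and then apply the coupling lemma once for each $k$ with $Y=Y_k$, $\mathcal{G}=\mathcal{F}_{k+1}$, and fresh randomness $\xi_k$. This produces, for every $k\ge 1$, a vector $G_k$ which is $\sigma(\mathcal{F}_k,\xi_k)$-measurable, has $G_k\sim \mu_k$, satisfies $G_k\perp \mathcal{F}_{k+1}$, and obeys $P(|Y_k-G_k|\ge \alpha_k)\le \alpha_k$. Unlike the forward Berkes--Philipp induction, each $G_k$ is constructed separately, without reference to the previously built $G_1,\ldots,G_{k-1}$.

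The only genuinely new ingredient is verifying mutual independence of $(G_k)_{k\ge 1}$, and this is where the decreasing structure of $(\mathcal{F}_k)$ is crucial. The reverse-monotonicity $\mathcal{F}_{k'}\subset \mathcal{F}_{k+1}$ for $k<k'$ means that $G_{k'}$, a priori $\sigma(\mathcal{F}_{k'},\xi_{k'})$-measurable, actually lies in $\sigma(\mathcal{F}_{k+1},\xi_{k'})$. Since $G_k\perp\mathcal{F}_{k+1}$ by construction and $\xi_{k'}$ is independent of $(\mathcal{F}_1,\xi_k)$, a direct factoring argument (first condition on $(\mathcal{F}_k,\xi_k)$ to peel off $\xi_{k'}$, then invoke $G_k\perp\mathcal{F}_{k+1}$ to separate $G_k$ from any $\mathcal{F}_{k+1}$-event) gives $G_k\perp \sigma(\mathcal{F}_{k+1},\xi_{k+1},\xi_{k+2},\ldots)$, and hence $G_k\perp(G_{k+1},G_{k+2},\ldots)$. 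Iterating over $k$ yields mutual independence of the $G_k$'s, and Borel--Cantelli applied to $\sum_k\alpha_k<\infty$ then gives the final almost-sure finiteness statement. The main obstacle I expect is precisely this independence bookkeeping: in the forward induction, independence of the new $G_k$ from $(G_1,\ldots,G_{k-1})$ is hardwired by introducing fresh randomness at each step, whereas here it must be extracted after the fact from the nesting $\mathcal{F}_{k'}\subset\mathcal{F}_{k+1}$ together with mutual independence of the $\xi_k$'s, which requires a careful check that the coupling lemma really delivers $G_k$ \emph{unconditionally} independent of $\mathcal{F}_{k+1}$ (not merely with the correct conditional distribution, an equivalent condition that is easy to conflate).
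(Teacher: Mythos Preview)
Your approach is correct but genuinely different from the paper's. You open up the Berkes--Philipp machinery, isolate the single-step Strassen--Dudley coupling lemma (producing $G$ with $G\sim\mu$, $G\perp\mathcal{G}$, and $P(|Y-G|\ge\alpha)\le\alpha$), apply it once per $k$ with fresh independent randomness $\xi_k$, and then deduce mutual independence of the $G_k$'s from the nesting $\mathcal{F}_{k'}\subset\mathcal{F}_{k+1}$ together with $G_k\perp\mathcal{F}_{k+1}$ and $(\xi_j)_{j>k}\perp\sigma(\mathcal{F}_k,\xi_k)$. The paper instead treats Theorem~\ref{forwardVASIP} as a black box: for each $n$ it reverses the finite segment $(\mathcal{F}_1,\ldots,\mathcal{F}_{n+1})$ into an increasing filtration, applies the forward result to obtain a triangular array $(G^n_k)_{1\le k\le n}$, then passes to weak subsequential limits via tightness and a diagonal argument, and finally invokes the transfer lemma (Lemma~\ref{transfer}) to realize the limiting joint law on an extended space. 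Your route is more direct and yields a single explicit enlargement $\Omega\times[0,1]^{\mathbb{N}}$, but it requires extracting the coupling lemma in the precise form with $G\perp\mathcal{G}$, which is implicit rather than stated in Berkes--Philipp (their induction only records independence from the previously built $G_1,\ldots,G_{k-1}$). The paper's route avoids reopening that argument at the cost of the heavier tightness/weak-limit/transfer apparatus. The caveat you already flagged --- that the coupling lemma must deliver unconditional independence $G_k\perp\mathcal{F}_{k+1}$, not just the correct conditional law --- is the one place where your argument needs a sentence of justification, but it is true and your independence bookkeeping then goes through as sketched.
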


 \begin{proof}
Before proving this lemma, let's recall the procedure of how to construct Gaussian vectors in \cite{BP}: $G_1$ is constructed with the distribution $\mu_1$, extend the probability space $\Omega$ to $\Omega \times I$ by multiplying an unit interval $I$ endowed with the Lebesgue measure if the original probability space has atoms. Inductively, assume that $G_1, G_2, \cdots, G_{k-1}$ have been constructed, partition the extended probability space such that it is a union of countably many $\sigma(G_1, \cdots, G_{k-1})$-measurable sets. Locally, on each of these sets, construct $G_k$, and extend the extended probability space by multiplying a new unit interval. Obtain global $G_k$ by gluing all local $G_k$. The final extended probability space is $\Omega \times I^{\mathbb{N}}$.

To prove our result, let $I_n=[n,n+1], n \in \mathbb{Z}$, we will construct a triangular array of Gaussian vectors $(G^n_k)_{1 \le k \le n, n \ge 1}$ together with extended probability spaces $(\Omega_n)_{n \ge 1}$:

For the 1-st row of the array, let $G^1_1:=G_1, \mu_1=L(G_1^1),\Omega_1:=\Omega \times I_1$ (we denote its probability by $\tilde{P}$) as in \cite{BP}. Assume that the previous $(n-1)$ rows of the array are done: the extended probability space $\Omega_{n-1}$ (still denote its probability by $\tilde{P}$) and $(G^{n-1}_k)_{ k \le n-1}$ are constructed.

For the $n$-th row of the array, consider the increasing filtration $(\mathcal{F}_{n+2-k})_{1 \le k \le n+1}$. By Theorem \ref{forwardVASIP}, we can construct $G^n_{n+1}, G^n_{1}, \cdots, G^n_{n}$ and a probability space $\Omega_{n-1} \times I_n^{\mathbb{N}}$ (still denote its probability by $\tilde{P}$) such that
\[\tilde{P}(|Y_k-G^n_k| \ge \alpha_k)\le \alpha_k, \mu_k=L(G^n_k), 1\le k \le n, \]
 where $\alpha_{n+1}=1$, $\alpha_k=16d \cdot \frac{\log T_k}{T_k}+4 \lambda_k^{1/2} \cdot T_k^d + \delta_k, 1\le k \le n$.

  Since $G^n_{n+1}$ and $\alpha_{n+1}$ do not make contributions, drop them. Then we have $ G^n_{n}, \cdots, G^n_{1}$ and a probability space $\Omega_{n-1} \times I_n^{\mathbb{N}}$ such that

\[\tilde{P}(|Y_k-G^n_k| \ge \alpha_k)\le \alpha_k,\]
 where $\alpha_k=16d \cdot \frac{\log T_k}{T_k}+4 \lambda_k^{1/2} \cdot T_k^d + \delta_k, 1\le k \le n$.

This procedure ends up with a large extended probability space $\Omega \times \prod_{i \ge 1} I_i^{\mathbb{N}}$ (still denote its probability by $\tilde{P}$) and a triangular array of Gaussian vectors $(G_k^n)_{1 \le k\le n, n\ge 1}$ such that
\[\tilde{P}(|Y_k-G^n_k| \ge \alpha_k)\le \alpha_k,\]
\[\mu_k=L(G_k^n) \text{ for all } k, n \ge 1,\]
where $\alpha_k=16d \cdot \frac{\log T_k}{T_k}+4 \lambda_k^{1/2} \cdot T_k^d + \delta_k, k \ge 1$.

 Consider a new triangular array $(Y_k, G_k^n)_{1\le k\le n, n \ge 1}$, we will construct the desired $(G_k)_{k \ge 1}$ inductively:
 
 Start with the $1$-st step, since $(G_1^n)_{n \ge 1}$ have the same distribution, hence $(Y_1, G_1^n)_{n \ge 1}$ is tight. Then along a subsequence, there is a weak limit $(Y_1', G_1')$ such that
\[(Y_1, G_1^n) \to_{d} (Y_1', G_1'),\]
\[\tilde{P}(|Y'_1-G'_1| \ge \alpha_1)\le \alpha_1.\]

Assume that the ($m-1$)-th step is done, that is, $(Y_1, G_1^n, Y_2, G_2^n, \cdots, Y_{m-1}, G_{m-1}^n )_{n \ge m-1}$ has a subsequence with a weak limit $(Y_1', G_1', Y_2', G_2', \cdots, Y_{m-1}', G_{m-1}')$, and an extended probability space $\prod_{-(m-1) \le i \le -1}I_i \times \Omega \times \prod_{i \ge 1} I_i^{\mathbb{N}} $ (still denote its probability by $\tilde{P}$) such that
\[(Y_1, G_1^n, Y_2, G_2^n, \cdots, Y_{m-1}, G_{m-1}^n ) \to_{d} (Y_1', G_1', Y_2', G_2', \cdots, Y_{m-1}', G_{m-1}'),\]
\[\tilde{P}(|Y'_k-G'_k| \ge \alpha_k)\le \alpha_k \text{ for any } k \le m-1.\]

For the $m$-th step, since $(Y_1, G_1^n, Y_2, G_2^n, \cdots, Y_{m}, G_{m}^n )_{n \ge m}$ is tight, then along a subsequence of the subsequence in the ($m-1$)-th step, there is a weak limit 
\[(Y_1, G_1^n, Y_2, G_2^n, \cdots, Y_{m}, G_{m}^n )\to_{d} (\bar{Y}_1', \bar{G}_1', \cdots, \bar{Y}_{m}', \bar{G}_{m}').\]

Compare with the weak limit in the ($m-1$)-th step, we have 
\[(Y_1', G_1', Y_2', G_2', \cdots, Y_{m-1}', G_{m-1}') =_d (\bar{Y}_1', \bar{G}_1', \bar{Y}_2', \bar{G}_2',  \cdots, \bar{Y}_{m-1}', \bar{G}_{m-1}'). \]

By Lemma \ref{transfer}, there is $(Y_m', G_m')$ such that
\[(Y_1', G_1', Y_2', G_2', \cdots, Y_{m-1}', G_{m-1}', Y_m', G_m') =_d (\bar{Y}_1', \bar{G}_1', \bar{Y}_2', \bar{G}_2',  \cdots, \bar{Y}_{m-1}', \bar{G}_{m-1}', \bar{Y}_{m}', \bar{G}_{m}').\]

Meanwhile, we have an extended probability space $\prod_{-m \le i \le -1}I_i \times \Omega \times \prod_{i \ge 1} I_i^{\mathbb{N}} $ (still denote its probability by $\tilde{P}$).

Therefore, in this $m$-th step, we have a weak convergence along a subsequence:
\[(Y_1, G_1^n, Y_2, G_2^n, \cdots, Y_{m}, G_{m}^n ) \to_{d} (Y_1', G_1', Y_2', G_2', \cdots, Y_{m}', G_{m}').\]

Then by the diagonal argument, there is a subsequence (does not depend on $m$), such that for any $m \ge 1$,
\[(Y_1, G_1^n, Y_2, G_2^n, \cdots, Y_{m}, G_{m}^n ) \to_{d} (Y_1', G_1', Y_2', G_2', \cdots, Y_{m}', G_{m}'),\]
\[\tilde{P}(|Y'_k-G'_k| \ge \alpha_k)\le \alpha_k \text{ for any } k \le m.\]

Therefore, for any $k \ge 1$,
\[L(Y'_1, \cdots, Y'_k)=L(Y_1, \cdots, Y_k),\]
\[L(G'_1, \cdots, G'_k)= \bigotimes_{1 \le i \le k} \mu_i.\]

They imply that $(G'_i)_{i \ge 1}$ are independent and $(Y'_i)_{i \ge 1}\stackrel{d}{=}(Y_i)_{i \ge 1}$. The extended probability space becomes $\prod_{ i \le -1}I_i \times \Omega \times \prod_{i \ge 1} I_i^{\mathbb{N}} $ (still denote its probability by $\tilde{P}$).

Using Lemma \ref{transfer} again, there are $(G_i)_{i \ge 1}$ and an extended probability space $\prod_{ i \le -1}I_i  \times I_0 \times \Omega \times \prod_{i \ge 1} I_i^{\mathbb{N}} $ (still denote its probability by $\tilde{P}$) such that
\[((Y'_i)_{i \ge 1}, (G'_i)_{i \ge 1} ){=_d}((Y_i)_{i \ge 1}, (G_i)_{i \ge 1}).\]

Therefore, for any $k \ge 1$,
\[\tilde{P}(|Y_k-G_k| \ge \alpha_k)\le \alpha_k,\]
\[L(G_1, \cdots, G_k)= \bigotimes_{1 \le i \le k} \mu_i,\]
 where $\alpha_k:=16d \cdot \frac{\log T_k}{T_k}+4 \lambda_k^{1/2} \cdot T_k^d + \delta_k, k \ge 1$.
\end{proof}

With all lemmas above, we are ready to prove Theorem \ref{thm3}.

\subsection{Proof of Theorem \ref{thm3}}\label{sub42}
\subsubsection*{Introduction of the blocks}
We will construct consecutive blocks $\{I_n, n\ge 1\}$ in $\mathbb{N}$ without gaps between them: let $I_n$ be the interval in $\mathbb{N}$ such that $|I_n|=\lfloor n^c \rfloor, c>0$. So $ \bigcup_{i \ge 1}I_i=\mathbb{N} $. Let $a \in (1/2,1), c_n: =\lfloor n^{c(1-a)} \rfloor$. Construct consecutive blocks $\{I_{n,i}, 1 \le i \le c_n+1\}$ in $I_n$ such that: $|I_{n,i}|=\lfloor n^{ca} \rfloor, 1 \le i \le c_n$, the first block $I_{n,1}$ contains the least number of $I_n$, the last block $I_{n,c_n+1}:= I_n \setminus \bigcup_{1 \le i \le c_n} I_{n,i}$ contains the largest number of $I_n$. So $|I_{n,c_n+1}|\le 2\lfloor n^{ca} \rfloor$ and $\bigcup_{1 \le i \le c_n+1} I_{n,i}=I_n$. Let $a_n: =\sum_{i \le n} |I_i| \approx n^{c+1}$ and
\[\overline{X}_n:= \sum_{i \in I_n} X_i, Y_n:= \frac{\overline{X}_n}{\sqrt{b_n}}, H_n:=\frac{\mathbb{E}(\overline{X}_n\cdot \overline{X}_n^T)}{b_n}, \mathcal{F}_n:= \mathcal{E}_{a_{n-1}+1},\]
\[\text{ where }b_n:=\lambda(\sigma_{a_n}^2) \succsim n^{\gamma(c+1)}, \overline{X}_n, Y_n \text{ are } \mathcal{F}_n \text{-measurable},\]
\[\overline{X}_{n, i}:= \sum_{k \in I_{n,i}} X_k, \mathcal{F}_{n,i}: =\mathcal{E}_{a_{n-1}+\sum_{k \le i-1} |I_{n,k}|+1}, \overline{X}_{n,i} \text{ is } \mathcal{F}_{n,i} \text{-measurable},\]
\[ T_n:=n^{\kappa},  \delta_n:= \mu_n\{u:|u|\ge T_n/4\},  \]
where $\mu_n$  is a zero-mean Gaussian distribution with a variance matrix $H_n$, and $\kappa>0, c, a$ can be found in Appendix, Lemma \ref{para} and in its proof.
\subsubsection*{An estimate of \mathversion{bold}$\mathbb{E}|\mathbb{E}[\exp(iu^T\cdot Y_n)|\mathcal{F}_{n+1}]-\exp(-\tfrac{u^T\cdot H_n \cdot u}{2}) |$}
We are going to apply Lemma \ref{reverseVASIP} to $Y_n, \mathcal{F}_n, H_n, T_n, \mu_n, \delta_n$ and estimate
\begin{align}
   &\mathbb{E}|\mathbb{E}[\exp(iu^T\cdot Y_n)|\mathcal{F}_{n+1}]-\exp(-\tfrac{u^T\cdot H_n \cdot u}{2}) |\nonumber \\
   &= \mathbb{E}|\mathbb{E}_{a_n+1}\exp(iu^T\cdot \frac{\overline{X}_n}{\sqrt{b_n}})-\exp(-\frac{u^T \cdot\mathbb{E}(\overline{X}_n\cdot \overline{X}_n^T)\cdot u}{2b_n}  )|. \label{conditionalclt}
\end{align} 

First, we note that
\begin{align*}
    &\mathbb{E}(\overline{X}_n\cdot \overline{X}_n^T)= \sum_{i=1}^{ c_n+1} \mathbb{E} (\overline{X}_{n,i} \cdot \overline{X}_{n,i}^T) + \sum_{1\le i <j \le c_n+1} \mathbb{E} (\overline{X}_{n,i} \cdot \overline{X}_{n,j}^T)+ \sum_{1\le i <j \le c_n+1} \mathbb{E} (\overline{X}_{n,i} \cdot \overline{X}_{n,j}^T)^T\\
    &=\sum_{i=1}^{ c_n+1} \mathbb{E} (\overline{X}_{n,i} \cdot \overline{X}_{n,i}^T) + \sum_{i=1}^{c_n} \mathbb{E}[\overline{X}_{n,i} \cdot (\sum_{i <j \le c_n+1}\overline{X}_{n,j}^T)]+ \sum_{i=1}^{ c_n} \mathbb{E} [\overline{X}_{n,i} \cdot (\sum_{i <j \le c_n+1}\overline{X}_{n,j}^T)]^T.
\end{align*}

By Lemma \ref{neighborbound}, the equality above becomes
\begin{align*}
   &=\sum_{i=1}^{ c_n+1} \mathbb{E} (\overline{X}_{n,i} \cdot \overline{X}_{n,i}^T)+ \sum_{i=1}^{c_n} O(|I_n|^{\max(3-1/\alpha,0)})=\sum_{i=1}^{c_n+1} \mathbb{E} (\overline{X}_{n,i} \cdot \overline{X}_{n,i}^T)+ c_n \cdot O(|I_n|^{\max(3-1/\alpha,0)}) \\
   &=\sum_{i=1}^{c_n+1}\mathbb{E} (\overline{X}_{n,i} \cdot \overline{X}_{n,i}^T)+O(n^{c(1-a)+c\max(3-1/\alpha,0)}).
\end{align*}

Then we have the following estimate: 
\begin{align}
   &\mathbb{E}|\mathbb{E}_{a_n+1}\exp(iu^T\cdot \frac{\overline{X}_n}{\sqrt{b_n}})-\exp(-u^T\cdot \frac{\mathbb{E}(\overline{X}_n\cdot \overline{X}_n^T)}{2b_n} \cdot u )| \nonumber\\
   &\le \mathbb{E}|\mathbb{E}_{a_n+1}\exp(iu^T\cdot \frac{\overline{X}_n}{\sqrt{b_n}})-\exp(-u^T\cdot \frac{\sum_{i=1}^{c_n+1}\mathbb{E} (\overline{X}_{n,i} \cdot \overline{X}_{n,i}^T)}{2b_n} \cdot u)| \nonumber\\
   &\quad+|\exp(-u^T\cdot \frac{\sum_{i=1}^{c_n+1}\mathbb{E} (\overline{X}_{n,i} \cdot \overline{X}_{n,i}^T)}{2b_n} \cdot u)-\exp(-u^T\cdot \frac{\mathbb{E}(\overline{X}_n\cdot \overline{X}_n^T)}{2b_n} \cdot u )|\nonumber\\
   &\precsim \mathbb{E}|\mathbb{E}_{a_n+1}\exp(iu^T\cdot \frac{\overline{X}_n}{\sqrt{b_n}})-\exp(-u^T\cdot \frac{\sum_{i=1}^{c_n+1}\mathbb{E} (\overline{X}_{n,i} \cdot \overline{X}_{n,i}^T)}{2b_n} \cdot u)|\nonumber\\
   &\quad+ \frac{O(|u|^2\cdot n^{c(1-a)+c\max(3-1/\alpha,0)})}{b_n}. \label{24}
\end{align}

Since 
\begin{align*}
    &\exp(iu^T\cdot \frac{\overline{X}_n}{\sqrt{b_n}})-\exp(-u^T\cdot \frac{\sum_{i=1}^{ c_n+1}\mathbb{E} (\overline{X}_{n,i} \cdot \overline{X}_{n,i}^T)}{2b_n} \cdot u)\\
    &=\sum_{k=0}^{c_n} [\exp(-\frac{\sum_{i=1}^{k}\mathbb{E} (u^T \cdot \overline{X}_{n,i})^2}{2b_n} ) \cdot \exp(i \frac{\sum_{i=k+1}^{ c_n+1}u^T \cdot \overline{X}_{n,i}}{\sqrt{b_n}})\\
    &\quad -\exp(- \frac{\sum_{i=1}^{ k+1}\mathbb{E} (u^T \cdot \overline{X}_{n,i})^2}{2b_n} ) \cdot \exp(i\cdot \frac{\sum_{i=2+k}^{c_n+1}u^T \cdot \overline{X}_{n,i}}{\sqrt{b_n}})]\\
    &=\sum_{k=0}^{c_n} \{\exp(- \frac{\sum_{i=1}^{ k}\mathbb{E} (u^T \cdot \overline{X}_{n,i})^2}{2b_n} ) \cdot [\exp(i \frac{u^T \cdot \overline{X}_{n,k+1}}{\sqrt{b_n}})\\
    &\quad -\exp(-\frac{\mathbb{E} (u^T \cdot \overline{X}_{n,k+1})^2}{2b_n} )] \cdot \exp(i \frac{\sum_{i=k+2}^{ c_n+1}u^T \cdot \overline{X}_{n,i}}{\sqrt{b_n}})\},
\end{align*}
then (\ref{24}) becomes
\begin{align*}
   &= \mathbb{E}|\mathbb{E}_{a_n+1}\{\sum_{k=0}^{c_n} \exp(- \frac{\sum_{ i=1}^{ k}\mathbb{E} (u^T \cdot \overline{X}_{n,i})^2}{2b_n} ) \cdot [\exp(i \frac{u^T \cdot \overline{X}_{n,k+1}}{\sqrt{b_n}})  \\
   &\quad -\exp(-\frac{\mathbb{E} (u^T \cdot \overline{X}_{n,k+1})^2}{2b_n} )] \cdot \exp(i \frac{\sum_{i=k+2}^{c_n+1}u^T \cdot \overline{X}_{n,i}}{\sqrt{b_n}})\}| + \frac{O(|u|^2\cdot n^{c(1-a)+c\max(3-1/\alpha,0)})}{b_n}.
\end{align*}

Since $\exp(i \cdot \frac{\sum_{i=k+2}^{c_n+1}u^T \cdot \overline{X}_{n,i}}{\sqrt{b_n}})$ is $\mathcal{F}_{n,k+2}$-measurable, then the equality above becomes 
\begin{align*}
    &= \mathbb{E}|\mathbb{E}_{a_n+1}\big\{\sum_{k=0}^{ c_n} \exp(- \frac{\sum_{i=1}^{ k}\mathbb{E} (u^T \cdot \overline{X}_{n,i})^2}{2b_n} )\cdot \exp(i \cdot \frac{\sum_{i=k+2}^{ c_n+1}u^T \cdot \overline{X}_{n,i}}{\sqrt{b_n}}) \\
    &\quad \times \mathbb{E}[\exp(i \frac{u^T \cdot \overline{X}_{n,k+1}}{\sqrt{b_n}}) -\exp(- \frac{\mathbb{E} (u^T \cdot \overline{X}_{n,k+1})^2}{2b_n} )|\mathcal{F}_{n,k+2}]\big\}|  +\frac{O(|u|^2\cdot n^{c(1-a)+c\max(3-1/\alpha,0)})}{b_n}\\
    &\le \sum_{k=0}^{ c_n}\mathbb{E}| \mathbb{E}[\exp(i \frac{u^T \cdot \overline{X}_{n,k+1}}{\sqrt{b_n}}) -\exp(- \frac{\mathbb{E} (u^T \cdot \overline{X}_{n,k+1})^2}{2b_n} )|\mathcal{F}_{n,k+2}]|+ \frac{O(|u|^2\cdot n^{c(1-a)+c\max(3-1/\alpha,0)})}{b_n}.
\end{align*}

Using the Taylor expansion: for any $\epsilon_0 \in (0,\min(1, 2-\frac{2\alpha}{1-\alpha}))$,
\[e^{-x}=1-x+O(x^2),\]
\[e^{ix}=1+ix-x^2/2+x^2\cdot O(\min(|x|,1))=1+ix-x^2/2+O(|x|^{2+\epsilon_0}),\]
the inequality above becomes: 
\begin{align*}
    &=\sum_{ k=0}^{c_n} \mathbb{E}|\mathbb{E}\big\{[1+i\frac{u^T \cdot \overline{X}_{n,k+1}}{\sqrt{b_n}}-\frac{(u^T\cdot \overline{X}_{n,k+1})^2}{{2b_n}}+O(|\frac{u^T\cdot \overline{X}_{n,k+1}}{\sqrt{b_n}}|^{2+\epsilon_0})]|\mathcal{F}_{n,k+2}\big\}\\
    &\quad -\big\{1-\frac{\mathbb{E}[(u^T\cdot \overline{X}_{n,k+1})^2]}{2b_n}+O(|\frac{\mathbb{E}{[(u^T\cdot \overline{X}_{n,k+1})^2]}}{b_n}|^2)\big\}|+|u|^2\cdot \frac{O(n^{c(1-a)+c\max(3-1/\alpha,0)})}{b_n}\\
    & \le \sum_{0 \le k \le c_n} \big\{\frac{|u|}{\sqrt{b_n}}+ \frac{|u|^2}{2b_n}\cdot \mathbb{E}|\mathbb{E}\big\{[\overline{X}_{n,k+1}\cdot \overline{X}_{n,k+1}^T-\mathbb{E}(\overline{X}_{n,k+1}\cdot \overline{X}_{n,k+1}^T)]|\mathcal{F}_{n,k+2}\big\}|\\
    &\quad +  \frac{|u|^4\cdot|\mathbb{E}(\overline{X}_{n,k+1}\cdot \overline{X}_{n, k+1}^T)|^2}{b_n^2}+ \frac{|u|^{2+\epsilon_0} \cdot \mathbb{E}(|\overline{X}_{n,k+1}|^{2+\epsilon_0})}{b_n^{(2+\epsilon_0)/2}}\big\}+ \frac{O(|u|^2\cdot n^{c(1-a)+c\max(3-1/\alpha,0)})}{b_n}.
\end{align*}

Let $|u|\le T_n=n^{\kappa}$ and apply Lemmas \ref{sublinear}, \ref{conditionalbound}, \ref{max} to $\overline{X}_{n, k+1}$, Lemma \ref{conditionalbound2} to $\overline{X}_{n, k+1}$ and $ \mathcal{F}_{n,k+2}$, the inequality above becomes
\begin{align*}
    &\precsim \frac{n^{\kappa+c(1-a)}}{n^{\frac{\gamma(1+c)}{2}}}+\frac{n^{2\kappa+c(1-a)+ca\frac{\alpha}{1-\alpha}}}{n^{\gamma(1+c)}}+\frac{n^{c(1-a)+4\kappa+2ca}}{n^{2\gamma(1+c)}} +\frac{n^{\kappa(2+\epsilon_0)+c(1-a)+ca\frac{2+\epsilon_0}{2}}}{n^{\frac{\gamma(1+c)(2+\epsilon_0)}{2}}}\\
    &\quad+\frac{n^{2\kappa+c(1-a)+c\max(3-1/\alpha,0)}}{n^{\gamma(1+c)}}=:n^{-\kappa_1}+n^{-\kappa_2}+n^{-\kappa_3}+n^{-\kappa_4}+n^{-\kappa_5}.
\end{align*}
Here $\epsilon_0 <\min(1, 2-\frac{2\alpha}{1-\alpha})$. Let $v:=\min\{\kappa_1,\kappa_2,\kappa_3,\kappa_4,\kappa_5 \}$. Then when $|u|\le n^{\kappa}$,
\[\mathbb{E}|\mathbb{E}[\exp(iu^T\cdot Y_n)|\mathcal{F}_{n+1}]-\exp(-\tfrac{u^T\cdot H_n \cdot u}{2}) |\precsim n^{-v}.\]

\subsubsection*{Gaussian approximations for $(\overline{X}_n)_{n \ge 1}$}

From now on, we choose appropriate blocks $\{I_n: n \ge 1\}$ and $\{I_{n,i}, 1 \le i \le c_n+1\}$ (that is, appropriate constants $c,\gamma, \kappa, v, a$ and $\epsilon_0$) such that they satisfy
  \begin{equation}\label{gamma1}
\min(\kappa,v/2-d\kappa) >1,
\end{equation}
\begin{equation} \label{gamma2}
\gamma (c+1)/2>1+(c+1)/2-\min(\kappa, v/2-d \kappa),
 \end{equation}
 \begin{equation} \label{gamma3}
  c-\gamma(c+1)<0,
  \end{equation}
 \begin{equation} \label{gamma4}
   1+(c+1)(\max\{3-1/\alpha,0\}-\gamma)<0
  \end{equation}
  \begin{equation}\label{gamma7}
\gamma(c+1)(2+\epsilon_0)/2-c(1+\epsilon_0/2)>1.
 \end{equation}

There are many choices for these constants, for example,
\[\kappa=2, c= \max\big\{\frac{\epsilon_0a+(2+\epsilon_0)(8d+12)}{\epsilon_0(1-a)},  \frac{\max(3-1/\alpha,0)+\epsilon_0/(2+\epsilon_0)}{1-\max(3-1/\alpha,0)}\big\},\]
\[a= \max\big\{\frac{\epsilon_0+2 \alpha}{(1-\alpha)(2\epsilon_0+2)},\frac{2+2\epsilon_0}{3\epsilon_0+4},\frac{(2+\epsilon_0)(1+\max(3-1/\alpha, 0))-2}{2+2\epsilon_0}\big\},\]
\[\epsilon_0\text{ is any number in } (0,\min\{1, 2-\frac{2\alpha}{1-\alpha}\}),\]
\[\gamma \text{ is any number in } (\frac{c}{c+1}+ \frac{2}{(c+1)(2+\epsilon_0)},1).\]

With these blocks, we apply Lemma \ref{reverseVASIP} and have the following approximation.
\begin{lemma}\label{gassuanappr}
There are independent Gaussian vectors $G_n''$ with variance matrices $E(\overline{X}_n\cdot \overline{X}_n^T)$ such that
\[\sum_{i=1}^{n} \overline{X}_i-\sum_{i=1}^{n}G''_i \precsim \lambda(\sigma^2_{a_n})^{(1-\epsilon')/2} \text{ a.s.} \]
for a sufficiently small $\epsilon'>0$ (depends on $\gamma, c, \kappa, v$).
\end{lemma}
\begin{proof}
First, by Lemma \ref{sublinear}, 
$|H_n|=|\frac{\mathbb{E}(\overline{X}_n \cdot \overline{X}_n^T)}{b_n}| \precsim n^{c-\gamma(c+1)}$. Together with (\ref{gamma3}), we have
\begin{align*}
  \delta_n= \mu_n\{u:|u|\ge T_n/4\}&\precsim \int_{|t|>T_n/4} \det(H_n)^{-1/2}\exp(-\langle t,H_n^{-1}t\rangle/2)dt \\
  &\precsim \int_{|t|>(n^{\gamma(c+1)-c}T_n)/4} e^{-|t|^2/2}dt 
\end{align*}
decays exponentially. So $\delta_n =\mu_n\{u:|u|\ge T_n/4\}\precsim n^{-v}$.

Then \[\alpha_n:=16d \cdot \frac{\log T_n}{T_n}+4 \lambda_n^{1/2} \cdot T_n^d + \delta_n \precsim n^{-\kappa}+n^{d\kappa-v/2}\precsim n^{-\min(\kappa,v/2-d\kappa)},\]
which is summable (i.e. $ \sum_{n \ge 1}\alpha_n < \infty$) due to (\ref{gamma1}). By Lemma \ref{reverseVASIP}, there are Gaussian vectors $G''_n$ with variance matrices $\mathbb{E}(\overline{X}_n \cdot \overline{X}_n^T)$ such that
 \[|Y_n-\frac{G''_n}{\sqrt{b_n}}|=|\frac{\overline{X}_n}{\sqrt{b_n}}-\frac{G''_n}{\sqrt{b_n}}| < \alpha_n \text{ i.o.}.\]

Then almost surely,
\[\sum_{i=1}^{n} \overline{X}_i-G''_i \precsim \sum_{i=1}^{n} \alpha_i  \sqrt{b_i} \precsim \sum_{i=1}^{n} i^{-\min(\kappa,v/2-d\kappa)} \cdot i^{\frac{c+1}{2}} \precsim n^{1+\frac{c+1}{2}-\min(\kappa, v/2-d \kappa)}. \]

By (\ref{gamma2}), we can choose a small $\epsilon'$ such that
\[\sum_{i=1}^{n} \overline{X}_i-\sum_{i=1}^{n}G''_i \precsim n^{1+\frac{c+1}{2}-\min(\kappa, v/2-d \kappa)}\precsim \lambda(\sigma^2_{a_n})^{\frac{1-\epsilon'}{2}} \text{ a.s.} \tag*{\qedhere}\]
\end{proof}
\subsubsection*{Comparisons between $(\overline{X}_n)_{n \ge 1}$ and $(X_m)_{m \ge 1}$}
For any $m$, there is $n\in \mathbb{N}$ such that $a_n \le m < a_{n+1}$ and we have
\begin{lemma}\label{neighapproximate} $\lambda(\sigma_m^2)\approx \lambda(\sigma_{a_n}^2)$.
\end{lemma}

 \begin{proof}

 By Lemma \ref{sublinear} and Lemma \ref{neighborbound},
 \begin{align*}
     &\mathbb{E}[( \sum_{i=1}^{m}X_i )\cdot ( \sum_{i=1}^{m}X_i )^T]=\mathbb{E}[(\sum_{i=1}^{ n}\overline{X}_i)\cdot (\sum_{i=1}^{ n}\overline{X}_i)^T]+\mathbb{E}[(\sum_{i=1}^{n}\overline{X}_i)\cdot (\sum_{i=a_n+1}^{m}X_i)^T]\\
     &\quad+\mathbb{E}[(\sum_{i=1}^{n}\overline{X}_i)\cdot (\sum_{i=a_n+1}^{m}X_i)^T]^T+\mathbb{E}[(\sum_{i=a_n+1}^{m}X_i)\cdot (\sum_{i=a_n+1}^{m}X_i)^T]\\
     &\precsim \mathbb{E}[(\sum_{i=1}^{ n}\overline{X}_i)\cdot (\sum_{i=1}^{n}\overline{X}_i)^T]+ n a_{n+1}^{\max(3-1/\alpha,0)}+n a_{n+1}^{\max(3-1/\alpha,0)}+a_{n+1}-a_{n}.
 \end{align*}
Using $a_n \approx a_{n+1}$ and $\mathbb{E}[(\sum_{i=1}^{ n}\overline{X}_i)\cdot (\sum_{i=1}^{ n}\overline{X}_i)^T] \succsim a_n^{\gamma} $, the inequality above becomes
\begin{align*}
  &\precsim \{\mathbb{E}[(\sum_{i=1}^{ n}\overline{X}_i)\cdot (\sum_{i=1}^{ n}\overline{X}_i)^T]\} \cdot [1+  n a_n^{\max(3-1/\alpha,0)-\gamma}+ n  a_{n}^{\max(3-1/\alpha,0)-\gamma}+ a_n^{-\gamma}(a_{n+1}-a_n)] \\
  &\precsim \{\mathbb{E}[(\sum_{i=1}^{ n}\overline{X}_i)\cdot (\sum_{i=1}^{ n}\overline{X}_i)^T]\} \cdot (1+   n^{1+(c+1)(\max(3-1/\alpha,0)-\gamma)}+n^{c-\gamma(c+1)}).
\end{align*}
By (\ref{gamma3}) and (\ref{gamma4}), we have
  \[\lambda(\sigma_m^2) \precsim  \inf_{|u|=1} u^T \cdot \mathbb{E}[(\sum_{i=1}^{ n}\overline{X}_i)\cdot (\sum_{i=1}^{ n}\overline{X}_i)^T] \cdot u=\lambda(\sigma_{a_n}^2).\]
Similarly,  by Lemmas \ref{sublinear} and \ref{neighborbound}, 
\begin{align*}
 \sigma^2_{a_n}&\precsim \mathbb{E}[( \sum_{i=1}^{ m}X_i )\cdot ( \sum_{i=1}^{ m}X_i )^T]+ n a_n^{\max(3-1/\alpha,0)}+n  a_{n}^{\max(3-1/\alpha,0)}+a_{n+1}-a_{n}\\
      &\precsim \mathbb{E}[( \sum_{i=1}^{ m}X_i )\cdot ( \sum_{i=1}^{ m}X_i )^T] + m^{\frac{1}{c+1}+\max(3-1/\alpha,0)}+m^{\frac{c}{c+1}}.
  \end{align*}
Since $\mathbb{E}[( \sum_{i=1}^{ m}X_i )\cdot ( \sum_{i=1}^{ m}X_i )^T] \succsim m^{\gamma}$, then the inequality above becomes
\[\precsim \{\mathbb{E}[( \sum_{i=1}^{ m}X_i )\cdot ( \sum_{i=1}^{ m}X_i )^T]\} \cdot (1+ m^{\frac{1}{c+1}+\max(3-1/\alpha,0)-\gamma}+m^{\frac{c}{c+1}-\gamma}).\]
  By (\ref{gamma3}) and (\ref{gamma4}) again, we have $\lambda(\sigma_{a_n}^2) \precsim \lambda(\sigma_m^2)$.
\end{proof}

\begin{lemma}\label{neighbormax} 
 \[\sup_{a_n\le m \le a_{n+1}}|\sum_{i=a_n+1}^{ m}X_i|\precsim \lambda(\sigma_{a_n}^2)^{1/2-\epsilon'} \text{ a.s.}\]
 for a sufficiently small $\epsilon'>0$ (depends on $\gamma, c, \epsilon_0$).

 \end{lemma}
 
 \begin{proof}

 By Lemma \ref{max},
 \[\mathbb{E} (|\frac{\sup_{a_n\le m \le a_{n+1}}|\sum_{i=a_n+1}^{ m}X_i|}{\lambda(\sigma_{a_n}^2)^{1/2-\epsilon'}}|^{2+\epsilon_0}) \precsim \frac{n^{c(1+\epsilon_0/2)}}{n^{\gamma(c+1)(2+\epsilon_0)(1/2-\epsilon')}}. \]
From (\ref{gamma7}), there is a small $\epsilon'>0$ such that
$\gamma(c+1)(2+\epsilon_0)(1/2-\epsilon')-c(1+\epsilon_0/2)>1$. By the Borel-Cantelli Lemma, we have
\[\sup_{a_n\le m \le a_{n+1}}|\sum_{i=a_n+1}^{ m}X_i|\precsim \lambda(\sigma_{a_n}^2)^{1/2-\epsilon'} \text{ a.s.} \tag*{\qedhere}\]
\end{proof}

\subsubsection*{Gaussian approximations for $(X_m)_{m \ge 1}$}
It is always possible to find nonzero independent Gaussian vectors $\{G_k,k \ge 1\}$ such that for each $n \in \mathbb{N}$, $\sum_{k\in I_n }G_k=G_n''$ where $G_n''$ are in the Lemma \ref{gassuanappr}. We claim that $\sum_{i\le m}G_i$ matches $\sum_{i \le m}X_i$ for any $m \in \mathbb{N}$ in the sense of (\ref{22}) and (\ref{23}):

Verify (\ref{23}): for any $m$, there is $n$ such that $a_n \le m < a_{n+1}$. Recall
\begin{equation}\label{25}
 \tilde{\mathbb{E}}(G''_i \cdot {G''_i}^T)=\sum_{j \in I_i}\tilde{\mathbb{E}}[(G_j) \cdot (G_j)^T]=\mathbb{E}(\overline{X}_i \cdot \overline{X}_i^T), 
\end{equation}
where $\tilde{\mathbb{E}}$ is the expectation of the probability of the extended probability space. Then, by (\ref{25}), we have
\begin{align*}
    &\mathbb{E}[(\sum_{i=1}^{ m}X_i)\cdot(\sum_{i=1}^{ m}X_i)^T]-  \sum_{i\le m}\tilde{\mathbb{E}}(G_i \cdot {G_i}^T)\\
    &=\mathbb{E}[(\sum_{i=1}^{ m}X_i)\cdot(\sum_{i=1}^{ m}X_i)^T]-\sum_{i=1}^{ n}\tilde{\mathbb{E}}(G''_i \cdot {G''_i}^T)-\sum_{i=a_{n}+1}^{ m}\tilde{\mathbb{E}}(G_i \cdot {G_i}^T)\\
    &=\sum_{1 \le i<j \le n}\mathbb{E}(\overline{X}_i\cdot \overline{X}_j^T)+\sum_{1 \le i<j \le n}\mathbb{E}(\overline{X}_i\cdot \overline{X}_j^T)^T+\mathbb{E}[(\sum_{i=1}^{n}\overline{X}_i)\cdot (\sum_{i=a_n+1}^{m}X_i)^T]\\
    &\quad +\mathbb{E}[(\sum_{i=1}^{n}\overline{X}_i)\cdot (\sum_{i=a_n+1}^{ m}X_i)^T]^T+\mathbb{E}[(\sum_{i=a_n+1}^{ m}X_i)\cdot (\sum_{i=a_n+1}^{ m}X_i)^T]-\sum_{i=a_n+1}^{ m}\tilde{\mathbb{E}}(G_i \cdot {G_i}^T).
\end{align*}

By (\ref{25}), Lemma \ref{sublinear} and Lemma \ref{neighborbound}, the equality above becomes 
\begin{align*}
    &\precsim n a_n^{\max(3-1/\alpha,0)}+n  a_{n+1}^{\max(3-1/\alpha,0)}+2(a_{n+1}-a_{n}) \precsim n^{1+(c+1)\max(3-1/\alpha,0)}+n^c\\
    &\precsim \lambda(\sigma_{a_n}^2)^{\frac{1+(c+1)\max(3-1/\alpha,0)}{\gamma(c+1)}}+ \lambda(\sigma_{a_n}^2)^{\frac{c}{\gamma(c+1)}}. 
\end{align*}

By (\ref{gamma3}), (\ref{gamma4}), there is a small $\epsilon'>0$ such that
\[\frac{1+(c+1)\max(3-1/\alpha,0)}{\gamma(c+1)}<1-\epsilon', \text{ } \frac{c}{\gamma(c+1)}<1-\epsilon'.\]

 Therefore, by Lemma \ref{neighapproximate}, we have
  \[|\mathbb{E}[(\textstyle\sum_{i \le m}X_i)\cdot(\sum_{i \le m}X_i)^T]-  \sum_{i\le m}\tilde{\mathbb{E}}(G_i \cdot {G_i}^T)| \precsim \lambda(\sigma_{a_n}^2)^{1-\epsilon'} \precsim \lambda(\sigma_m^2)^{1-\epsilon'}.\]

 Verify (\ref{22}): by Lemma \ref{neighapproximate} and Lemma \ref{neighbormax}, we have
\begin{equation}\label{26}
    \sum_{i=1}^{m}X_i - \sum_{i=1}^{m}G_i=\sum_{i=1}^{n} \overline{X}_i-\sum_{i=1}^{n}G''_i+ \sum_{i=a_n+1}^{ m}X_i-\sum_{i=a_n+1}^{m}G_i.
\end{equation} 
 
Note that
  \[\mathbb{E} (|\frac{\sup_{a_n\le m \le a_{n+1}}|\sum_{i=a_n+1}^{m}G_i|}{\lambda(\sigma_{a_n}^2)^{1/2-\epsilon'}}|^{2+\epsilon_0}) \precsim \frac{(a_{n+1}-a_n)^{(1+\epsilon_0/2)}}{n^{\gamma(c+1)(2+\epsilon_0)(1/2-\epsilon')}}\precsim \frac{n^{c(1+\epsilon_0/2)}}{n^{\gamma(c+1)(2+\epsilon_0)(1/2-\epsilon')}}, \]
  so by (\ref{gamma7}), there is a small $\epsilon'>0$ such that
  \[\sup_{a_n\le m \le a_{n+1}}|\sum_{i=a_n+1}^{m}G_i|\precsim \lambda(\sigma_{a_n}^2)^{1/2-\epsilon'} \text{ a.s.}\]

Hence, by Lemma \ref{gassuanappr}, (\ref{26}) can be estimated as follows

 \[\precsim \lambda(\sigma_{a_n}^2)^{\frac{1-\epsilon'}{2}}+\sup_{a_n\le m \le a_{n+1}}|\sum_{i=a_n+1}^{m}X_i|\precsim  \lambda(\sigma_{a_n}^2)^{\frac{1-\epsilon'}{2}} \approx \lambda(\sigma_m^2)^{\frac{1-\epsilon'}{2}}  \text{ a.s.} \]

Therefore (\ref{22}) and (\ref{23}) hold. So we finish the proof of the Theorem \ref{thm3}. \qed
\section{Proof of Theorem \ref{thm}}\label{firstthmproof}
\begin{proof}
We are going to apply our Theorem \ref{thm3} to prove the Theorem \ref{thm}. Let $X_k:=\phi_k \circ T^k, \mathcal{E}_k:=T^{-k}\mathcal{B}$. Clearly, (\ref{A0''}) holds.

Verify (\ref{A1''}): by (\ref{A1}),
\begin{align*}
   &\sup_{i \ge 1}  \mathbb{E} |\mathbb{E}_{n+i}X_i|d\mu=\sup_{i \ge 1,||\psi||_{{\infty}\le 1}} \int \psi \circ T^{n+i} \cdot \mathbb{E}_{n+i}\phi_i\circ T^id\mu\\
   &=\sup_{i \ge 1,||\psi||_{{\infty}\le 1}}\int \psi \circ T^{n+i} \cdot \phi_i\circ T^i d\mu =\sup_{i \ge 1, ||\psi||_{{\infty}\le 1}}\int \psi \circ T_{i+1}^{n+i} \cdot \phi_i \cdot P^i \textbf{1} d\mu\\
   &=\sup_{i\ge 1}\int |P_{i+1}^{n+i} (\phi_i \cdot P^i \textbf{1})| d\mu  \precsim   n^{1-1/\alpha}.
\end{align*}

Verify (\ref{A2''}): by (\ref{A2}),
\begin{align*}
    &\sup_{i \ge 1}\mathbb{E} |\mathbb{E}_{n+i} [X_i \cdot X_i^T - \mathbb{E}(X_i \cdot X_i^T)] |\\
    &=\sup_{i \ge 1, ||\psi||_{{\infty}\le 1}}\int \psi \circ T^{n+i} \cdot [\phi_i\circ T^i \cdot \phi^T_i\circ T^i - \int \phi_i \circ T^i \cdot \phi_i^T \circ T^i d\mu]d\mu\\
    &=\sup_{i \ge 1, ||\psi||_{{\infty}\le 1}}\int \psi\circ T_{i+1}^{n+i}\cdot [(\phi_i \cdot \phi_i^T - \int \phi_i \circ T^i \cdot \phi_i^T \circ T^i d\mu) \cdot P^i \textbf{1}] |  d\mu\\
    &=\sup_{i\ge 1}\int |P_{i+1}^{n+i} [(\phi_i \cdot \phi_i^T - \int \phi_i \circ T^i \cdot \phi_i^T \circ T^i d\mu) \cdot P^i \textbf{1}] |  d\mu \precsim  n^{1-1/\alpha}.
\end{align*}

Verify (\ref{A3''}): by (\ref{A3}),
\begin{align*}
    &\mathbb{E}|\mathbb{E}_{n+i+j} [X_i \cdot X_{i+j}^T-\mathbb{E}(X_i \cdot X_{i+j}^T)]|\\
    &=\sup_{ ||\psi||_{\infty}\le 1 }\mathbb{E}\psi \circ T^{n+i+j}\cdot  [X_i \cdot X_{i+j}^T-\mathbb{E}(X_i \cdot X_{i+j}^T)]\\
    &=\sup_{ ||\psi||_{\infty}\le 1 }\int\psi \circ T^{n+i+j}_{i+1}\cdot  [\phi_i \cdot \phi_{i+j}^T\circ T_{i+1}^{i+j}-\mathbb{E}(\phi_i \cdot \phi_{i+j}^T\circ T_{i+1}^{i+j}\cdot P^i \textbf{1})] \cdot P^i\textbf{1}d\mu\\
    &=\sup_{ ||\psi||_{\infty}\le 1}\int \psi \circ T_{i+j+1}^{i+j+n} \cdot \{ [P^{i+j}_{i+1}(\phi_i \cdot P^i \textbf{1}) \cdot \phi_{i+j}^T -P^{i+j} \textbf{1}\cdot \int P^{i+j}_{i+1}(\phi_i \cdot P^i \textbf{1}) \cdot \phi_{i+j}^T d\mu]\} d\mu\\
    &=\int |P_{i+j+1}^{i+j+n}\{ [P^{i+j}_{i+1}(\phi_i \cdot P^i \textbf{1}) \cdot \phi_{i+j}^T - P^{i+j} \textbf{1}\cdot \int P^{i+j}_{i+1}(\phi_i \cdot P^i \textbf{1}) \cdot \phi_{i+j}^T d\mu]\}| d\mu\precsim  n^{1-1/\alpha},
\end{align*}
where the constant in $\precsim$ does not depend on $i,j,n$. 

By Theorem \ref{thm3}, there is $\gamma \in (0, 1)$ which depends on $d, \alpha$ only ($\gamma$ will be given in Appendix, Lemma \ref{para}), such that if $ \lambda(\sigma_n^2) \succsim n^{\gamma}$, then $( \phi_k \circ T^k )_{k \ge 1}\text{ satisfies the VASIP}$. Therefore we finish the proof of Theorem \ref{thm}.
\end{proof}
\section{Proof of Theorem \ref{thm2}}\label{secondthmproof}
\begin{proof}
For almost every $\omega \in \Omega$, we will apply our Theorem \ref{thm3} to the probability space $(X, \mathcal{B}, \mu_{\omega})$ and the maps $(T_{\sigma^k \omega})_{k \ge 0}$. Let $X_k:=\phi_{\sigma^k \omega}\circ T^k_{\omega}$ and $\mathcal{E}_{k}:=(T_{\omega}^{k})^{-1}\mathcal{B}$, define:
\[\mathbb{E}^{\omega}(\cdot)=\int(\cdot)d\mu_{\omega}, \mathbb{E}_n^{\omega}(\cdot):=\mathbb{E}^{\omega}[(\cdot)|(T^n_{\omega})^{-1}\mathcal{B}].\]

Clearly, (\ref{A0''}) holds. 

Verify (\ref{A1''}): by (\ref{A1'}),
\begin{align*}
    \sup_{i\ge 1}\mathbb{E}^{\omega}|\mathbb{E}^{\omega}_{n+i} X_i |&=\sup_{i \ge 1, ||\psi||_{{\infty}}\le1} \int \psi \circ T^{n+i}_{\omega} \cdot  \phi_{\sigma^i\omega} \circ T^i_{\omega} d\mu_{\omega}=\sup_{i \ge 1, ||\psi||_{{\infty}}\le1} \int \psi \circ T^{n}_{\sigma^i\omega} \cdot  \phi_{\sigma^i\omega} d\mu_{\sigma^i\omega}\\
    &\le \sup_{i \ge 0}\int |P_{\sigma^i \omega}^{n}(\phi_{\sigma^i\omega} \cdot h_{\sigma^i \omega})|d\mu \precsim n^{1-1/\alpha}.
\end{align*}

Verify (\ref{A2''}): by (\ref{A2'})
\begin{align*}
    &\sup_{k \ge 1}\mathbb{E}^{\omega} |\mathbb{E}^{\omega}_{n+k} [X_k \cdot X_k^T - \mathbb{E}(X_k \cdot X_k^T)] |\\
    &=\sup_{k \ge 1}\mathbb{E}^{\omega}|\mathbb{E}^{\omega}_{n+k} [ \phi_{\sigma^k\omega} \circ T^k_{\omega} \cdot \phi^T_{\sigma^k\omega} \circ T^k_{\omega}-\mathbb{E}^{\omega}(\phi_{\sigma^k\omega} \circ T^k_{\omega} \cdot \phi^T_{\sigma^k\omega} \circ T^k_{\omega})]|\\
    &=\sup_{k \ge 1}\sup_{||\psi||_{\infty}\le 1}\int \psi \circ T^{n+k}_{\omega} \cdot [ \phi_{\sigma^k\omega} \circ T^k_{\omega} \cdot \phi^T_{\sigma^k\omega} \circ T^k_{\omega}-\mathbb{E}^{\sigma^k\omega}(\phi_{\sigma^k\omega}  \cdot \phi^T_{\sigma^k\omega})]d\mu_{\omega}\\
    &=\sup_{k \ge 1}\sup_{||\psi||_{\infty}\le 1}\int \psi \circ T^{n}_{\sigma^k\omega} \cdot [ \phi_{\sigma^k\omega} \cdot \phi^T_{\sigma^k\omega}-\mathbb{E}^{\sigma^k\omega}(\phi_{\sigma^k\omega}  \cdot \phi^T_{\sigma^k\omega})]d\mu_{\sigma^k\omega}\\
    &\precsim \sup_{k \ge 0} \int |P_{\sigma^{k}\omega}^{n}\{[\phi_{\sigma^k\omega}  \cdot \phi^T_{\sigma^k\omega} -\mathbb{E}^{\sigma^k\omega}(\phi_{\sigma^k\omega}  \cdot \phi^T_{\sigma^k\omega})] \cdot h_{\sigma^k \omega}\}|d\mu\precsim n^{1-1/\alpha}.
\end{align*}

Verify (\ref{A3''}): by (\ref{A3'}) 
\begin{align*}
    &\mathbb{E}^{\omega}|\mathbb{E}^{\omega}_{n+i+j} [X_i \cdot X_{i+j}^T-\mathbb{E}^{\omega}(X_i \cdot X_{i+j}^T)]|\\
    &=\sup_{ ||\psi||_{\infty}\le 1}\int \psi\circ T_{\sigma^i\omega}^{n+j} \cdot [ \phi_{\sigma^i \omega} \cdot \phi_{\sigma^{i+j}\omega}^T \circ T^{j}_{\sigma^i\omega}-\mathbb{E}^{\sigma^i\omega}(\phi_{\sigma^i \omega}  \cdot \phi_{\sigma^{i+j}\omega}^T \circ T^{j}_{\sigma^i\omega})]d\mu_{\sigma^i\omega}\\
    &=\sup_{ ||\psi||_{\infty}\le 1}\int \psi\circ T_{\sigma^{i+j}\omega}^{n} \cdot [ P_{\sigma^i\omega}^j(\phi_{\sigma^i \omega}\cdot h_{\sigma^i \omega}) \cdot \phi^T_{\sigma^{i+j}\omega}-h_{\sigma^{i+j}\omega}\cdot \int P_{\sigma^i\omega}^j(\phi_{\sigma^i \omega}\cdot h_{\sigma^i \omega}) \cdot \phi^T_{\sigma^{i+j}\omega}d\mu]d\mu\\
    &= \int |P^{n}_{\sigma^{i+j}\omega} [ P_{\sigma^i\omega}^j(\phi_{\sigma^i \omega}\cdot h_{\sigma^i \omega}) \cdot \phi^T_{\sigma^{i+j}\omega}-h_{\sigma^{i+j}\omega}\cdot \int P_{\sigma^i\omega}^j(\phi_{\sigma^i \omega}\cdot h_{\sigma^i \omega}) \cdot \phi^T_{\sigma^{i+j}\omega}d\mu]| d\mu\precsim n^{1-1/\alpha},
\end{align*}
where the constant in $\precsim$ does not depend on $i,j,n,\omega$. 

Therefore for this $\omega \in \Omega$, by our Theorem \ref{thm3}, (\ref{22}) and (\ref{23}) hold under the condition $\sigma^2_n(\omega) \succsim n^{\gamma}$. In other words, if  $\sigma^2_n(\omega) \succsim n^{\gamma}$, (\ref{qmatching}) and (\ref{qvariancegrowth}) hold.

Next we claim the quenched VASIP or the coboundary in Theorem \ref{thm2} by verifying the variance growth (\ref{qlinear}).

The verification of whether (\ref{qlinear}) holds or not follows the argument of Lemma 12 in \cite{DFGV} except the following:
\begin{enumerate}
    \item in our case, the last inequality of page 2270 in \cite{DFGV} becomes
    \[\le \bar{K} \cdot \sum_{i \ge 1} i^{1-1/\alpha} < \infty,\]
    \item in our case, the inequality in the middle of page 2271 becomes
    \[\le \bar{K} \sum_{i=0}^{n-1} \sum_{k \ge n-i} k^{1-1/\alpha}= \sum_{k=1}^{ n-1} k \cdot k^{1-1/\alpha} + n\sum_{k \ge n}k^{1-1/\alpha} \precsim n^{3-1/\alpha} \int_{1/n}^1 x^{2-1/\alpha} dx + n\sum_{k \ge n}k^{1-1/\alpha}. \]

Then (35) in \cite{DFGV} becomes $\le n^{-1} \cdot (n^{3- 1/\alpha} +n\sum_{k \ge n}k^{1-1/\alpha}) \to 0$.
\end{enumerate}

Therefore there is a $d\times d $ positive semi-definite matrix $\Sigma^2$ such that almost every $\omega \in \Omega$,
\[\lim_{n \to \infty } n^{-1} \int (\sum_{ k=0}^{n-1} \phi_{\sigma^k(\omega)} \circ T^{k}_{\omega} ) \cdot  (\sum_{ k=0}^{n-1} \phi_{\sigma^k(\omega)} \circ T^{k}_{\omega} )^T d\mu_{\omega} = \Sigma^2.\]
If $\Sigma^2>0$, then $\sigma^2_n(\omega)$ grows linearly for a.e. $\omega \in \Omega$, that is, (\ref{qlinear}) holds. By Theorem \ref{thm3}, the quenched VASIP for $(\phi_{\sigma^k \omega } \circ T_{\omega}^k)_{k \ge 1, \omega \in \Omega}$ holds.

If $ \det(\Sigma^2)=0$, without loss of generality, we assume that $\Sigma^2=\begin{bmatrix}
I_{d_1 \times d_1} &  0  \\
0  & \textbf{0}_{d_2 \times d_2} \\
\end{bmatrix}_{d \times d}$.

If $d_1=0$, we claim that the coboundary holds: 

Without loss of generality, we assume that all $(\phi_{\omega})_{\omega \in \Omega}$ are scalar functions, and denote $\bar{\phi}(\omega, x):=\phi_{\omega}(x)$. Similar to the computations of Lemma 12 (36) in \cite{DFGV}, we have:
\[0=\Sigma^2= \int \bar{\phi}^2(\omega, x) d\mu_{\omega}d\mathbb{P}+2 \sum_{i \ge 1} \int \bar{\phi}(\omega, x)\cdot \bar{\phi}\circ \tau^i(\omega, x) d\mu_{\omega} d\mathbb{P},\]
where $\tau(\omega, x):=(\sigma \omega, f_{\omega}(x))$ (here $(\Omega \times X, \tau, d\mu_{\omega}d\mathbb{P})$ is a stationary dynamical system).

For the stationary dynamical system $(\Omega \times X, \tau, d\mu_{\omega}d\mathbb{P})$ with the observable $\bar{\phi} \in L^{\infty}(\Omega \times X)$, we denote the transfer operator of $\tau$ by $\tau^{*}$. We will verify conditions (1) and (2) of Theorem 1.1 in \cite{Li}: by (\ref{A1'}),
\[\sum_{n \ge 0} |\int \bar{\phi} \cdot \bar{\phi} \circ \tau^n d\mu_{\omega}d\mathbb{P}| \precsim \sum_{n \ge 0} \int |P_{\omega}^n (\phi_{\omega} \cdot h_{\omega}) | d\mu d\mathbb{P} \precsim \sum_{n \ge 1} n^{1-1/\alpha} < \infty, \]
\begin{align*}
    \sum_{n \ge 0} \int |{\tau^{*}}^n\bar{\phi}| d\mu_{\omega}d\mathbb{P}= \sum_{n \ge 0} \sup_{||\xi||_{\infty}\le 1 }\int \xi \circ \tau^n \cdot \bar{\phi} d\mu_{\omega}d\mathbb{P} &\precsim \sum_{n \ge 0} \int |P_{\omega}^n (\phi_{\omega} \cdot h_{\omega}) | d\mu d\mathbb{P}\\
    &\precsim \sum_{n \ge 1} n^{1-1/\alpha}  < \infty.
\end{align*}

Therefore, by Theorem 1.1 of \cite{Li}, there is $\psi \in L^1(\Omega \times X)$ such that:
\[  \phi_{\sigma \omega}(T_{\omega}x)=\psi(\sigma (\omega),T_{\omega}(x))-\psi(\omega,x) \text{ a.e. }(\omega,x).\]

If $d_1>0, d_2>0$, then $\mathbb{R}^d= \mathbb{R}^{d_1} \bigoplus \mathbb{R}^{d_2}$ with projections $\pi_1:\mathbb{R}^{d_1} \bigoplus \mathbb{R}^{d_2} \to \mathbb{R}^{d_1}, \pi_2: \mathbb{R}^{d_1} \bigoplus \mathbb{R}^{d_2} \to \mathbb{R}^{d_2}.$  Then $(\pi_1 \circ  \phi_{\sigma^k(\omega)} \circ T^{k}_{\omega})_{k \ge 1, \omega \in \Omega}$ has the quenched VASIP by the argument of ``$\Sigma^2>0$" above. For $(\pi_2 \circ  \phi_{\sigma^k(\omega)} \circ T^{k}_{\omega})_{k \ge 1, \omega \in \Omega}$, we follow the argument of ``$d_1=0$" above, so there is $\psi \in L^1(\Omega \times X; \mathbb{R}^{d_2})$ such that:
\[  \pi_2 \circ \phi_{\sigma \omega}(T_{\omega}x)=\psi(\sigma (\omega),T_{\omega}(x))-\psi(\omega,x) \text{ a.e. }(\omega,x).\]

So we finish the proof of Theorem \ref{thm2}.
\end{proof}

\section{Proof of Corollaries}\label{sec6}
\begin{proof}[Proof of Corollary \ref{cor1}] Define $X(x) :=  x, x\in [0,1]$, for a sufficiently large $a_0>1$, consider a cone $ C_{a_0} \subset{L^1[0,1]}$:
\[ C_{a_0} :=\{f \in \mathrm{Lip}_{loc} (0,1]: f \ge 0, f \text{ decreasing, } X^{\alpha+1} \cdot f \text{ increasing, } f(x) \le {a_0} \cdot x^{-\alpha} \cdot \int { f} dm\}. \]

To prove the Corollary \ref{cor1}, we need the following lemma:
\begin{lemma}[See also \cite{AHNTV, LSV, NTV}]\ \label{ntv}\ \par
  Assume that $K, M>0$, $\phi_i \in \mathrm{Lip}[0,1]$ and $h_k\in C_{a_0}$ with
  $ ||\phi_i||_{\mathrm{Lip}}\le K, ||h_k||_{L^1}\le M$ for all $i, k \ge 1 $. Then for a sufficiently large ${a_0}>1$ (does not depend on $K, M$), there are constants $\lambda, v, \delta$ (only depends on $K, M, \alpha, {a_0}$) such that the following holds:
\begin{equation}\label{conedecompose}
 h^1_{i,k}:=(\phi_i+ \lambda \cdot X+ v )h_k+ \delta, h^2_{i,k}:=(\lambda \cdot X + v)h_k + \delta +\int \phi_i\cdot h_k dm  \in C_{a_0},
  \end{equation}
\[\phi_i\cdot h_k -\int{\phi_i\cdot h_k}dm=h^1_{i,k}-h^2_{i,k} \in C_{a_0}-C_{a_0},\]
\[\int h^1_{i,k} dm= \int h^2_{i,k} dm,\]
\[\textbf{1} \in C_{a_0}, 
  C_{a_0} \text{ is preserved by all } T_k\text{'s transfer operators } P_k.\]

  Furthermore, there are constants
  $ C_{K,M, \alpha,{a_0}}, C_{\alpha,{a_0}} $ such that for all $m, n \in \mathbb{N}$, $h \in C_{a_0}$:
\begin{equation}\label{decayNYV}
     ||P_{m+1}^{n+m}(\phi_k\cdot h_k -\int{\phi_k\cdot h_k}dm)||_{L^1} \le C_{K,M, \alpha, {a_0}} \cdot n^{1-1/\alpha} ,
  \end{equation}
   \begin{equation}\label{decayNYV1}
     ||P_{m+1}^{n+m}(h -\int{ h}dm)||_{L^1} \le C_{\alpha,{a_0}} \cdot ||h||_{L^1} \cdot n^{1-1/\alpha} .
  \end{equation}
  
\begin{proof}
\cite{AHNTV, NTV} proved these properties for the cone $C_{a_0} \cap C^1(0,1]$. However, the $C^1$ conditions are not used in their proofs, so the decay of correlation (\ref{decayNYV1}) still holds for our $C_{a_0}$, and $C_{a_0}$ is still a $P_k$-invariant cone. To prove (\ref{conedecompose}), the argument is replacing $|\phi_k'|_{\infty}$ with its Lipschitz constant $ \mathrm{Lip}(\phi_k)$ in Lemma 2.4 of \cite{NTV}. Then (\ref{decayNYV}) holds by applying (\ref{decayNYV1}) and (\ref{conedecompose}).
\end{proof}
\end{lemma}

With this lemma, we can prove the VASIP for $(\phi_k)_{k \in \mathbb{N}} \subset \mathrm{Lip}[0,1]$ in Corollary \ref{cor1} now:

Since $\sup_{i}||\phi_i||_{\mathrm{Lip}}<\infty, \sup_{i}||\phi_i \cdot \phi_i^T||_{\mathrm{Lip}} \le 2\sup_{i}||\phi_i||^2_{\mathrm{Lip}}<\infty, P^k \textbf{1} \in C_{a_0}$, $||P^k \textbf{1}||_{L^1}=1$, so the conditions (\ref{A1}) and (\ref{A2}) are easily verified by (\ref{decayNYV}). Now we verify (\ref{A3}):
\[\int |P_{i+j+1}^{i+j+n} \{ [P^{i+j}_{i+1}(\phi_i \cdot P^i \textbf{1}) \cdot \phi_{i+j}^T - \int P^{i+j}_{i+1}(\phi_i \cdot P^i \textbf{1}) \cdot \phi_{i+j}^T dm] \}| dm \precsim  n^{1-1/\alpha}.\]

For the fixed $i, j$ above, by (\ref{conedecompose}), there are $h_1, h_1', h_1''', h_1'''', h_2, h_2', h_2''', h_2'''' \in C_{a_0}$ and the following decompositions:

$h_1-h_2=\phi_i \cdot P^i \textbf{1} \in C_{a_0}-C_{a_0}, h_1':=P^{i+j}_{i+1}h_1 \in C_{a_0}, h_2':=P^{i+j}_{i+1}h_2 \in C_{a_0}, h_1'''-h_2'''=h_1' \cdot \phi_{i+j}^T- \int h_1' \cdot \phi_{i+j}^T dm\in C_{a_0}-C_{a_0}, h_1''''-h_2''''=h_2' \cdot \phi_{i+j}^T- \int h_2'\cdot \phi_{i+j}^T dm\in C_{a_0}-C_{a_0}$.

So $ \int P^{i+j}_{i+1}(\phi_i \cdot P^i \textbf{1}) \cdot \phi_{i+j}^T dm=\int h_1'\cdot \phi_{i+j}^T dm-\int h_2'\cdot \phi_{i+j}^T dm$.

By (\ref{conedecompose}), (\ref{decayNYV1}),
\begin{align*}
   &\int |P_{i+j+1}^{i+j+n} \{[P^{i+j}_{i+1}(\phi_i \cdot P^i \textbf{1}) \cdot \phi_{i+j}^T - \int P^{i+j}_{i+1}(\phi_i \cdot P^i \textbf{1}) \cdot \phi_{i+j}^T dm] \}| dm\\ 
   &=\int |P_{i+j+1}^{i+j+n} [h_1'''-h_2'''-(h_1''''-h_2'''')]|dm \precsim C_{\alpha, {a_0}} \cdot (||h_1'''||_{L^1}+||h_1''''||_{L^1}) \cdot n^{1-1/\alpha} \\
   &\precsim C_{\alpha,{a_0}} \cdot  C_{\sup_k||\phi_k||_{\mathrm{Lip}},||h_1'||_{L^1}, ||h_2'||_{L^1}} \cdot n^{1-1/\alpha}  = C_{\alpha,{a_0}} \cdot  C_{\sup_k||\phi_k||_{\mathrm{Lip}},||h_1||_{L^1}, ||h_2||_{L^1}} \cdot n^{1-1/\alpha}.
\end{align*}

By (\ref{conedecompose}), $||h_1||_{L^1}, ||h_2||_{L^1}$ are bounded by a constant $C_{\sup_k||\phi_k||_{\mathrm{Lip}}}$. Therefore,
\[\int |P_{i+j+1}^{i+j+n} \{ [P^{i+j}_{i+1}(\phi_i \cdot P^i \textbf{1}) \cdot \phi_{i+j}^T - \int P^{i+j}_{i+1}(\phi_i \cdot P^i \textbf{1}) \cdot \phi_{i+j}^T d\mu] \}| dm \le C_{\sup_k||\phi_k||_{\mathrm{Lip}}, \alpha, {a_0}} \cdot n^{1-1/\alpha}, \]
for some constant $C_{\sup_k||\phi_k||_{\mathrm{Lip}}, \alpha, {a_0}}>0$. 

Therefore the VASIP holds for $ (\phi_i \circ T^i)_{i \ge 1}$ provided $\lambda(\sigma^2_n) \succsim n^{\gamma}$. 

For the self-norming CLT (\ref{clt}), we will give a similar but simpler proof: let $(\phi_k)_{k \in \mathbb{N}} \subset \mathrm{Lip}([0,1];\mathbb{R})$, $I_n=[1,n]$. Let $a \in (1/2,1), c_n: =\lfloor n^{(1-a)} \rfloor$. Construct consecutive blocks $I_{n,i}$ in $I_n$ such that: $|I_{n,i}|=\lfloor n^{a} \rfloor, 1 \le i \le c_n$, the first block $I_{n,1}$ contains the least number of $I_n$, the last block $I_{n,c_n+1}:= I_n \setminus \bigcup_{1 \le i \le c_n} I_{n,i}$ contains the largest number of $I_n$. So $|I_{n,c_n+1}|\le 2\lfloor n^{a} \rfloor$ and $\bigcup_{1 \le i \le c_n+1} I_{n,i}=I_n$. Similar to the proof of Theorem \ref{thm3},  let $X_n:= \sum_{i \le n} \phi_i \circ T^i$, $b_n:=\lambda(\sigma^2_n)=\sigma^2_n \succsim n^{\gamma_1}$ ($\gamma_1$ will be given in Appendix, Lemma \ref{para1}), fix any $u \in \mathbb{R}$:
\[|\mathbb{E}[\exp(iu\cdot \frac{X_n}{\sqrt{b_n}})]-\exp(-u^2/2 )| \le  \mathbb{E}|\mathbb{E}_{n+1}[\exp(iu\cdot \frac{X_n}{\sqrt{b_n}})]-\exp(-u^2/2 )|. \]
 
Using the same method which estimates (\ref{conditionalclt}), the inequality above becomes
\begin{align*}
    &\sum_{k=0}^{c_n} \mathbb{E}|\mathbb{E}\{[1+i\frac{u \cdot X_{n,k+1}}{\sqrt{b_n}}-\frac{(u\cdot X_{n,k+1})^2}{2b_n}+O(|\frac{u \cdot X_{n,k+1}}{\sqrt{b_n}}|^{2+\epsilon_0})]|\mathcal{F}_{n,k+2}\}\\
    &\quad-\{1-\frac{\mathbb{E}[(u\cdot X_{n,k+1})^2]}{2b_n}+O(|\frac{\mathbb{E}{[(u\cdot X_{n,k+1})^2]}}{b_n}|^2)\}|+|u|^2\cdot \frac{O(n^{c(1-a)+c\max(3-1/\alpha,0)})}{b_n}\\
    & \le \sum_{ k=0}^{ c_n} \{\frac{|u|}{\sqrt{b_n}}+ \frac{|u|^2}{2b_n}\cdot \mathbb{E}|\mathbb{E}\{[X^2_{n,k+1}-\mathbb{E}(X^2_{n,k+1})]|\mathcal{F}_{n,k+2}\}|+  \frac{|u|^4\cdot|\mathbb{E}(X^2_{n,k+1})|^2}{b_n^2}\\
    &\quad+ |\frac{u}{\sqrt{b_n}}|^{2+\epsilon_0} \cdot \mathbb{E}(|X_{n,k+1}|^{2+\epsilon_0})\}+ \frac{O(|u|^2\cdot n^{c(1-a)+c\max(3-1/\alpha,0)})}{b_n}.
\end{align*}

By Lemmas \ref{sublinear}, \ref{conditionalbound}, \ref{conditionalbound2}, \ref{max}, the inequality above becomes
\begin{align*}
    &\precsim \frac{|u|n^{(1-a)}}{n^{\gamma_1/2}}+\frac{|u|^2n^{(1-a)+a\frac{\alpha}{1-\alpha}}}{n^{\gamma_1}}+\frac{|u|^4n^{(1-a)+2a}}{n^{2\gamma_1}}+\frac{|u|^{2+\epsilon_0}n^{(1-a)+a\frac{2+\epsilon_0}{2}}}{n^{\frac{\gamma_1(2+\epsilon_0)}{2}}}+\frac{|u|^2n^{(1-a)+\max(3-1/\alpha,0)}}{n^{\gamma_1}}\\
    &=:|u|n^{\kappa_1}+|u|^2n^{\kappa_2}+|u|^4n^{\kappa_3}+|u|^{2+\epsilon_0}n^{\kappa_4}+|u|^2n^{\kappa_5}.
\end{align*}

In order to have the self-norming CLT (\ref{clt}), we will choose appropriate constants $a,\gamma_1,\epsilon_0$ such that $\kappa_1,\kappa_2,\kappa_3,\kappa_4,\kappa_5$ are negative. One possible choice is, 
\[a= \max(\frac{\epsilon_0+(2+\epsilon_0)\max(0, 3-1/\alpha)}{2+2\epsilon_0}, \frac{\frac{\epsilon_0}{2+\epsilon_0}}{\frac{\epsilon_0}{2+\epsilon_0}+\frac{1-2\alpha}{1-\alpha}}, \frac{2+2\epsilon_0}{4+5\epsilon_0}),\]
where $\epsilon_0$ is any number in  $(0, \min\{1, 2-\frac{2\alpha}{1-\alpha}\}), \gamma_1$ is any number in $(\frac{2+a\epsilon_0}{2+\epsilon_0},1)$.
\end{proof}

\begin{proof}[Proof of Corollary \ref{cor4}]
It is not hard to show that under the assumptions of Corollary \ref{cor4}, there is $\lambda \in (0,1)$ such that for any $f \in \mathcal{V}$,
\[\sup_{m}||P_m^{n+m}(f-\int f d\mu) ||_{\mathcal{V}}\precsim \lambda^n \cdot ||f-\int f d\mu||_{\mathcal{V}}, \]
\[\sup_i||P^i \textbf{1}||_{\mathcal{V}} < \infty.\]

Note that $\mathcal{V}$ is a Banach algebra, so the conditions (\ref{A1})-(\ref{A3}) are all satisfied. By Theorem \ref{thm}, Corollary \ref{cor4} holds.
\end{proof}

\begin{proof}[Proof of Corollary \ref{cor5}]
For the existence and uniqueness of the quasi-invariant probabilities, the proofs are the same as in Proposition 1 of \cite{DFGV}. The conditions (\ref{A1'})-(\ref{A3'}) can be verified by the methods used in the proof of Corollary \ref{cor4}. So by Theorem \ref{thm2}, we have the desired result but $\psi \in L^1(\Omega \times X, d\mu_{\omega}d\mathbb{P})$ in the case of the coboundary. To prove $\psi \in L^2(\Omega \times X, d\mu_{\omega}d\mathbb{P})$, it is exactly the same as in the Proposition 3 of \cite{DFGV}.
\end{proof}

\begin{proof}[Proof of Corollary \ref{cor7}]
The existence of the quasi-invariant probabilities is constructed similar to \cite{DFGV}: consider the Banach space
\[Y=\{v: \Omega \times [0,1] \to \mathbb{R}: v_{\omega}:=v(\omega, \cdot) \in L^1([0,1],m), \sup_{\omega}||v_{\omega}||_{L^1} < \infty \}\]
with norm $||v||:= \sup_{\omega}||v_{\omega}||_{L^1}$.

Define an operator $\mathcal{L}: Y \to Y$: $\mathcal{L}(v)_{\omega}:=P_{\sigma^{-1}\omega}v_{\sigma^{-1}\omega}$. So $||\mathcal{L}v|| \le ||v||$.  Consider $(\mathcal{L}^n \textbf{1})_{n \ge 1}$. We claim this is a Cauchy sequence:

By Lemma \ref{ntv}, since $P_{\omega}\textbf{1}\in C_{a_0}$ for any $\omega \in \Omega$, then for any $n<m$,
\[||\mathcal{L}^n \textbf{1}-\mathcal{L}^m \textbf{1}||=\sup_{\omega}||P^{n}_{\sigma^{-n}\omega}(\textbf{1}-P^{m-n}_{\sigma^{-m} \omega}\textbf{1})||_{L^1} \precsim n^{1-1/\alpha}. \]

Then there is $h\in Y$ such that $\mathcal{L}h=h$, that is, $P_{\sigma^{-1}\omega}h_{\sigma^{-1}\omega}=h_{\omega}$ for a.e.-$\omega \in \Omega$. So $h_{\omega}$, as the limit of $P_{\sigma^{-n}\omega}^n\textbf{1} \in C_{a_0}$, satisfies all conditions of $C_{a_0}$ except $h_{\omega}\in \mathrm{Lip}_{loc}(0,1]$. To prove it, the method is the same as in the Lemma 2.3 of \cite{LSV} (the last two lines of page 674 in \cite{LSV}). Therefore $h_{\omega} \in C_{a_0} \text{ for a.e. } \omega \in \Omega$. Define the quasi-invariant probability $d\mu_{\omega}:=h_{\omega}dm$, so $(T_{\omega})_{*} \mu_{\omega}=\mu_{\sigma \omega} \text{ for a.e. } \omega \in \Omega$. The verification of the conditions (\ref{A1'})-(\ref{A3'}) is the same as in the Corollary \ref{cor1}. By Theorem \ref{thm2}, this corollary holds.
\end{proof}

\begin{proof}[Proof of Corollary \ref{cor6}]
First, we will show: if (\ref{A4}) holds, then there is a $d \times d$ positive semi-definite matrix $\Sigma^2$ and $\epsilon \in (0,1)$ such that
\begin{equation}\label{stationaryvariance}
    \mathbb{E} [(\sum_{i=1}^{ n} \phi \circ T^i ) \cdot (\sum_{i=1}^{n} \phi \circ T^i )^T] = n \cdot \Sigma^2+ o(n^{1-\epsilon}).
\end{equation}

Note that, by (\ref{A4}),
\[\sum_{i \ge 1  }\mathbb{E} (\phi \cdot \phi^T \circ T^i) \precsim \sum_{i \ge 1 } i^{1-1/\alpha}< \infty \text{ absolutely converges.}\]

Let $\Sigma^2: = \mathbb{E}(\phi \cdot \phi^T) + \sum_{i \ge 1  }\mathbb{E} (\phi \cdot \phi^T \circ T^i) +\sum_{i \ge 1  }\mathbb{E}(\phi \cdot \phi^T \circ T^i )^T$, then
\begin{align*}
    &\mathbb{E} [(\sum_{i=1}^{ n} \phi \circ T^i ) \cdot (\sum_{i=1}^{n} \phi \circ T^i )^T] - n \cdot \Sigma^2\\
    &=\sum_{i=1}^{n} \mathbb{E}( \phi \circ T^i \cdot \phi^T \circ T^i)+ \sum_{1\le i < j \le n}\mathbb{E}[(\phi \circ T^i \cdot \phi^T \circ T^j) + (\phi \circ T^i \cdot \phi^T \circ T^j )^T]-n \cdot \Sigma^2\\
    &= n \cdot \mathbb{E} (\phi \cdot \phi^T) + \sum_{1\le i < j \le n}\mathbb{E} (\phi  \cdot \phi^T \circ T^{j-i}) + \sum_{1\le i < j \le n}\mathbb{E} (\phi  \cdot \phi^T \circ T^{j-i} )^T -n \cdot \Sigma^2\\
    &= n \cdot \mathbb{E} (\phi \cdot \phi^T) + \sum_{i=1}^{ n} \sum_{j=1}^{n-i}\mathbb{E}( \phi \cdot \phi^T \circ T^j) + \sum_{i=1}^{n} \sum_{j=1}^{ n-i} \mathbb{E} (\phi  \cdot \phi^T \circ T^{j} )^T -n \cdot \Sigma^2\\
    &= \sum_{i=1}^{n} \sum_{j=1}^{  n-i}\mathbb{E} (\phi \cdot \phi^T \circ T^j) + \sum_{ i=1}^{n} \sum_{j=1}^{n-i} \mathbb{E} (\phi  \cdot \phi^T \circ T^{j} )^T-n \cdot  \sum_{i \ge 1  }\mathbb{E} [(\phi \cdot \phi^T \circ T^i)+(\phi \cdot \phi^T \circ T^i )^T].
\end{align*}

Then we just need to estimate:
\begin{align*}
    &\sum_{i=1}^{n} \sum_{j=1}^{n-i}\mathbb{E} (\phi \cdot \phi^T \circ T^j) -n \cdot  \sum_{i \ge 1  }\mathbb{E} (\phi \cdot \phi^T \circ T^i)=\sum_{i=1}^{n} \sum_{j=1}^{n-i}\mathbb{E}( \phi \cdot \phi^T \circ T^j) - \sum_{i=1}^{n}  \sum_{j \ge 1  }\mathbb{E} (\phi \cdot \phi^T \circ T^j)\\
    &=\sum_{i=1}^{n} \sum_{j > n-i}\mathbb{E}( \phi \cdot \phi^T \circ T^j)=\sum_{i=1}^{n} \sum_{j=n-i+1}^{n }\mathbb{E} (\phi \cdot \phi^T \circ T^j) +n \cdot  \sum_{j > n }\mathbb{E}( \phi \cdot \phi^T \circ T^j).
\end{align*}
By (\ref{A4}) again, the equality above becomes
\begin{align*}
    &\precsim \sum_{i=1}^{n} \sum_{j=n-i+1}^{n }j^{1-1/\alpha} +n \cdot  \sum_{j > n }j^{1-1/\alpha}\precsim \sum_{i=1}^{n } i \cdot i^{1-1/\alpha}+ n \cdot \int_{n}^{\infty} x^{1-1/\alpha}dx\\
    &=n^{3-1/\alpha}+ n \cdot n^{2-1/\alpha}\precsim n^{3-1/\alpha}.
\end{align*}

Since $3-1/\alpha<1$, then there is $\epsilon \in (0,1)$ such that
\[\sigma_n^2-n\cdot \Sigma^2=\mathbb{E} [(\sum_{i=1}^{n} \phi \circ T^i ) \cdot (\sum_{i=1}^{n} \phi \circ T^i )^T] - n \cdot \Sigma^2 \precsim n^{3-1/\alpha}=o(n^{1-\epsilon}).\]

If $ \det(\Sigma^2)>0$, then $\sigma_n^2 \succsim n$. So, by Theorem \ref{thm}, the VASIP holds if conditions (\ref{A5}),(\ref{A6}) are satisfied as well. Moreover, by Lemma \ref{stationary}, the Gaussian vectors are i.i.d. with variances $\Sigma^2$.

If $ \det(\Sigma^2)=0$, without loss of generality, assume that $\Sigma^2=\begin{bmatrix}
I_{d_1 \times d_1} &  0  \\
0  & \textbf{0}_{d_2 \times d_2} \\
\end{bmatrix}_{d \times d}$.

The argument in this case is exactly the same as in Theorem \ref{thm2}, we will not repeat it here.

To prove the VASIP for the Young tower $\Delta$, Young \cite{Y} proved the first order decay of correlations (\ref{A4}) and (\ref{A5}) already, so we just need to verify the second order decay of correlation (\ref{A6}):
\[\sup_j\int |P^n [P^j(\phi) \cdot \phi^T - \int P^j(\phi) \cdot \phi^T dv ]| dv \precsim  n^{1-1/\alpha},\]
where $dv=\frac{dv}{dm}dm$, both $\frac{dv}{dm}$ and $\phi$ are in $L^{\infty}(\Delta) \cap C_{\beta}(\Delta), \inf \frac{dv}{dm}>0$ where $ C_{\beta}(\Delta)$ is the same as in \cite{Y}. Indeed, we just need to show $P^j\phi$ is also a Lipschitz function with a Lipschitz exponent which does not depend on $j$, then (\ref{A6}) holds by using (\ref{A4}):

Without loss of generality, we assume that $\phi$ is a scalar function with a Lipschitz exponent $C_{\phi}$: for any $(a,m_{a}) \in F^{-j}\Delta_{m,i}$, the orbit $\{F^0 (a,m_a), \cdots, F^j (a,m_a)\}$ touches $\Delta_0$ for $q_a$ times ($0 \le q_a \le j$), $a \in \Delta_{0,i_{0,a}}\cap (F^R)^{-1}\Delta_{0,i_{1,a}} \cap \cdots \cap (F^R)^{-(q_a-1)}\Delta_{0,i_{q_a-1,a}}$. We denote $P_a:=((F^R)^{-q_a} \Delta_{0,i} \cap \Delta_{0,i_{0,a}}\cap (F^R)^{-1}\Delta_{0,i_{1,a}} \cap \cdots \cap (F^R)^{-(q_a-1)}\Delta_{0,i_{q_a-1,a}}) \times m_a$. Therefore $F^j(P_a)=\Delta_{m,i}$. For different $P_a$, they are either exactly the same, or disjoint. 

Now we show that $P^j\phi$ are Lipschitz and Lipschitz exponents are uniformly bounded.
\begin{enumerate}
    \item $P^j\phi$ is locally Lipschitz: \par 
For any $z_1=:(x_1,m), z_2:=(x_2,m) \in \Delta_{m,i}$, for any $a$ discussed above, there are $y_a^1\in P_a, y_a^2 \in P_a$ such that $F^j y_a^1=(x_1, m), F^j y_a^2=(x_2, m)$.
\[P^j(\phi)(x_1,m)= \sum_{F^j(y)=(x_1,m)} \frac{\phi(y) \frac{dv}{dm}(y)}{JF^j(y){\frac{dv}{dm}(x_1,m)} \cdot}=\sum_{a} \frac{\phi(y_a^1) \frac{dv}{dm}(y_a^1)}{JF^j(y_a^1){\frac{dv}{dm}(x_1,m)}}.\]
\[P^j(\phi)(x_2,m)=\sum_{F^j(y)=(x_2,m)} \frac{\phi(y) \frac{dv}{dm}(y)}{JF^j(y){\frac{dv}{dm}(x_2,m)}}=\sum_{a} \frac{\phi(y_a^2) \frac{dv}{dm}(y_a^2)}{JF^j(y_a^2){\frac{dv}{dm}(x_2,m)}}.\]
Then
\begin{align*}
    &|P^j(\phi)(x_1,m)-P^j(\phi)(x_2,m)|\\
    &\le \frac{1}{\frac{dv}{dm}(x_1,m)} |\sum_{a} \frac{\phi(y_a^1) \frac{dv}{dm}(y_a^1)}{JF^j(y_a^1)}-\sum_{a} \frac{\phi(y_a^2) \frac{dv}{dm}(y_a^2)}{JF^j(y_a^2)}|\\
    &\quad+|\frac{1}{\frac{dv}{dm}(x_1,m)}-\frac{1}{\frac{dv}{dm}(x_2,m)}| \cdot |\sum_{a} \frac{\phi(y_a^2) \frac{dv}{dm}(y_a^2)}{JF^j(y_a^2)}|\\
    &\precsim |\sum_{a} \frac{\phi(y_a^1) \frac{dv}{dm}(y_a^1)-\phi(y_a^2) \frac{dv}{dm}(y_a^2)}{JF^j(y_a^1)}|+|\sum_a \frac{\phi(y_a^2) \frac{dv}{dm}(y_a^2)}{JF^j(y_a^1)}(1-\frac{JF^j(y_a^1)}{JF^j(y_a^2)})|\\
    &\quad+  |\sum_{a} \frac{\phi(y_a^2) \frac{dv}{dm}(y_a^2)}{JF^j(y_a^2)} | \cdot \beta^{s(x_1, x_2)},
\end{align*}
where $y_a^1, y_a^2 \in P_a$. Using the distortion (\ref{distor}), $F^{j+R_i-m}P_a=F^{R_i-m}\Delta_{m,i}=\Delta_0$ and $JF^{R_i-m}|_{\Delta_{m,i}}=1$, the inequality above becomes
\begin{align*}
    &\precsim C_{\phi} \cdot C_v \cdot  \sum_{a}\frac{d(y_a^1,y_a^2)}{JF^j(y_a^1)} +C_{\phi} \cdot C_v \cdot \sum_{a} \frac{\beta^{s(x_1, x_2)}}{JF^j(y_a^2)}  \cdot \\
    &\precsim \sum_a C_v \cdot C_{\phi} \cdot \frac{m(P_a)}{m(\Delta_0)} \cdot d(z_1, z_2)\le C_{v} \cdot C_{\phi} \cdot \frac{m(\Delta)}{m(\Delta_0)} \cdot d(z_1,z_2).
\end{align*}

\item $P^j\phi$ is bounded:
\[ |P^j(\phi)(x_1,m)| \le P^j(\textbf{1})(x_1, m) \cdot || \phi||_{\infty} = || \phi||_{\infty}.\]
\end{enumerate}

Therefore, $P^j(\phi)$ is globally Lipschitz, that is, $P^j(\phi)\in C_{\beta}(\Delta)$: for any $z_1, z_2 \in \Delta$,
\[|P^{j}(\phi)(z_1)-P^{j}(\phi)(z_2)| \precsim || \phi||_{\infty} \cdot C_v \cdot C_{\phi} \cdot d(z_1,z_2).\]
The Lipschitz exponents of $P^j(\phi)$, as shown above, are uniformly bounded.
\end{proof}

\section*{Acknowledgments}
The author thanks his Ph.D. advisor Prof. Andrew T\"or\"ok for giving him one of the problems in this paper and University of Houston for a good place to study dynamical systems. The author thanks Prof. Leonid Bunimovich and Prof. Ian Melbourne for helpful comments and the anonymous reviewer for valuable remarks and comments.

%%%%%%%%%%
\section{Appendix}\label{sec7}

\begin{lemma}[Computations of the range of $\gamma$]\label{para}\  \par
The parameter $\gamma$ in Theorems \ref{thm3} and \ref{thm} can be any number in $ (\frac{c}{c+1}+ \frac{2}{(c+1)(2+\epsilon_0)},1) $, where
\[c= \max(\frac{\epsilon_0a+(2+\epsilon_0)(8d+12)}{\epsilon_0(1-a)}, \frac{1-\frac{2}{2+\epsilon_0}+\max(3-\frac{1}{\alpha},0)}{1-\max(3-\frac{1}{\alpha},0)}),\]
\[a= \max(\frac{\epsilon_0+2 \alpha}{(1-\alpha)(2\epsilon_0+2)},\frac{2+2\epsilon_0}{3\epsilon_0+4},\frac{(2+\epsilon_0)(1+\max(3-\frac{1}{\alpha}, 0))-2}{2+2\epsilon_0}),\]
\[\epsilon_0=\min(1, 2-\frac{2\alpha}{1-\alpha}),\]
\[d, \alpha \text{ are the parameters in Theorems }\ref{thm3} \text{ and } \ref{thm}.\]
\end{lemma}
\begin{proof}
From the subsection \ref{sub42}, we know that $\gamma$ can be found by solving (\ref{gamma3})-(\ref{gamma7}), so we summarize them here:

\begin{enumerate}
\item $\min(\kappa,\frac{v}{2}-d\kappa) >1$,  where $\kappa>1, v:=\min(\frac{\gamma(1+c)}{2}-\kappa-c(1-a), \gamma(1+c)-2\kappa-c(1-a)-ca\frac{\alpha}{1-\alpha}, 2\gamma(1+c)-c(1-a)-4\kappa-2ca, \frac{\gamma(1+c)(2+\epsilon_0)}{2}-\kappa(2+\epsilon_0)-c(1-a)-ca\frac{2+\epsilon_0}{2}, \gamma(1+c)-2\kappa-c(1-a)-c\max(3-\frac{1}{\alpha},0) ), a \in (\frac{1}{2}, 1), \epsilon_0 < \min(1, 2-\frac{2\alpha}{1-\alpha}), c>1$.

\item $\gamma \frac{c+1}{2}>1+\frac{c+1}{2}-\min(\kappa, \frac{v}{2}-d \kappa)$.

\item $c-\gamma(c+1)<0$.

\item  $1+(c+1)(\max(3-\frac{1}{\alpha},0)-\gamma)<0$.
\item $\frac{c}{\gamma(c+1)}<1$.
\item $\frac{1}{2}\gamma(c+1)(2+\epsilon)-c(1+\frac{\epsilon}{2})>1, \epsilon < \min(1, 2-\frac{2\alpha}{1-\alpha})$.
\end{enumerate}

If $v> 2(d+1)\kappa$, then $\min (\kappa, \frac{v}{2}-\kappa d)=\kappa$. So we use this to simplify 1,2 above to 1,2 below. Note that 3 and 5 above are the same, the inequality 6 above is $\gamma> \frac{c}{c+1}+ \frac{2}{(c+1)(2+\epsilon)}$ which implies 3 and 5 above: $\gamma> \frac{c}{c+1}$. So we simplify 3,5,6 above to 4 below. Therefore the inequalities above can be rewritten as:

\begin{enumerate}
\item $v> 2(d+1)\kappa$, $\kappa>1$.
\item $\gamma>1-\frac{2}{c+1}(\kappa-1)$.
\item $\gamma> \frac{1}{c+1}+ \max(3-\frac{1}{\alpha},0)$.
\item $\gamma> \frac{c}{c+1}+ \frac{2}{(c+1)(2+\epsilon)}, \epsilon < \min(1, 2-\frac{2\alpha}{1-\alpha})$.

\end{enumerate}

Use the definition of $v$ to change $v> 2(d+1)\kappa$ to 1,2,3,4,5 below, and move 2,3,4 above to 6,7,8 below, then we have
\begin{enumerate}
\item $\frac{\gamma(1+c)}{2}-\kappa-c(1-a)>2(d+1)\kappa,  a \in (\frac{1}{2}, 1), c>1$.
\item $\gamma(1+c)-2\kappa-c(1-a)-ca\frac{\alpha}{1-\alpha}>2(d+1)\kappa$.
\item $2\gamma(1+c)-c(1-a)-4\kappa-2ca>2(d+1)\kappa$.
\item $\frac{\gamma(1+c)(2+\epsilon_0)}{2}-\kappa(2+\epsilon_0)-c(1-a)-ca\frac{2+\epsilon_0}{2}>2(d+1)\kappa$, $\epsilon_0 < \min(1, 2-\frac{2\alpha}{1-\alpha})$.
\item $\gamma(1+c)-2\kappa-c(1-a)-c\max(3-\frac{1}{\alpha},0) )>2(d+1)\kappa$.
\item $\gamma>1-\frac{2}{c+1}(\kappa-1)$.
\item $\gamma> \frac{1}{c+1}+ \max(3-\frac{1}{\alpha},0)$.
\item $\gamma> \frac{c}{c+1}+ \frac{2}{(c+1)(2+\epsilon)}, \epsilon < \min(1, 2-\frac{2\alpha}{1-\alpha})$.

\end{enumerate}

We rewrite the inequalities above to represent the range of $\gamma$:

\begin{enumerate}
\item $\gamma> \frac{(4d+6)\kappa}{c+1}+ \frac{2c}{c+1}(1-a), a \in (\frac{1}{2}, 1), c>1$.
\item $\gamma>\frac{(2d+4)\kappa}{c+1}+\frac{c}{c+1}\frac{a\alpha}{1-\alpha}+\frac{c}{c+1}(1-a)$,
\item $\gamma>\frac{(d+3)\kappa}{c+1}+\frac{c(a+1)}{2(c+1)}$.
\item $\gamma> \frac{2(2d+4+\epsilon_0)}{(1+c)(2+\epsilon_0)}\kappa+\frac{2c+ca\epsilon_0}{(c+1)(2+\epsilon_0)}$, $\epsilon_0 < \min(1, 2-\frac{2\alpha}{1-\alpha})$.
\item $\gamma>\frac{2d+4}{1+c}\kappa+\frac{c(1-a)}{c+1}+\frac{c}{c+1}\max(3-\frac{1}{\alpha},0)$.
\item $\gamma>1-\frac{2}{c+1}(\kappa-1)$.
\item $\gamma> \frac{1}{c+1}+ \max(3-\frac{1}{\alpha},0)$.
\item $\gamma> \frac{c}{c+1}+ \frac{2}{(c+1)(2+\epsilon)}, \epsilon < \min(1, 2-\frac{2\alpha}{1-\alpha})$.

\end{enumerate}

Since $\frac{c}{c+1}<1, \frac{2}{2+\epsilon_0}<1, a<1, \epsilon_0<1$, we can change the range of $\gamma$ 1-5 above to 1-5 below:

\begin{enumerate}
\item $\gamma> \frac{(4d+6)\kappa}{c+1}+ 2(1-a)$.
\item $\gamma>\frac{(2d+4)\kappa}{c+1}+\frac{\alpha}{1-\alpha}+(1-a)$.
\item $\gamma>\frac{(d+3)\kappa}{c+1}+\frac{(a+1)}{2}$.
\item $\gamma> \frac{(2d+5)}{(1+c)}\kappa+\frac{2+a\epsilon_0}{(2+\epsilon_0)}$.
\item $\gamma>\frac{2d+4}{1+c}\kappa+(1-a)+\max(3-\frac{1}{\alpha},0), a \in (\frac{1}{2}, 1), \epsilon_0 < \min(1, 2-\frac{2\alpha}{1-\alpha})$.
\item $\gamma>1-\frac{2}{c+1}(\kappa-1)$.
\item $\gamma> \frac{1}{c+1}+ \max(3-\frac{1}{\alpha},0)$.
\item $\gamma> \frac{c}{c+1}+ \frac{2}{(c+1)(2+\epsilon)}, \epsilon < \min(1, 2-\frac{2\alpha}{1-\alpha})$.

\end{enumerate}

Let $\kappa:=2$, since $2d+5>d+3,\frac{a+1}{2}< \frac{2+\epsilon_0a}{2+\epsilon_0}$, we can simplify inequalities 3,4 above to 3 below. Since $1-\frac{2}{c+1}< \frac{c}{c+1}+ \frac{2}{(c+1)(2+\epsilon)}$, we can simplify the inequalities 6,8 above to 6 below:
\begin{enumerate}
\item $\gamma> \frac{2(4d+6)}{c+1}+ 2(1-a)$.
\item $\gamma>\frac{2(2d+4)}{c+1}+\frac{\alpha}{1-\alpha}+(1-a)$.
\item $\gamma> \frac{2(2d+5)}{(1+c)}+\frac{2+a\epsilon_0}{(2+\epsilon_0)}$.
\item $\gamma>\frac{2(2d+4)}{1+c}+(1-a)+\max(3-\frac{1}{\alpha},0), a \in (\frac{1}{2}, 1), \epsilon_0 < \min(1, 2-\frac{2\alpha}{1-\alpha})$.
\item $\gamma> \frac{1}{c+1}+ \max(3-\frac{1}{\alpha},0)$.
\item $\gamma> \frac{c}{c+1}+ \frac{2}{(c+1)(2+\epsilon)}, \epsilon < \min(1, 2-\frac{2\alpha}{1-\alpha})$.
\end{enumerate}

Note that if $a> \frac{2+2\epsilon_0}{3\epsilon_0+4}$, then $\frac{2+a\epsilon_0}{(2+\epsilon_0)}>2(1-a)$.

If $a > \frac{\epsilon_0+2 \alpha}{(1-\alpha)(2\epsilon_0+2)}$, then $\frac{2+a\epsilon_0}{(2+\epsilon_0)}>\frac{\alpha}{1-\alpha}+1-a$.

If $a>\frac{(2+\epsilon_0)(1+\max(3-\frac{1}{\alpha}, 0))-2}{2+2\epsilon_0}$, then $\frac{2+a\epsilon_0}{(2+\epsilon_0)}>1-a+\max(0,3-\frac{1}{\alpha})$.

Therefore, use $4d+6>\max \{2d+4, 2d+5\}$, when 
\[a> \max(\frac{\epsilon_0+2 \alpha}{(1-\alpha)(2\epsilon_0+2)},\frac{2+2\epsilon_0}{3\epsilon_0+4},\frac{(2+\epsilon_0)(1+\max(3-\frac{1}{\alpha}, 0))-2}{2+2\epsilon_0}), \]
\[\epsilon_0<\min(1, 2-\frac{2\alpha}{1-\alpha}),\]
we can simplify inequalities 1,2,3,4 above to 1 below and move 5,6 above to 2,3 below:

\begin{enumerate}
\item $\gamma> \frac{2(4d+6)}{(1+c)}+\frac{2+a\epsilon_0}{(2+\epsilon_0)}$.
\item $\gamma> \frac{1}{c+1}+ \max(3-\frac{1}{\alpha},0)$.
\item $\gamma> \frac{c}{c+1}+ \frac{2}{(c+1)(2+\epsilon)}, \epsilon < \min(1, 2-\frac{2\alpha}{1-\alpha})$.
\end{enumerate}

Note that if $c>\frac{1-\frac{2}{2+\epsilon}+\max(3-\frac{1}{\alpha},0)}{1-\max(3-\frac{1}{\alpha},0)}, \epsilon < \min(1, 2-\frac{2\alpha}{1-\alpha})$, then
\[\frac{c}{c+1}+ \frac{2}{(c+1)(2+\epsilon)}> \frac{1}{c+1}+ \max(3-\frac{1}{\alpha},0).\]

If $c>\frac{2+\epsilon_0a+(2+\epsilon_0)(8d+12)-\frac{2(2+\epsilon_0)}{2+\epsilon}}{\epsilon_0(1-a)}$, then $ \frac{c}{c+1}+ \frac{2}{(c+1)(2+\epsilon)}>\frac{2(4d+6)}{(1+c)}+\frac{2+a\epsilon_0}{(2+\epsilon_0)}$.

Therefore, if 
\[c> \max(\frac{2+\epsilon_0a+(2+\epsilon_0)(8d+12)-\frac{2(2+\epsilon_0)}{2+\epsilon}}{\epsilon_0(1-a)}, \frac{1-\frac{2}{2+\epsilon}+\max(3-\frac{1}{\alpha},0)}{1-\max(3-\frac{1}{\alpha},0)}),\]
\[a> \max(\frac{\epsilon_0+2 \alpha}{(1-\alpha)(2\epsilon_0+2)},\frac{2+2\epsilon_0}{3\epsilon_0+4},\frac{(2+\epsilon_0)(1+\max(3-\frac{1}{\alpha}, 0))-2}{2+2\epsilon_0}), \]
\[\epsilon_0<\min(1, 2-\frac{2\alpha}{1-\alpha}),\]
the inequalities 1,2,3 above can be combined as follows:
\[\gamma> \frac{c}{c+1}+ \frac{2}{(c+1)(2+\epsilon)}.\]

Let $\epsilon_0=\epsilon$, then
\[\gamma> \frac{c}{c+1}+ \frac{2}{(c+1)(2+\epsilon_0)}.\]

Since this is a strict inequality for $\gamma$, so $c, a, \epsilon_0$ can take the supremums or infimums, respectively, that is,
\[c= \max(\frac{2+\epsilon_0a+(2+\epsilon_0)(8d+12)-2}{\epsilon_0(1-a)}, \frac{1-\frac{2}{2+\epsilon_0}+\max(3-\frac{1}{\alpha},0)}{1-\max(3-\frac{1}{\alpha},0)}),\]
\[a= \max(\frac{\epsilon_0+2 \alpha}{(1-\alpha)(2\epsilon_0+2)},\frac{2+2\epsilon_0}{3\epsilon_0+4},\frac{(2+\epsilon_0)(1+\max(3-\frac{1}{\alpha}, 0))-2}{2+2\epsilon_0}),\]
\[\epsilon_0=\min(1, 2-\frac{2\alpha}{1-\alpha}).\]
\end{proof}

\begin{lemma}[Computations of the range of $\gamma_1$]\label{para1}\  \par
From the proof of Corollary \ref{cor1}, $\gamma_1$ satisfies the following inequalities:
\begin{enumerate}
     \item $\frac{\gamma_1}{2}>1-a$.
     \item $\gamma_1> 1-a+a \frac{\alpha}{1-\alpha}$.
     \item $2\gamma_1> 1+a$.
     \item $\gamma_1> ( 1+a \frac{\epsilon_0}{2}) \cdot \frac{2}{2+\epsilon_0}, \epsilon_0< \min(1, 2-\frac{2\alpha}{1-\alpha})$.
     \item $\gamma_1> 1-a+\max(0,3-\frac{1}{\alpha})$.
 \end{enumerate}

Then $\gamma_1$ can be any number in $(\frac{2+a\epsilon_0}{2+\epsilon_0},1)$, where 
\[a= \max(\frac{\epsilon_0+(2+\epsilon_0)\max(0, 3-\frac{1}{\alpha})}{2+2\epsilon_0}, \frac{\frac{\epsilon_0}{2+\epsilon_0}}{\frac{\epsilon_0}{2+\epsilon_0}+\frac{1-2\alpha}{1-\alpha}}, \frac{2+2\epsilon_0}{4+5\epsilon_0}), \epsilon_0= \min(1, 2-\frac{2\alpha}{1-\alpha}).\]

\end{lemma}

\begin{proof}
Since $\frac{1+a}{2}<\frac{2+a\epsilon_0}{2+\epsilon_0}$, then $\gamma_1> ( 1+a \frac{\epsilon_0}{2}) \cdot \frac{2}{2+\epsilon_0}> \frac{1+a}{2}$.

 If $a> \frac{2+2\epsilon_0}{4+5\epsilon_0}$, then $( 1+a \frac{\epsilon_0}{2})\cdot \frac{2}{2+\epsilon_0}>2(1-a)$.

 If $ a>\frac{\frac{\epsilon_0}{2+\epsilon_0}}{\frac{\epsilon_0}{2+\epsilon_0}+\frac{1-2\alpha}{1-\alpha}} $, then $( 1+a \frac{\epsilon_0}{2})\cdot  \frac{2}{2+\epsilon_0}>1-a+a\frac{\alpha}{1-\alpha}$.

 If $a>\frac{\epsilon_0+(2+\epsilon_0)\max(0, 3-\frac{1}{\alpha})}{2+2\epsilon_0}$, then $( 1+a \frac{\epsilon_0}{2})\cdot \frac{2}{2+\epsilon_0}>1-a+\max(0,3-\frac{1}{\alpha})$.

So when $a> \max(\frac{\epsilon_0+(2+\epsilon_0)\max(0, 3-\frac{1}{\alpha})}{2+2\epsilon_0}, \frac{\frac{\epsilon_0}{2+\epsilon_0}}{\frac{\epsilon_0}{2+\epsilon_0}+\frac{1-2\alpha}{1-\alpha}}, \frac{2+2\epsilon_0}{4+5\epsilon_0})$ and $\epsilon_0< \min(1, 2-\frac{2\alpha}{1-\alpha})$,
\[\gamma_1>\frac{2+a\epsilon_0}{2+\epsilon_0}.\]

Since this is a strict inequality for $\gamma_1$, so $a, \epsilon_0$ can take the supremums or infimums, respectively, that is, $\gamma_1$ can be any number in $(\frac{2+a\epsilon_0}{2+\epsilon_0},1)$, where 
\[a= \max(\frac{\epsilon_0+(2+\epsilon_0)\max(0, 3-\frac{1}{\alpha})}{2+2\epsilon_0}, \frac{\frac{\epsilon_0}{2+\epsilon_0}}{\frac{\epsilon_0}{2+\epsilon_0}+\frac{1-2\alpha}{1-\alpha}}, \frac{2+2\epsilon_0}{4+5\epsilon_0}), \epsilon_0= \min(1, 2-\frac{2\alpha}{1-\alpha}).\]
\end{proof}

\begin{lemma}[Transfer, see \cite{ka} Theorem 6.10]\label{transfer}\ \par

For any measurable space $S$ and Borel space $T$, let $\xi \stackrel{d}{=} \xi'$ and $\eta$ be random elements in $S$ and $T$, respectively (that is, $\xi$ and $\eta$ are defined on the same probability space, $\xi$ and $\xi'$ have the same distribution but are not necessarily defined on the same probability space). Then there exists a random element $\eta'$ in $T$ with 
\[(\eta, \xi) =_d (\eta', \xi').\]

More precisely, there exists a measurable function $f: S \times [0,1] \to T$ such that $\eta'=f(\xi', U)$ where $U \sim U(0,1)$ and $\xi'$ are independent. 

Indeed, to guarantee the independence above, we can simply extend the probability space by multiplying an interval $(I, \operatorname*{Leb})$.

\end{lemma}

\begin{lemma}[Embedding in a $d$-dimensional Brownian motion]\label{stationary}\  \par
 If $(\phi_k \circ T^k)_{k \ge 1} $ satisfies the VASIP, and there is a constant $\epsilon \in (0,1/2)$ and a positive definite $d \times d$ matrix $\Sigma^2$ such that $ \sigma_n^2=n \cdot \Sigma^2 + o(n^{1-\epsilon}) $, then there is a constant $\bar{\epsilon} \in (0,1/2)$ and a standard $d$-dimensional Brownian motion $B_t$ such that
\[\sum_{k=1}^{n}\phi_k \circ T^k-\Sigma \cdot B_n=o(n^{1/2-\bar{\epsilon}}) \text{ a.s.}\]
\end{lemma}

\begin{proof}
Since $d=1$ is trivial, we assume that $d>1$. By the Definition \ref{VASIP}, we have:
\[\sum_{k=1}^{n} \phi_k \circ T^k - \sum_{k=1}^{n} G_k=o(n^{1/2-\epsilon}) \text{ a.s., }\]
\[ \sigma_n^2 = \int(\sum_{k=1}^{n} \phi_k \circ T^k )\cdot(\sum_{k=1}^{n} \phi_k \circ T^k )^{T} d\mu=\sum_{k=1}^{n} \tilde{\mathbb{E}}({ G_k \cdot G_k^{T}})+ o(n^{1-\epsilon}), \]
where $\tilde{\mathbb{E}}$ is the expectation of the probability $\tilde{P}$ of the extended probability space $(X, \mathcal{B}, \mu)$. Then
\[\sum_{k\le n} \tilde{\mathbb{E}}({ G_k \cdot G_k^{T}})=n\cdot \Sigma^2+o(n^{1-\epsilon}).\]

Without loss of generality, we assume that $\Sigma^2=I_{d \times d }$.

Let $c \in \mathbb{N}$ (will be given later), then
\begin{equation} \label{extendvariance}
   \sum_{k=n^{c}+1}^{ (1+n)^{c}} \tilde{\mathbb{E}}({ G_k \cdot G_k^{T}})=[(1+n)^{c}-n^{c}]\cdot I_{d \times d }+o((n+1)^{c(1-\epsilon)}).
\end{equation}

If $c$ is large enough such that $c-1>c(1-\epsilon)$, then
\[\frac{\sum_{k=n^{c}+1}^{(1+n)^{c}} \tilde{\mathbb{E}}({ G_k \cdot G_k^{T}})}{(1+n)^{c}-n^{c}}- I_{d \times d }=\frac{o((n+1)^{c(1-\epsilon)})}{n^{c-1}}=o(n^{1-c \epsilon}).\]

We denote
\begin{equation}
A:=\sum_{k=n^{c}+1}^{(1+n)^{c}} \tilde{\mathbb{E}}({ G_k \cdot G_k^{T}})
=Q_n \cdot \begin{bmatrix}
\lambda^n_1  &  0  & \cdots\ &0\\
0  &  \lambda^n_2  & \cdots\ & 0\\
 \vdots   & \vdots & \ddots  & \vdots  \\
 0 & 0  & \cdots\ & \lambda^n_d \\
\end{bmatrix} \cdot Q^T_n,
\end{equation}
where $ \lambda^n_1 \le \lambda^n_2 \le \cdots \le \lambda^n_d $ are eigenvalues, $Q_n $ is an orthogonal matrix, and denote \[A_1:=Q_n \cdot \begin{bmatrix}
\min (\lambda^n_1,(1+n)^{c}-n^{c})  &  0  & \cdots\ &0\\
0  &  \min (\lambda^n_2,(1+n)^{c}-n^{c})  & \cdots\ & 0\\
 \vdots   & \vdots & \ddots  & \vdots  \\
 0 & 0  & \cdots\ & \min (\lambda^n_d,(1+n)^{c}-n^{c})\\
\end{bmatrix} \cdot Q^T_n,\]
\[ A_2:=A-A_1,\]
\[A_3:=((1+n)^{c}-n^{c})\cdot  I_{d \times d}-A_1.\]

For each $n$, pick arbitrary independent Gaussian vectors $\bar{g}^{n+1}_1, \bar{g}^{n+1}_2, \bar{g}^{n+1}_3$ such that
\[\tilde{\mathbb{E}}[\bar{g}^{n+1}_1 \cdot (\bar{g}^{n+1}_1)^T]=A_1, \text{ } \tilde{\mathbb{E}}[\bar{g}^{n+1}_2 \cdot (\bar{g}^{n+1}_2)^T]=A_2, \text{ } \tilde{\mathbb{E}}[\bar{g}^{n+1}_3 \cdot (\bar{g}^{n+1}_3)^T]=A_3.\]

Therefore $\bar{g}^{n+1}_1+ \bar{g}^{n+1}_2 \stackrel{d}{=} \sum_{k=n^{c}+1}^{(1+n)^{c}} G_k$. By Lemma \ref{transfer}, there are zero-mean independent Gaussian vectors $g^{n+1}_1, g^{n+1}_2, g^{n+1}_3$ (extend the probability space if necessary, still denote its probability by $\tilde{P}$ and its expectation by $\tilde{E}$) such that
\[(\bar{g}^{n+1}_1+\bar{g}^{n+1}_2, \bar{g}^{n+1}_1, \bar{g}^{n+1}_2, \bar{g}^{n+1}_3 )\stackrel{d}{=}(\sum_{k=n^{c}+1}^{(1+n)^{c}} G_k, g^{n+1}_1, g^{n+1}_2, g^{n+1}_3).\]

Therefore,
\[\tilde{\mathbb{E}}[{g}^{n+1}_1 \cdot ({g}^{n+1}_1)^T]=A_1, \tilde{\mathbb{E}}[{g}^{n+1}_2 \cdot ({g}^{n+1}_2)^T]=A_2, \tilde{\mathbb{E}}[{g}^{n+1}_3 \cdot ({g}^{n+1}_3)^T]=A_3,\]
\[\sum_{k=n^{c}+1}^{(1+n)^{c}} G_k = g^{n+1}_1+g^{n+1}_2 \text{ a.s.,}\]
\[\tilde{\mathbb{E}}[(g^{n+1}_1+g^{n+1}_3)\cdot (g^{n+1}_1+g^{n+1}_3)^T]=[(1+n)^{c}-n^{c}] \cdot I_{d \times d}.\]

Furthermore, since $A_2 \text{ and } A_3$, after being diagonalized by $Q_n$, have nonzero numbers on disjoint entries of the diagonal line. Therefore by (\ref{extendvariance}),
\[\tilde{\mathbb{E}}[g^{n+1}_2\cdot (g_2^{n+1})^T]=o((n+1)^{c(1-\epsilon)}),\] \[\tilde{\mathbb{E}}[g^{n+1}_3 \cdot (g_3^{n+1})^T]=o((n+1)^{c(1-\epsilon)}).\]

By Lemma \ref{transfer}, we know $g_i^n$ depends on $\sum_{n^{c}+1}^{ (1+n)^{c}} G_k$. Since for any $n_1 \neq n_2 \in \mathbb{N} $, $\sum_{k=n_1^{c}+1}^{(1+n_1)^{c}} G_k$ and $\sum_{k=n_2^{c}+1}^{(1+n_2)^{c}} G_k$ are independent, then
\[(g^{n_1+1}_1, g^{n_1+1}_2, g^{n_1+1}_3) \text{ and } (g^{n_2+1}_1, g^{n_2+1}_2, g^{n_2+1}_3) \text{ are independent}.\]

So there is a Brownian motion $B_t$ such that for each $n \in \mathbb{N}$,
\[g^{n+1}_1+g^{n+1}_3=B_{(1+n)^{c}}-B_{n^{c}}. \]

Therefore
\[\sum_{k=n^{c}+1}^{(1+n)^{c}} G_k -(B_{(1+n)^{c}}-B_{n^{c}})=g^{n+1}_2-g^{n+1}_3 \text{ a.s.,}\]
\[\sum_{ k=1}^{ n^{c}} G_k -B_{n^{c}}=\sum_{i=1}^{ n}(g^i_2-g^i_3) \text{ a.s.}\]

For any $m \in \mathbb{N}$, there is $n$ such that $n^c < m \le (n+1)^c$ and
\begin{align*}
    \sum_{ k=1}^{ m} G_k -B_{m}&=\sum_{ k=1}^{n^{c}} G_k -B_{n^{c}} + \sum_{k=1+n^{c}}^{ m } G_k-(B_m-B_{n^c})\\
    &=\sum_{i=1}^{n}(g^i_2-g^i_3)+\sum_{ k=n^{c}+1}^{m } G_k-(B_m-B_{n^c})\\
    &\le |\sum_{i=1}^{n}(g^i_2-g^i_3)|+  \sup_{n^c < m\le (n+1)^c} |\sum_{k=1+n^{c}}^{m } G_k|+\sup_{n^c < m \le (n+1)^c} |B_m-B_{n^c}|.
\end{align*}
To estimate the last two terms, without loss of generality, we assume that these two terms are Gaussian random variables. Then if $2c \bar{\epsilon}<1, \bar{\epsilon} < \epsilon $,
\begin{align*}
     &\tilde{P}(\sup_{n^c < m \le (n+1)^c} |\sum_{  k=1+n^{c}}^{m } G_k|>n^{c(1/2-\bar\epsilon)}) \le  \tilde{P}( | \sum_{k=1+n^c}^{ (n+1)^c}G_k|>n^{c(1/2-\bar\epsilon)})\\
     &\precsim \int_{|t|>\frac{n ^{c(1/2-\bar{\epsilon})}}{ \sqrt{\tilde{\mathbb{E}}[(\sum_{k=1+n^c}^{ (n+1)^c}G_k)^2]}  }}e^{-t^2/2}dt\precsim \int_{|t|>\frac{n ^{c(1/2-\bar{\epsilon})}}{n^{(c-1)/2}}}e^{-t^2/2}dt \precsim n^{-2},
\end{align*}
\begin{align*}
    \tilde{P}(\sup_{n^c < m \le (n+1)^c} |B_m-B_{n^c}|>n^{c(1/2-\bar\epsilon)})&\le  \tilde{P}(|B_{(n+1)^c}-B_{n^c}|>n^{c(1/2-\bar\epsilon)})\\
    &\precsim \int_{|t|>\frac{n ^{c(1/2-\bar{\epsilon})}}{ \sqrt{ (n+1)^c-n^c}  }}e^{-t^2/2}dt \precsim n^{-2}. 
\end{align*}

The estimates of $g^i_2$ and $g^i_3$ are the same, so we just estimate $g^i_2$:
\begin{align*}
     \tilde{P}(|\sum_{i=1}^{ n}g^i_2|>n^{c(1/2-\bar\epsilon)})&\precsim \int_{|t|>\frac{n ^{c(1/2-\bar{\epsilon})}}{ \sqrt{ \tilde{\mathbb{E}}[(\sum_{i=1}^{ n}g^i_2)^2]}  }}e^{-t^2/2}dt\precsim \int_{|t|>\frac{n ^{c(1/2-\bar{\epsilon})}}{n^{[1+c(1-\epsilon)]/2}}}e^{-t^2/2}dt \precsim n^{-2}.
\end{align*}

By the Borel-Cantelli Lemma,
\begin{align*}
   \sum_{ k=1}^{m} G_k -B_{m}&\le |\sum_{i=1}^{ n}(g^i_2-g^i_3)|+  \sup_{n^c < m \le (n+1)^c} |\sum_{  k=n^{c}+1}^{ m } G_k| +\sup_{n^c < m \le (n+1)^c} |B_m-B_{n^c}|\\
   &=o(n^{c(1/2-\bar{\epsilon})})=o(m^{1/2-\bar{\epsilon}}) \text{ a.s.} \tag*{\qedhere}
\end{align*}
\end{proof}

%%%%%%%%%%
%
\bibliographystyle{amsalpha}%
\bibliography{bibfile}
%%%%%%%%%%%%%%%%%%%%%%%%%%%%%%%%%%%%%%%%%%%%%%%

\end{document}